\newtheorem{theorem}{Theorem}[section]
\newtheorem{proposition}[theorem]{Proposition}
\newtheorem{lemma}[theorem]{Lemma}
\newtheorem{corollary}[theorem]{Corollary}
\theoremstyle{definition}
\newtheorem{definition}[theorem]{Definition}
\newtheorem{fact}[theorem]{Fact}
\newtheorem{question}[theorem]{Question}
\newtheorem{example}[theorem]{Example}
\newtheorem{exercise}[theorem]{Exercise}
\newcommand{\abar}{\bar{a}}
\newcommand{\bbar}{\bar{b}}
\newcommand{\cbar}{\bar{c}}
\newcommand{\dbar}{\bar{d}}
\newcommand{\ibar}{\bar{\imath}}
\newcommand{\jbar}{\bar{\jmath}}
\newcommand{\kbar}{\bar{k}}
\newcommand{\ubar}{\bar{u}}
\newcommand{\vbar}{\bar{v}}
\newcommand{\ybar}{\bar{y}}
\newcommand{\xbar}{\bar{x}}
\newcommand{\cI}{\mathcal{I}}
\newcommand{\cL}{\mathcal{L}}
\newcommand{\cU}{\mathcal{U}}
\newcommand{\seq}{\subseteq}
\newcommand{\vphi}{\varphi}
\def\M{\mathbb{M}}
\def\U{\mathbb{U}}
\def\acl{\operatorname{acl}}
\def\dcl{\operatorname{dcl}}
\def\dom{\operatorname{dom}}
\newcommand{\claim}{$_{\sslash}$}
\def\NSOP{\operatorname{NSOP}}
\def\NTP{\operatorname{NTP}}
\def\SOP{\operatorname{SOP}}
\def\Th{\operatorname{Th}}
\def\tp{\operatorname{tp}}
\def\TP{\operatorname{TP}}
\def\IS{\operatorname{Ind}}
\def\EM{\operatorname{EM}}
\def\Ind{\setbox0=\hbox{$x$}\kern\wd0\hbox to 0pt{\hss$\mid$\hss}
\lower.9\ht0\hbox to 0pt{\hss$\smile$\hss}\kern\wd0}
\def\Notind{\setbox0=\hbox{$x$}\kern\wd0\hbox to 0pt{\mathchardef
\nn=12854\hss$\nn$\kern1.4\wd0\hss}\hbox to
0pt{\hss$\mid$\hss}\lower.9\ht0 \hbox to 0pt{\hss$\smile$\hss}\kern\wd0}
\def\ind{\mathop{\mathpalette\Ind{}}}
\def\nind{\mathop{\mathpalette\Notind{}}}
\newcommand{\dotminus}{ 
\!\!\buildrel\textstyle~.\over{\hbox{ 
\vrule height3pt depth0pt width0pt}{\smash-} 
}}
\begin{document}

\title{Model theoretic properties of the Urysohn sphere}

\author{Gabriel Conant}

\address{Department of Mathematics, Statistics and Computer Science\\
University of Illinois at Chicago\\
Chicago, IL, 60607, USA}

\email{gconan2@uic.edu}


\author{Caroline Terry}

\address{Department of Mathematics, Statistics and Computer Science\\
University of Illinois at Chicago\\
Chicago, IL, 60607, USA}

\email{cterry3@uic.edu}

\begin{abstract}
We characterize model theoretic properties of the Urysohn sphere as a metric structure in continuous logic. In particular, our first main result shows that the theory of the Urysohn sphere is $\SOP_n$ for all $n\geq 3$, but does not have the fully finite strong order property. Our second main result is a geometric characterization of dividing independence in the theory of the Urysohn sphere. We further show that this characterization satisfies the extension axiom, and so forking and dividing are the same for complete types. Our results require continuous analogs of several tools and notions in classification theory. While many of these results are undoubtedly known to researchers in the field, they have not previously appeared in publication. Therefore, we include a full exposition of these results for general continuous theories. 
\end{abstract}

\maketitle

\section{Introduction}

The Urysohn sphere is the unique complete separable metric space of diameter 1, which is ultrahomogeneous and isometrically embeds every separable metric space of diameter $\leq 1$. As a metric space, the Urysohn sphere is an important example in descriptive set theory, infinitary Ramsey theory, and topological dynamics of automorphism groups. As a model theoretic structure, the Urysohn sphere is most naturally studied by way of continuous logic. Indeed, the theory of the Urysohn sphere can be considered as the model completion of the ``empty theory" in the continuous language containing only a predicate for the metric. As such, the Urysohn sphere is often used as a fundamental example of the kind of structure well-suited for study in continuous logic. Previous work on the model theory of the Urysohn sphere can be found in \cite{EaGo}, \cite{GoUS}, and \cite{Usvy}.

In \cite{EaGo}, Goldbring and Ealy characterize thorn-forking in the Urysohn sphere and show that the theory is rosy (with respect to finitary imaginaries). They also include an argument, due to Pillay, that the Urysohn sphere is not simple, and mention unpublished observations of Berenstein and Usvyatsov that the Urysohn sphere is $\SOP_3$, but without the strict order property. Altogether, this previous work motivates the following questions, which we answer in this paper.
\begin{enumerate}
\item How complicated is the theory of the Urysohn sphere with respect to commonly considered model theoretic dividing lines?
\item What is the nature of forking independence in this theory?
\end{enumerate}
In particular, our first main result shows that the Urysohn sphere has the $n$-strong order property $(\SOP_n)$ for all $n\geq 3$, but does not have the fully finite strong order property. This significantly strengthens the previously known classification. Moreover, answering a question of Starchenko, we show that the Urysohn sphere has the tree property of the second kind. In our second main result, we characterize forking and dividing for complete types in terms of basic distance calculations on metric spaces. A corollary of this characterization is that forking and dividing are the same for complete types, which indicates some good behavior of nonforking despite the model theoretic complexity of the theory. These results answer questions posed by Goldbring and Starchenko. 

The previous results require an understanding of several notions concerning classification theory and forking and dividing. Therefore, this paper also serves to organize and verify continuous versions of several standard facts in this area. 

First, we formulate continuous definitions of Shelah's strong order properties, and prove equivalent versions using amalgamation of indiscernible sequences. We then formulate definitions of forking and dividing for continuous logic, which are directly adapted from the classical ``syntactic" definitions. We also state equivalent versions of these definitions, which are familiar from sources in both classical and continuous model theory. In \ref{appMT}, we adapt the classical proofs of these equivalences to continuous logic. In particular, we define dividing via $k$-inconsistency of infinite sequences and prove the equivalence of this definition with the more common version, found in \cite{BBHU}, which uses inconsistency of indiscernible sequences. We also give a syntactic definition of forking resulting from a continuous version of ``implying a disjunction of dividing formulas", and prove its equivalence with the definition motivated by the existence of nonforking extensions.

A common tool in the previous results is the ability to assume indiscernibility for infinite sequences, which witness certain model theoretic behavior. In classical logic, this is done by taking indiscernible realizations of the \textit{Ehrenfeucht-Mostowski type}. The existence of such realizations is a standard application of Ramsey's Theorem. In \ref{appMT}, we define the $\EM$-type for continuous logic, and prove that Ramsey's Theorem can be used to obtain indiscernible sequences realizing these types.  

The outline of the paper is as follows. In Section \ref{sec:MT} we translate Shelah's $\SOP_n$-hierarchy to continuous logic, and prove that these dividing lines are detected by \textit{$n$-cyclic} indiscernible sequences, i.e. indiscernible sequences whose $2$-type can be consistently amalgamated in an $n$-cycle (see Definition \ref{def:SOP}). We then define forking and dividing and prove, for continuous logic, a standard fact that the equivalence of these notions is witnessed by the extension axiom for nondividing. Section \ref{sec:main} contains our main results concerning the the theory of the Urysohn sphere, denoted $\cU$. We first define $\cU$, and recall the result, due to Henson, that $\Th(\cU)$ is separably categorical and eliminates quantifiers. We then give a classification of the model theoretic complexity of the Urysohn sphere. In particular, we show that $\Th(\cU)$ is $\SOP_n$ for all $n\geq 3$, but does not have the fully finite strong order property. We also prove that $\Th(\cU)$ has $\TP_2$. We then turn our attention to nonforking independence. We first give a combinatorial characterization of dividing, which is formulated from basic distance calculations in metric spaces. We then show that this characterization satisfies the extension axiom, and so forking and dividing are the same for complete types. Using the characterization of nonforking, we show that a type in $\Th(\cU)$ is stationary if and only if it is algebraic. In Section \ref{sec:Qs}, we discuss further remarks and questions. Specifically, we observe a relationship between our characterization of nonforking in the Urysohn sphere and the stationary independence relation given by free amalgamation of metric spaces, which was used by Tent and Ziegler to prove the that the isometry group of the Urysohn space is (algebraically) simple. This motivates several open questions (in both discrete and continuous logic) on the relationship between stationary independence relations and the equivalence of forking and dividing for complete types. 
 
\subsection*{Acknowledgments} 

We would like to thank Isaac Goldbring for introducing this project to us, and for guidance throughout the development of our results. We also thank Dave Marker for many helpful conversations.

\section{Continuous Model Theory}\label{sec:MT}

We assume the reader is familiar with the basic setting of continuous logic and model theory for bounded metric structures. An in-depth introduction can be found in \cite{BBHU}. Throughout this section, $T$ denotes a complete theory in continuous logic, and $\M$ is a sufficiently saturated monster model of $T$. The letters $A,B,C,\ldots$ denote sets, and we write $A\subset\M$ to mean $A\seq\M$ and $\M$ is $\chi(A)^+$-saturated, where $\chi(A)$ is the density character of $A$. We will use $\abar,\bbar,\cbar,\ldots$ to denote tuples of elements, and $a,b,c,\ldots$ to denote singletons. We use $\ell(\abar)$ to denote the length of a tuple, which may be infinite.

Recall that types in continuous logic consist of conditions of the form ``$\vphi(\xbar)=0$", where $\vphi(\xbar)$ is some formula. Given $\epsilon>0$, we use $``\vphi(\xbar)\leq\epsilon$" and $``\vphi(\xbar)\geq\epsilon"$ to denote, respectively, the conditions ``$\vphi(\xbar)\dotminus\epsilon=0$" and $``\epsilon\dotminus\vphi(\xbar)=0"$, where, given $r,s\in[0,1]$, $r\dotminus s=\max\{0,r-s\}$. We use ``$\vphi(\xbar)=\epsilon$" to denote the condition ``$|\vphi(\xbar)-\epsilon|=0$".

We will exclusively study metric structures of diameter $\leq1$. Therefore, throughout this paper, when carrying out calculations with distances we use addition truncated at 1. We also adopt the convention that $\sup\emptyset=0$ and $\inf\emptyset=1$. 

\subsection{Classification Theory}

In this section, we define the continuous analogs of several ``dividing lines" in Shelah's classification hierarchy. The translation of these properties to continuous logic is an ongoing process. For example, stability is discussed in \cite{BBHU} and definitions of the independence property, the tree property of the second kind, and the strict order property can be found in \cite{BYrandom}. Our focus will be on the strong order property, which was first defined for classical logic in \cite{Sh500}. Following the style of \cite{BYrandom} and \cite{BBHU}, we give syntactic definitions of various strong order properties, which are obtained from the definitions in \cite{Sh500} via a standard transfer of discrete connectives to continuous ones (e.g. $\max$ in place of conjunction).

\begin{definition} $~$ 
\begin{enumerate}
\item Given $n\geq 3$, $T$ has the \textbf{$n$-strong order property}, $\SOP_n$, if there is a formula $\vphi(\xbar,\ybar)$, an $\epsilon>0$, and a sequence $(\abar^l)_{l<\omega}$ in $\M$, such that $\vphi(\abar^l,\abar^m)=0$ for all $l<m$ and
$$
\inf_{\xbar^1,\ldots,\xbar^n}\max\{\vphi(\xbar^1,\xbar^2),\ldots,\vphi(\xbar^{n-1},\xbar^n),\vphi(\xbar^n,\xbar^1)\}\geq\epsilon.
$$
\item $T$ has the \textbf{fully finite strong order property}, $\SOP_\infty$, if there is a formula $\vphi(\xbar,\ybar)$, an $\epsilon>0$, and a sequence $(\abar^l)_{l<\omega}$ in $\M$, such that $\vphi(\abar^l,\abar^m)=0$ for all $l<m$ and for all $n$
$$
\inf_{\xbar^1,\ldots,\xbar^n}\max\{\vphi(\xbar^1,\xbar^2),\ldots,\vphi(\xbar^{n-1},\xbar^n),\vphi(\xbar^n,\xbar^1)\}\geq\epsilon.
$$
\end{enumerate}
\end{definition}

When working with such notions, it is often convenient to obtain some level of indiscernibility in the definitions. For $\SOP_n$, we have the following result.

\begin{definition}\label{def:SOP}
Suppose $(\abar^l)_{l<\omega}$ is an indiscernible sequence and $p(\xbar,\ybar)=\tp(\abar^0,\abar^1)$. Given $n\geq 1$, $(\abar^l)_{l<\omega}$ is \textbf{$n$-cyclic} if
$$
p(\xbar^1,\xbar^2)\cup p(\xbar^2,\xbar^3)\cup\ldots\cup p(\xbar^{n-1},\xbar^n)\cup p(\xbar^n,\xbar^1)
$$
is satisfiable.
\end{definition}

\begin{proposition}\label{SOP}
$~$
\begin{enumerate}[$(a)$]
\item Given $n\geq 3$, $T$ has $\SOP_n$ if and only if there is an indiscernible sequence in $\M$ that is not $n$-cyclic.
\item If $T$ has $\SOP_\infty$ then there is an indiscernible sequence of finite tuples in $\M$ that is not $n$-cyclic for any $n\geq 1$.
\end{enumerate}
\end{proposition}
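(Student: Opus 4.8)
The plan is to prove both parts by linking the infimum quantity in the definition of $\SOP_n$ to the (un)satisfiability of the cyclic type, using indiscernible sequences realizing the $\EM$-type as the bridge. The two implications of $(a)$ have different flavors, so I would treat them separately and then observe that $(b)$ is simply a uniform version of the easier direction.

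For the backward direction of $(a)$, suppose $(\abar^l)_{l<\omega}$ is indiscernible and not $n$-cyclic, and set $p=\tp(\abar^0,\abar^1)$. Writing $p$ as a set of conditions $\{\psi(\xbar,\ybar)=0 : \psi\in\Psi\}$ (for instance taking $\Psi=\{|\chi-\chi^p| : \chi\text{ a formula}\}$), the failure of $n$-cyclicity says that the cyclic union $\bigcup_{i=1}^{n} p(\xbar^i,\xbar^{i+1})$ (indices mod $n$) is unsatisfiable in $\M$. I would then invoke the continuous approximate-satisfiability criterion: since $\M$ is saturated, a set of zero-conditions is satisfiable iff every finite subset is approximately satisfiable. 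Unsatisfiability therefore yields finitely many $\psi_1,\ldots,\psi_k\in\Psi$ and a $\delta>0$ such that no tuple makes $\max_{i,j}\psi_j(\xbar^i,\xbar^{i+1})\leq\delta$. Setting $\vphi=\max_{j\leq k}\psi_j$ and $\epsilon=\delta$, one gets $\inf_{\xbar^1,\ldots,\xbar^n}\max_i\vphi(\xbar^i,\xbar^{i+1})\geq\epsilon$, while $\vphi(\abar^0,\abar^1)=0$ gives $\vphi(\abar^l,\abar^m)=0$ for all $l<m$ by indiscernibility. This is exactly $\SOP_n$, witnessed by $\vphi$, $\epsilon$, and $(\abar^l)_{l<\omega}$.

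For the forward direction of $(a)$, suppose $\SOP_n$ holds via $\vphi$, $\epsilon$, and $(\abar^l)_{l<\omega}$. Using Ramsey's theorem for continuous logic and the $\EM$-type machinery, I would extract an indiscernible sequence $(\bbar^l)_{l<\omega}$ realizing the $\EM$-type of $(\abar^l)_{l<\omega}$; since $\vphi$ takes the constant value $0$ on all increasing pairs of the original sequence, the $\EM$-type forces $\vphi(\bbar^0,\bbar^1)=0$, hence $\vphi(\bbar^l,\bbar^m)=0$ for $l<m$. The crucial observation is that $\inf_{\xbar^1,\ldots,\xbar^n}\max_i\vphi(\xbar^i,\xbar^{i+1})\geq\epsilon$ is a closed condition on $T$ that does not mention the sequence, so it persists for $(\bbar^l)_{l<\omega}$. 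If $(\bbar^l)_{l<\omega}$ were $n$-cyclic, then a realization $\cbar^1,\ldots,\cbar^n$ of the cyclic type would satisfy $\vphi(\cbar^i,\cbar^{i+1})=0$ for every $i$, forcing the infimum to be $0<\epsilon$, a contradiction; hence $(\bbar^l)_{l<\omega}$ is not $n$-cyclic.

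Part $(b)$ follows the forward direction uniformly: extracting an indiscernible sequence $(\bbar^l)_{l<\omega}$ of finite tuples realizing the $\EM$-type of an $\SOP_\infty$-witness, the condition $\inf_{\xbar^1,\ldots,\xbar^n}\max_i\vphi(\xbar^i,\xbar^{i+1})\geq\epsilon$ now holds for every $n$, so the contradiction above rules out $n$-cyclicity for each $n\geq 1$ simultaneously. I expect the main obstacle to be the backward direction of $(a)$, specifically the passage from unsatisfiability of the infinite cyclic type to a single formula $\vphi$ together with a strictly positive $\epsilon$: this is where the approximate nature of satisfiability in continuous logic must be handled carefully, whereas the remaining steps are routine once the $\EM$-type extraction is in hand.
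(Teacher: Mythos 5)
Your proposal is correct and follows essentially the same route as the paper: extract an indiscernible realization of the $\EM$-type for the forward direction, and use compactness (which you spell out via the finite approximate-satisfiability criterion) to produce a single formula $\vphi$ and $\epsilon>0$ from the unsatisfiable cyclic type for the backward direction. The paper's proof is the same argument in slightly more condensed form, and its part $(b)$ is likewise left as the uniform version of the forward direction.
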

\begin{proof}
We prove part $(a)$ and leave the proof of part $(b)$, which is similar, to the reader. Suppose first that $T$ has $\SOP_n$, witnessed by a formula $\vphi(\xbar,\ybar)$, an $\epsilon>0$, and a sequence $(\abar^l)_{l<\omega}$. Let $(\bbar^l)_{l<\omega}$ be an indiscernible sequence realizing the EM-type of $(\abar^l)_{l<\omega}$ (see Section \ref{EM} of \ref{appMT}). We show that $(\bbar^l)_{l<\omega}$ is not $n$-cyclic. Since $\vphi(\abar^l,\abar^m)=0$ for all $l<m$, it follows that the EM-type of $(\abar^l)_{l<\omega}$ contains the  condition ``$\vphi(\xbar^0,\xbar^1)=0$", and so $\vphi(\bbar^0,\bbar^1)=0$. Let $p(\xbar,\ybar)=\tp(\bbar^0,\bbar^1)$ and suppose, toward a contradiction, that $(\cbar^1,\ldots,\cbar^n)\models p(\xbar^1,\xbar^2)\cup\ldots\cup p(\xbar^{n-1},\xbar^n)\cup p(\xbar^n,\xbar^1)$. Then we have $\max\{\vphi(\cbar^1,\cbar^2),\ldots,\vphi(\cbar^{n-1},\cbar^n),\vphi(\cbar^n,\cbar^1)\}=0$, which contradicts
$$
\inf_{\xbar^1,\ldots,\xbar^n}\max\{\vphi(\xbar^1,\xbar^2),\ldots,\vphi(\xbar^{n-1},\xbar^n),\vphi(\xbar^n,\xbar^1)\}\geq\epsilon.
$$

Conversely, suppose there is an indiscernible sequence $(\abar^l)_{l<\omega}$ in $\M$, which is not $n$-cyclic. Let $p(\xbar,\ybar)=\tp(\abar^0,\abar^1)$. By compactness, there is a formula $\vphi(\xbar,\ybar)$ such that the condition ``$\vphi(\xbar,\ybar)=0$" is in $p(\xbar,\ybar)$, and $\{\vphi(\xbar^1,\xbar^2)=0,\ldots,\vphi(\xbar^{n-1},\xbar^n)=0,\vphi(\xbar^n,\xbar^1)=0\}$ is unsatisfiable. By compactness, we may fix $\epsilon>0$ such that
$$
\inf_{\xbar^1,\ldots,\xbar^n}\max\{\vphi(\xbar^1,\xbar^2),\ldots,\vphi(\xbar^{n-1},\xbar^n),\vphi(\xbar^n,\xbar^1)\}\geq\epsilon,
$$
and so $\vphi(\xbar,\ybar)$, together with $\epsilon$ and $(\abar^l)_{l<\omega}$, witness that $T$ has $\SOP_n$.
\end{proof}

Note that both Definition \ref{def:SOP}, as well as the statement of Proposition \ref{SOP}, can be stated for discrete logic without any alteration. In the discrete case, the previous result was first pointed out to us by Lynn Scow.

From the definitions, it is clear that if $T$ has $\SOP_\infty$ then it has $\SOP_n$ for all $n\geq 3$. Moreover, if $T$ has $\SOP_{n+1}$ then it has $\SOP_n$. Indeed, if $(\abar^l)_{l<\omega}$ is an indiscernible sequence, which is not $(n+1)$-cyclic, then one can show that $(\abar^{2l},\abar^{2l+1})_{l<\omega}$ is an indiscernible sequence, which is not $n$-cyclic. 

Finally, we recall the definition of the tree property of second kind. Our version follows the definition for classical logic from \cite{ChKa}. An equivalent formulation, which uses mutually indiscernible arrays, is given for continuous logic in \cite{BYrandom}. 

\begin{definition}
A theory has the \textbf{tree property of the second kind}, $\TP_2$, if there is a formula $\vphi(\xbar,\ybar)$, an integer $k>0$, and an array $(\abar_{i,j})_{i,j<\omega}$ such that
\begin{enumerate}[$(i)$]
\item for all $\sigma\in\omega^\omega$, $\{\vphi(\xbar,\abar_{n,\sigma(n)})=0:n<\omega\}$ is satisfiable,
\item for all $n<\omega$ and $\{\vphi(\xbar,\abar_{n,i})=0: i<\omega\}$ is $k$-unsatisfiable.
\end{enumerate}
\end{definition}

\subsection{Forking and Dividing}

We now turn to forking and dividing in  continuous logic. We emphasize that these notions have been previously formulated and frequently studied in the continuous setting. For example, the definition of dividing in \cite{BBHU} is identical to the statement of Theorem \ref{FANDD}$(a)$ below. On the other hand, there does not seem to be a definitive source for a general definition of forking in continuous logic, since most work has focused on stable and dependent theories. Therefore, we have chosen to start with definitions that most closely resemble their standard syntactic counterparts in classical logic. We will then discuss the equivalence of these definitions with those appearing previous literature.

Given a (possibly incomplete) type $\pi(\xbar)$ and a formula $\vphi(\xbar)$, we write $\pi(\xbar)\models``\vphi(\xbar)=0"$ if $\vphi(\abar)=0$ for every realization $\abar$ of $\pi(\xbar)$.

\begin{definition}
Fix $C\subset\M$.
\begin{enumerate}
\item Fix a formula $\vphi(\xbar,\ybar)$ and a tuple $\bbar\in\M$. 
\begin{enumerate}[$(a)$]
\item $\vphi(\xbar,\bbar)$ \textbf{divides over $C$} if there is a sequence $(\bbar^l)_{l<\omega}$, an $\epsilon>0$, and an integer $k>0$ such that $\bbar^l\equiv_C\bbar$ for all $l<\omega$ and $\{\vphi(\xbar,\bbar^l)\leq\epsilon:l<\omega\}$ is $k$-unsatisfiable.
\item $\vphi(\xbar,\bbar)$ \textbf{forks over $C$} if there are formulas $\psi_1(\xbar,\bbar_1),\ldots,\psi_m(\xbar,\bbar_m)$ such that each $\psi_i(\xbar,\bbar_i)$ divides over $C$ and 
$$
\{\vphi(\xbar,\bbar)=0\}\models ``\min\{\psi_1(\xbar,\bbar_1),\ldots,\psi_m(\xbar,\bbar_m)\}=0".
$$
\end{enumerate}
\item A type $\pi(\xbar)$ \textbf{forks} (resp. \textbf{divides}) \textbf{over $C$} if there is a formula $\vphi(\xbar,\bbar)$, which forks (resp. divides) over $C$, such that $\pi(\xbar)\models``\vphi(\xbar,\bbar)=0"$.
\end{enumerate}
\end{definition}

We start with the following useful lemma.

\begin{lemma}\label{divprop2}
If $\vphi(x,\bbar)$ divides over $C\subset\M$, then there is some $\epsilon>0$ such that $\vphi(x,\bbar)\dotminus\epsilon$ divides over $C$.
\end{lemma}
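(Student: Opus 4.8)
The plan is to unwind both occurrences of the definition of dividing and reduce the statement to a single identity about truncated subtraction. Suppose $\vphi(x,\bbar)$ divides over $C$, witnessed by a sequence $(\bbar^l)_{l<\omega}$, a real $\delta>0$, and an integer $k>0$ with $\bbar^l\equiv_C\bbar$ for all $l$ and with $\{\vphi(x,\bbar^l)\leq\delta:l<\omega\}$ being $k$-unsatisfiable. My goal is to produce an $\epsilon>0$ for which the same data (or a harmless modification of it) witnesses that $\vphi(x,\bbar)\dotminus\epsilon$ divides over $C$.

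The key observation I would use is that for reals $r,\epsilon,\eta\in[0,1]$ one has $(r\dotminus\epsilon)\dotminus\eta=r\dotminus(\epsilon+\eta)$, and that no truncation at $1$ intervenes since $\epsilon+\eta$ may be kept well below $1$. Reading this with $r=\vphi(x,\bbar^l)$, the condition ``$(\vphi(x,\bbar^l)\dotminus\epsilon)\leq\eta$'' is literally the condition ``$\vphi(x,\bbar^l)\leq\epsilon+\eta$''. I would therefore set $\epsilon=\eta=\delta/2$, so that ``$(\vphi(x,\bbar^l)\dotminus\epsilon)\leq\eta$'' becomes exactly ``$\vphi(x,\bbar^l)\leq\delta$''. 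The same sequence $(\bbar^l)_{l<\omega}$, the same integer $k$, and the new threshold $\eta=\delta/2$ then witness, directly from the hypothesis, that $\{(\vphi(x,\bbar^l)\dotminus\epsilon)\leq\eta:l<\omega\}$ is $k$-unsatisfiable, which is precisely the assertion that $\vphi(x,\bbar)\dotminus\epsilon$ divides over $C$.

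There is essentially no real obstacle here; the only points deserving attention are bookkeeping ones. First, one should confirm that $\delta<1$: otherwise $\{\vphi(x,\bbar^l)\leq\delta\}$ is satisfied by every element (as $\vphi$ is $[0,1]$-valued), hence satisfiable, contradicting $k$-unsatisfiability since $\tp(\bbar/C)$ is consistent; this guarantees $\epsilon+\eta=\delta<1$, so the truncated-addition convention never interferes. Second, one should note the monotonicity of $k$-unsatisfiability under strengthening the threshold: in fact any $\epsilon,\eta>0$ with $\epsilon+\eta\leq\delta$ would serve equally well, since lowering the threshold only shrinks the solution set of each condition. The equality choice $\epsilon+\eta=\delta$ is simply the cleanest, as it reproduces the original $k$-unsatisfiable set verbatim and so requires no further estimate.
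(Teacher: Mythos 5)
Your proof is correct and follows essentially the same route as the paper: the paper likewise reuses the original witnessing sequence and $k$, halves the original threshold $\delta$, and notes that ``$(\vphi(x,\bbar^l)\dotminus\frac{\delta}{2})\leq\frac{\delta}{2}$'' is the same condition as ``$\vphi(x,\bbar^l)\leq\delta$''. Your additional bookkeeping about truncation and monotonicity is fine but not needed.
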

\begin{proof}
Fix a sequence $(\bbar^l)_{l<\omega}$, an integer $k>0$, and some $\epsilon>0$, which witness that $\vphi(x,\bbar)$ divides over $C$. Then $(\bbar^l)_{l<\omega}$, $k$, and $\frac{\epsilon}{2}$ witness that $\vphi(x,b)\dotminus\frac{\epsilon}{2}$ divides over $C$.
\end{proof}

Next, we state equivalent formulations of these notions, which are familiar from classical logic and will be much easier to work with. The proof of these equivalences is similar to the discrete case, and we outline the argument in \ref{appMT}.

\begin{theorem}\label{FANDD}
Fix $C\subset\M$ and a partial type $\pi(\xbar,\bbar)$. 
\begin{enumerate}[$(a)$]
\item $\pi(\xbar,\bbar)$ divides over $C$ if and only if there is a $C$-indiscernible sequence $(\bbar^l)_{l<\omega}$, with $\bbar^0=\bbar$, such that $\bigcup_{l<\omega}\pi(\xbar,\bbar^l)$ is unsatisfiable.
\item $\pi(\xbar,\bbar)$ forks over $C$ if and only if there is some $D\supseteq\bbar C$ such that any extension of $\pi(\xbar,\bbar)$, to a complete type over $D$, divides over $C$.
\end{enumerate}
\end{theorem}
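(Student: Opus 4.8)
The plan is to prove Theorem \ref{FANDD} by adapting the standard classical arguments, handling each part separately.

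For part $(a)$, the strategy is to prove both directions using the machinery of indiscernible sequences realizing EM-types. For the backward direction, suppose we have a $C$-indiscernible sequence $(\bbar^l)_{l<\omega}$ with $\bbar^0 = \bbar$ such that $\bigcup_{l<\omega}\pi(\xbar,\bbar^l)$ is unsatisfiable. By compactness (and recalling that conditions in $\pi$ have the form ``$\vphi = 0$"), there is a finite subtype and some $\epsilon > 0$ and integer $k$ witnessing $k$-inconsistency at level $\epsilon$; since the sequence is $C$-indiscernible and each $\bbar^l \equiv_C \bbar$, this directly yields dividing in the sense of the definition. For the forward direction, suppose $\vphi(\xbar,\bbar)$ (or rather the finitely many conditions of $\pi$) divides, witnessed by a sequence $(\bbar^l)_{l<\omega}$ with each $\bbar^l \equiv_C \bbar$ and $k$-inconsistency of $\{\vphi(\xbar,\bbar^l)\leq\epsilon : l<\omega\}$. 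I would then extract a $C$-indiscernible sequence $(\bbar'^l)_{l<\omega}$ realizing the EM-type of $(\bbar^l)_{l<\omega}$ over $C$, as provided by the Ramsey-theoretic construction in \ref{appMT}. The key point is that $k$-inconsistency (at level $\epsilon$) is an EM-type property: for each finite set of indices, the condition asserting $k$-inconsistency is captured by an appropriate continuous formula that transfers to the indiscernible sequence. One must be slightly careful here, because passing to the EM-type may weaken strict inequalities; the standard remedy is to pass from $\epsilon$ to $\epsilon/2$, exactly as in Lemma \ref{divprop2}, so that the inconsistency is preserved in the limit. Finally, an automorphism fixing $C$ can be applied to arrange $\bbar'^0 = \bbar$.

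For part $(b)$, I would prove both directions of the equivalence between the syntactic definition of forking and the existence-of-nondividing-extensions formulation. The backward direction is the easier: if every completion of $\pi(\xbar,\bbar)$ over some $D \supseteq \bbar C$ divides over $C$, then by compactness one covers the (compact) type space of completions by finitely many basic open sets, each forcing a dividing condition, and assembling these yields finitely many dividing formulas whose minimum is forced to be $0$ by $\pi$, which is precisely the syntactic definition of forking. The forward direction requires producing the set $D$: starting from $\{\vphi(\xbar,\bbar)=0\} \models ``\min\{\psi_1(\xbar,\bbar_1),\ldots,\psi_m(\xbar,\bbar_m)\}=0"$ with each $\psi_i$ dividing over $C$, I would take $D = \bbar\bbar_1\cdots\bbar_m C$ and argue that any completion of $\pi$ over $D$ must assert ``$\psi_i(\xbar,\bbar_i) = 0$" (up to arbitrarily small error) for some $i$, hence divides over $C$. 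The continuous subtlety is that ``min $= 0$" does not force an exact choice of a single $\psi_i$ at a point, only that the infimum is $0$; the fix is to work with the $\dotminus\epsilon$ truncations and use Lemma \ref{divprop2} together with a covering/compactness argument on the space of completions to guarantee that each completion genuinely divides.

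The main obstacle I anticipate is the continuous-logic bookkeeping around strict versus non-strict inequalities and the $\dotminus\epsilon$ truncations. In classical logic, formulas are two-valued and ``$\vphi$ divides" refers cleanly to the inconsistency of $\{\vphi(\xbar,\bbar^l)\}_l$; in the continuous setting, dividing is defined through the thickened conditions $\vphi(\xbar,\bbar^l)\leq\epsilon$, and both the EM-type extraction in part $(a)$ and the disjunction-of-dividing-formulas analysis in part $(b)$ require tracking how these $\epsilon$-neighborhoods behave under limits and under the passage from ``min $=0$" to individual conditions. Lemma \ref{divprop2} is the essential tool that lets these estimates close up, and the heart of the argument is deploying it at the right moments so that $k$-inconsistency survives every limiting or covering step.
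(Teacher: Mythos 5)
Your overall strategy coincides with the paper's: part $(a)$ is handled by encoding $k$-inconsistency at level $\epsilon$ as a single condition (the paper uses $\vphi^\epsilon_k(y_1,\ldots,y_k)=\sup_x\min_{1\leq j\leq k}(\epsilon\dotminus\vphi(x,y_j))$) that belongs to the EM-type and therefore survives passage to an indiscernible realization, with exactly the $\epsilon\mapsto\epsilon/2$ adjustment you describe; and the backward direction of part $(b)$ is the same compactness/covering argument on the space of completions, using the $\theta\dotminus\epsilon_\theta$ truncations supplied by Lemma \ref{divprop2}.

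The one step you elide, and which does require an argument, is the reduction carried out by Lemma \ref{divprop1}. In the forward direction of $(a)$, the definition of ``$\pi(\xbar,\bbar)$ divides over $C$'' only provides a dividing formula $\vphi(\xbar,\bbar')$ whose parameters $\bbar'$ need not be among $\bbar$; the sequence witnessing its dividing consists of $C$-conjugates of $\bbar'$, not of $\bbar$, so it does not directly yield a sequence along which $\bigcup_l\pi(\xbar,\bbar^l)$ is unsatisfiable. One must first replace $\vphi(\xbar,\bbar')$ by the maximum $\theta(\xbar,\bbar)$ of finitely many conditions of $\pi(\xbar,\bbar)$ itself: compactness gives a finite $\pi_0\seq\pi$ and a $\delta>0$ with $``\theta(\xbar,\bbar)\leq\delta"\models``\vphi(\xbar,\bbar')\leq\epsilon"$, and conjugating the pair $\bbar\bbar'$ transfers the $k$-inconsistency to $\{\theta(\xbar,\bbar^l)\leq\delta:l<\omega\}$. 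The same reduction is needed in the backward direction of $(b)$, to extract from each dividing completion over $D$ a dividing $\cL_D$-formula that is genuinely a condition of that completion (so that the covering sets are basic open sets over $D$). Finally, your worry in the forward direction of $(b)$ is unnecessary: a complete type over $D$ assigns an exact value to each $\psi_i(\xbar,\bbar_i)$, and since the minimum of these values is forced to be $0$, some $\psi_i$ is assigned the value $0$ outright; no $\epsilon$-approximation or covering is needed there.
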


\begin{definition} 
Define the following ternary relations on subsets of $\M$,
\begin{align*}
\textstyle A\ind^d_C B &\textnormal{ if and only if } \tp(A/BC)\textnormal{ does not divide over $C$},\\
\textstyle A\ind^f_C B &\textnormal{ if and only if }\tp(A/BC)\textnormal{ does not fork over $C$}.
\end{align*}
\end{definition}

We note the following basic fact that forking and dividing satisfy finite character. The proof follows from compactness exactly as in discrete logic.

\begin{fact}\label{basic}
Suppose $A,B,C\subset\M$. Then $\textstyle A\ind^d_C B$ if and only if $\textstyle\abar\ind^d_C \bbar$ for all finite $\abar\in A$ and $\bbar\in B$. The same is true with $\ind^d$ replaced by $\ind^f$.
\end{fact}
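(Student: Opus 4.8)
The plan is to prove Fact \ref{basic} directly from compactness, exactly as one does in discrete logic, reducing the infinite statement to finite tuples. The forward direction (from $A\ind^d_C B$ to $\abar\ind^d_C\bbar$ for all finite subtuples) is essentially monotonicity: if some finite $\abar\in A$ and $\bbar\in B$ had $\tp(\abar/\bbar C)$ dividing over $C$, witnessed by a formula $\vphi(\xbar,\bbar)$ with $\tp(\abar/\bbar C)\models``\vphi(\xbar,\bbar)=0"$ and a $C$-indiscernible sequence $(\bbar^l)_{l<\omega}$ making $\bigcup_l\pi(\xbar,\bbar^l)$ unsatisfiable (using Theorem \ref{FANDD}$(a)$), then I would lift this to show $\tp(A/BC)$ divides over $C$. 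Concretely, I would note that the same formula $\vphi$ (in the appropriate subtuple of variables) still satisfies $\tp(A/BC)\models``\vphi(\xbar,\bbar)=0"$, since any realization of $\tp(A/BC)$ restricts to a realization of $\tp(\abar/\bbar C)$, and the same indiscernible sequence witnesses dividing of $\vphi(\xbar,\bbar)$ over $C$. Hence $\tp(A/BC)$ divides over $C$, giving the contrapositive.

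For the reverse direction, I would assume $\tp(A/BC)$ divides over $C$ and extract a finite witness. By definition there is a formula $\vphi(\xbar,\bbar)$ with $\tp(A/BC)\models``\vphi(\xbar,\bbar)=0"$ such that $\vphi(\xbar,\bbar)$ divides over $C$. Since $\vphi$ is a formula, it mentions only finitely many variables from $\xbar$ and finitely many parameters, so $\bbar$ may be taken to be a finite tuple from $B$ and $\vphi(\xbar,\bbar)$ involves only finitely many variables $\xbar'$ corresponding to a finite subtuple $\abar$ of $A$. The key point is that $\tp(A/BC)\models``\vphi(\xbar,\bbar)=0"$ already forces $\tp(\abar/\bbar C)\models``\vphi(\xbar',\bbar)=0"$, because the condition only constrains the coordinates appearing in $\vphi$. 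Thus the same $\vphi(\xbar',\bbar)$, which divides over $C$, witnesses that $\tp(\abar/\bbar C)$ divides over $C$, i.e. $\abar\nind^d_C\bbar$ for this finite $\abar,\bbar$.

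The statement for $\ind^f$ follows the same pattern, with one extra observation: forking of $\vphi(\xbar,\bbar)$ over $C$ is defined via finitely many dividing formulas $\psi_1(\xbar,\bbar_1),\ldots,\psi_m(\xbar,\bbar_m)$, each involving finitely many variables and finitely many parameters from $B$. Collecting the finitely many parameters $\bbar,\bbar_1,\ldots,\bbar_m$ into a single finite tuple $\bbar^*\in B$ and the finitely many relevant variable-coordinates into a single finite tuple $\abar\in A$, the same formulas witness that $\tp(\abar/\bbar^* C)$ forks over $C$. The forward direction again is monotonicity, using that $\tp(A/BC)\models``\vphi(\xbar,\bbar)=0"$ whenever the corresponding restriction holds on a finite subtuple.

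I do not expect any serious obstacle here; the content is purely the finitary nature of formulas plus the definition of dividing/forking for types. The one point requiring mild care is bookkeeping of variable indices: when passing from $A$ to a finite subtuple $\abar$, I must ensure the variables $\xbar'$ appearing in $\vphi$ are exactly those indexed by $\abar$, and that the divisibility of $\vphi(\xbar',\bbar)$ over $C$ (a property of the formula and a $C$-indiscernible sequence of $\bbar$'s) is unaffected by restricting the ambient tuple. This is immediate since dividing is a property of the formula-with-parameters alone, independent of which larger type we regard it as witnessing. The argument is thus a routine application of compactness and the syntactic definitions, and I would present it concisely, citing Theorem \ref{FANDD}$(a)$ for the indiscernible-sequence formulation of dividing where convenient.
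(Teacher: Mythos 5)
Your overall strategy is the standard one; the paper gives no proof of Fact \ref{basic} beyond noting that it follows from compactness exactly as in discrete logic, and your forward direction is correct (monotonicity: a condition over $\bbar C$ implied by the complete type $\tp(\abar/\bbar C)$ is, since $\abar$ realizes its own type, literally a condition of that type, hence of $\tp(A/BC)$, so any realization of the latter restricts to a realization of the former). There is, however, a genuine gap in the reverse direction, at the step ``since $\vphi$ is a formula, it mentions only finitely many variables and finitely many parameters, so $\bbar$ may be taken to be a finite tuple from $B$.'' The paper's definition of a type dividing (or forking) over $C$ only requires $\pi(\xbar)\models``\vphi(\xbar,\bbar)=0"$ for \emph{some} formula $\vphi(\xbar,\bbar)$ dividing (forking) over $C$; the parameters $\bbar$ are not required to lie in the domain $BC$ of the type. ``Finitely many'' therefore does not yield ``from $B$,'' and you cannot simply read a finite subtuple of $B$ off the witnessing formula. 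The problem is even more visible in the forking case: in the definition of forking, the dividing formulas $\psi_1(\xbar,\bbar_1),\ldots,\psi_m(\xbar,\bbar_m)$ are explicitly allowed arbitrary parameters (this is the whole point of forking versus dividing), so ``collecting $\bbar,\bbar_1,\ldots,\bbar_m$ into a single finite tuple $\bbar^*\in B$'' is not available.

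The missing ingredient for dividing is precisely Lemma \ref{divprop1}: if $\tp(A/BC)$ divides over $C$, then the max of finitely many conditions \emph{from the type itself} divides over $C$; these conditions involve only finitely many variables and finitely many parameters of $BC$, and restricting to the corresponding finite $\abar\in A$ and $\bbar\in B$ then gives $\abar\nind^d_C\bbar$ as you intended. For forking one needs the analogous compactness argument: from $\tp(A/BC)\models``\vphi(\xbar,\bbar')=0"$ with $\{\vphi(\xbar,\bbar')=0\}\models``\min_i\psi_i=0"$, use Lemma \ref{divprop2} to replace each $\psi_i$ by $\psi_i\dotminus\epsilon_i$ (still dividing over $C$), and compactness to find a finite $p_0\seq\tp(A/BC)$ with $\{\max p_0=0\}\models``\min_i(\psi_i\dotminus\epsilon_i)=0"$; the $\psi_i$ keep their arbitrary parameters, and only $p_0$ needs to live over a finite subtuple of $A$ and of $BC$. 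With these repairs your argument is complete.
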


In \ref{appMT}, we verify that, as in classical logic, $\ind^f$ always satisfies extension. As a result, we have the following test for when forking and dividing are the same for complete types. 

\begin{theorem}\label{ext reduct} The following are equivalent.
\begin{enumerate}[$(i)$]
\item For all $A,B,C,\subset\M$, $A\ind^d_C B$ if and only if $A\ind^f_C B$.
\item The relation $\ind^d$ satisfies \textbf{extension}, i.e. for all $A,B,C,D\subset\M$, if $A\ind^d_C B$ and $BC\seq D$ then there is $A'\equiv_{BC} A$ such that $A'\ind^d_C D$.
\item\label{the reduction} For all $A,B,C\subset\M$, if $A\ind^d_C B$ and $b_*\in\M$ is a singleton then there is $A'\equiv_{BC}A$ such that $A'\ind^d_C Bb_*$. 
\end{enumerate}
\end{theorem}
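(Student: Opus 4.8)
The plan is to prove the three conditions equivalent via the cycle $(i)\Rightarrow(ii)\Rightarrow(iii)\Rightarrow(i)$, with essentially all of the content in the last implication. For $(i)\Rightarrow(ii)$: since we have already noted that $\ind^f$ always satisfies extension, and $(i)$ identifies $\ind^d$ with $\ind^f$, the relation $\ind^d$ inherits extension. For $(ii)\Rightarrow(iii)$: this is immediate, as $(iii)$ is the instance of $(ii)$ with $D=Bb_*C$.

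The real work is $(iii)\Rightarrow(i)$. First I would observe that $A\ind^f_C B$ always implies $A\ind^d_C B$ (dividing is the case $m=1$ of forking), so it suffices to prove the converse under hypothesis $(iii)$. By Theorem \ref{FANDD}$(b)$, $\tp(A/BC)$ does \emph{not} fork over $C$ precisely when, for every $D\supseteq BC$, some completion of $\tp(A/BC)$ to a complete type over $D$ does not divide over $C$. Hence it is enough to upgrade the single-element extension $(iii)$ to full extension for $\ind^d$: given $A\ind^d_C B$ and $BC\seq D$, produce $A'\equiv_{BC}A$ with $A'\ind^d_C D$; realizing the corresponding type furnishes the required nondividing completion, whence $(i)$.

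I would carry out this upgrade in two steps. Step 1 (finite extension): iterating $(iii)$ along a finite tuple $\dbar=(d_1,\dots,d_n)$ from $D$ — at stage $i$ applying $(iii)$ with base $Bd_1\cdots d_{i-1}$ — yields $A_{\dbar}\equiv_{BC}A$ with $A_{\dbar}\ind^d_C B\dbar$, and hence $A_{\dbar}\ind^d_C\dbar$ by monotonicity. Step 2 (compactness): writing $p=\tp(A/BC)$ in a tuple of variables $\xbar$ of length $\ell(A)$, I consider the compact space $S_p(D)$ of complete types over $D$ extending $p$, and for each finite $\dbar$ from $D$ the set $X_{\dbar}=\{q\in S_p(D): q\restriction C\dbar\text{ does not divide over }C\}$. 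Since dividing of a type is witnessed by a single formula with finitely many parameters, a type $q$ does not divide over $C$ exactly when $q\in\bigcap_{\dbar}X_{\dbar}$, so I need this intersection to be nonempty. Step 1 gives $X_{\dbar}\neq\emptyset$, since $\tp(A_{\dbar}/D)$ restricts to a nondividing type over $C\dbar$ and restricts to $p$ over $BC$; the family $\{X_{\dbar}\}$ is downward directed, so it has the finite intersection property, and any point of the intersection is a nondividing completion.

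The main obstacle — and the only genuinely continuous feature — is showing that each $X_{\dbar}$ is closed, so that compactness applies. Here I would prove the complement $\{q: q\restriction C\dbar\text{ divides over }C\}$ is open using Lemma \ref{divprop2}: if $q$ lies in this complement, fix $\vphi(\xbar,\dbar)$ with $\vphi(\xbar,\dbar)^q=0$ and $\vphi(\xbar,\dbar)$ dividing over $C$, then choose $\epsilon>0$ with $\vphi(\xbar,\dbar)\dotminus\epsilon$ dividing over $C$; the set $\{q':\vphi(\xbar,\dbar)^{q'}<\epsilon\}$ is an open neighborhood of $q$ on which $\vphi(\xbar,\dbar)\dotminus\epsilon$ is forced to value $0$, so every $q'$ in it has $q'\restriction C\dbar$ dividing over $C$. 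In discrete logic the condition ``$\vphi=0$'' is clopen and this step is automatic; the $\epsilon$-slack supplied by Lemma \ref{divprop2} is exactly what repairs the argument in the continuous setting.
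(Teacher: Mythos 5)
Your proposal is correct and follows essentially the same route as the paper: the equivalence $(i)\Leftrightarrow(ii)$ via extension for $\ind^f$ and Theorem \ref{FANDD}$(b)$, and the main implication $(iii)\Rightarrow(ii)$ by iterating the single-element extension over finite tuples and then applying compactness, with Lemma \ref{divprop2} supplying the $\epsilon$-slack needed in the continuous setting. Your topological phrasing (finite intersection property of the closed sets $X_{\dbar}$ in $S_p(D)$) is just a repackaging of the paper's finite satisfiability argument for the partial type $\tp_{\xbar}(A/BC)\cup\{\vphi(\xbar,\bbar)\geq\epsilon_\vphi:\vphi\in\Sigma\}$.
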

\begin{proof}
The equivalence of $(i)$ and $(ii)$ can be shown exactly as in the discrete case, and we outline the proof in \ref{appMT}. The implication $(ii)\Rightarrow (iii)$ is trivial. Condition $(iii)$ is specifically formulated for later results, so we prove $(iii)\Rightarrow (ii)$. 

Assume $(iii)$ and suppose we have $A,B,C\subset\M$ such that $A\ind^d_C B$. Fix an enumeration $A=(a_i)_{i<\lambda}$ and let $\xbar=(x_i)_{i<\lambda}$ be a tuple of variables. Suppose $D\supseteq BC$. Define $\Sigma=\{\vphi(\xbar,\bbar):\vphi(\xbar,\ybar)\in\mathcal{L}_C,~\bbar\in D,~\vphi(\xbar,\bbar)\textnormal{ divides over $C$}\}$.

Given $\vphi\in\Sigma$ we use Lemma \ref{divprop2} to fix $\epsilon_\vphi>0$ such that $\vphi(\xbar,\bbar)\dotminus\epsilon_\vphi$ divides over $C$. Define $p(\xbar)=\tp_{\xbar}(A/BC)\cup\{\vphi(\xbar,\bbar)\geq\epsilon_\vphi:\vphi(\xbar,\bbar)\in\Sigma\}$. \\

\noindent\textit{Claim}: $p(\xbar)$ is satisfiable.

\noindent\textit{Proof}: By compactness, we may reduce to a finite subset $p_0(\xbar)\seq p(\xbar)$. Then $p_0(\xbar)$ is implied by a type of the form
$$
\pi(\xbar)=\tp_{\xbar}(A/BC)\cup\{\vphi(\xbar,\bbar,\dbar)\geq\epsilon_\vphi:\bbar\in B,~\vphi(\xbar,\bbar,\dbar)\textnormal{ divides over $C$}\},
$$
where $\dbar\in D$ is a fixed finite tuple.

Let $\dbar=(d_1,\ldots,d_n)$. Given $0\leq k\leq n$, suppose we have $A_k\equiv_{BC}A$ such that $A_k\ind^d_C B(d_i)_{i\leq k}$ (where the case $k=0$ is satisfied by $A_0=A$). By $(ii)$, there is $A_{k+1}\equiv_{BC(d_i)_{i\leq k}}A_k$ such that $A_{k+1}\ind^d_C B(d_i)_{i\leq k+1}$. Altogether, this constructs $A_n\equiv_{BC}A$ such that $A_n\ind^d_C B\dbar$. 

To finish the proof of the claim, we show $A_n\models\pi(\xbar)$. We have $A_n\equiv_{BC}A$ so if $A_n\not\models p(\xbar)$ then it follows that there is some $\vphi(\xbar,\bbar,\dbar)$, which divides over $C$, such that $\bbar\in B$ and $\vphi(A_n,\bbar,\dbar)<\epsilon_\vphi$. By choice of $\epsilon_\vphi$, this means $\tp(A_n/BC\dbar)$ divides over $C$, which contradicts the choice of $A_n$.\claim\\

By the claim, we may fix a realization $A'$ of $p(\xbar)$. We clearly have $A'\equiv_{BC} A$, and we want to show $A'\ind^d_C D$. If $A'\nind^d_C D$ then there is some formula $\vphi(\xbar,\bbar)\in\Sigma$, such that $``\vphi(\xbar,\bbar)=0"\in\tp(A'/D)$. But this contradicts $``\vphi(\xbar,\bbar)\geq\epsilon_\vphi"\in p(\xbar)$.
\end{proof}

\section{The Urysohn sphere}\label{sec:main}

We now turn to the main goals of this paper, which are characterizations of model theoretic properties of the Urysohn sphere. Before defining the Urysohn sphere, it is worth reiterating our conventions regarding metric spaces. In particular, we will only consider metric spaces with diameter bounded by $1$, and so, when carrying out calculations with distances, we truncate all operations at $1$. By convention, we set $\inf\emptyset=1$ and $\sup\emptyset=0$.

Following \cite{MeUS}, we recall that a separable metric space $X$ is \textbf{universal} if every separable metric space is isometric to a subspace of $X$, and \textbf{ultrahomogeneous} if every isometry between finite subspaces of $X$ extends to an isometry of $X$. 

We now define the Urysohn sphere, first constructed by Urysohn in \cite{Ury}.

\begin{definition}
The \textbf{Urysohn sphere}, $\cU$, is the unique complete separable metric space of diameter 1, which is ultrahomogeneous and universal for separable metric spaces of diameter $\leq1$.
\end{definition}

We will consider the Urysohn sphere as a metric structure in continuous logic. See \cite{EaGo} and \cite{Usvy} for important results about the Urysohn sphere as a continuous structure. For us, the salient points are the following.

\begin{enumerate}
\item We consider $\cU$ in the ``empty language" containing only the metric $d$.
\item The theory of $\cU$ is separably categorical and has quantifier elimination in this language (see \cite{Usvy}). Therefore complete types are entirely determined by distances. In particular, if $M\models\Th(\cU)$, $C\seq M$ and $\abar=(a_1,\ldots,a_n)\in M$ then $\tp(\abar/C)$ is completely determined by
$$
\{d(x_i,x_j)=d(a_i,a_j):1\leq i,j\leq n\}\cup\{d(x_i,c):1\leq i\leq n,~c\in C\}.
$$
\end{enumerate}

Throughout this section, $\U$ denotes a sufficiently saturated ``monster" model of $\Th(\cU)$. By saturation and quantifier elimination, we have the following fact.

\begin{proposition}\label{embed}
If $A\subset\U$ and $B$ is a metric space such that $A\seq B$ and $\U$ is $\chi(B)$-saturated, then $B$ isometrically embeds into $\U$ over $A$.
\end{proposition}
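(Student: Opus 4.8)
The plan is to encode the desired embedding as a type over $A$, realize it by saturation, and extract consistency of that type from the universality and homogeneity of $\cU$. First I would pass to a dense subset: choose $A\seq B_0\seq B$ with $B_0$ dense in $B$ and $\chi(B_0)=\chi(B)$, and enumerate $B_0\setminus A=(b_i)_{i<\mu}$. With variables $\xbar=(x_i)_{i<\mu}$, consider the type over $A$
$$
p(\xbar)=\{d(x_i,x_j)=d(b_i,b_j):i,j<\mu\}\cup\{d(x_i,a)=d(b_i,a):i<\mu,\ a\in A\}.
$$
A realization $(c_i)_{i<\mu}$ of $p$ gives, via $b_i\mapsto c_i$ and $a\mapsto a$, an isometric embedding of $B_0$ into $\U$ fixing $A$ pointwise. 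Since models of $\Th(\cU)$ are complete metric spaces, this extends uniquely to an isometric embedding of $B=\overline{B_0}$ into $\U$ over $A$, by sending each $b\in B$ to the limit in $\U$ of the images of any sequence in $B_0$ converging to $b$. Everything thus reduces to showing $p(\xbar)$ is realized in $\U$.

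By saturation, $p$ is realized once it is consistent, i.e. once every finite subset of $p$ is satisfiable in $\U$; realizing $p$ variable-by-variable keeps the relevant parameter sets of density character at most $\chi(B)$, which is exactly the content of $\U$ being $\chi(B)$-saturated. A finite subset of $p$ names finitely many new points $b_{i_1},\ldots,b_{i_n}$ and finitely many parameters $a_1,\ldots,a_m\in A$, and asks for $c_1,\ldots,c_n\in\U$ with $d(c_s,c_t)=d(b_{i_s},b_{i_t})$ and $d(c_s,a_r)=d(b_{i_s},a_r)$. The finite set $F=\{a_1,\ldots,a_m,b_{i_1},\ldots,b_{i_n}\}$, with the metric inherited from $B$, is a finite metric space of diameter $\leq1$, and realizing this configuration over the prescribed points $a_1,\ldots,a_m$ is where the work lies.

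To realize it I would combine universality with quantifier elimination and homogeneity. Since $\cU$ is universal, $F$ embeds isometrically into $\cU$; as the existence of a finite configuration with prescribed mutual distances is expressed by continuous sentences true in $\cU$, it holds in $\U$ as well, so $F$ embeds isometrically into $\U$, say as $\{a_1',\ldots,a_m',c_1',\ldots,c_n'\}$ with $a_r'$ the image of $a_r$. Now $(a_1',\ldots,a_m')$ and $(a_1,\ldots,a_m)$ are isometric tuples in $\U$, hence have the same quantifier-free type, and by quantifier elimination the same complete type; by homogeneity of the monster $\U$ there is an automorphism $\sigma$ with $\sigma(a_r')=a_r$. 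Setting $c_s=\sigma(c_s')$ gives $d(c_s,a_r)=d(c_s',a_r')=d(b_{i_s},a_r)$ and $d(c_s,c_t)=d(b_{i_s},b_{i_t})$, so the finite subset of $p$ is satisfied.

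I expect the main obstacle to be precisely this consistency step — transporting an abstractly embedded copy of $F$ so that it lands on the designated points of $A$ — which is resolved by pairing universality (to place $F$ somewhere in $\U$) with quantifier-elimination-driven homogeneity (to move it into position). The remaining ingredients, namely the cardinality bookkeeping that keeps the parameter sets within the saturation bound and the passage from the dense subspace $B_0$ to all of $B$ using completeness of $\U$, are routine.
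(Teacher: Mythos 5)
Your proof is correct and follows exactly the route the paper intends: the paper offers no written proof, merely asserting the proposition "by saturation and quantifier elimination," and your argument (encode the embedding as a type over $A$, verify finite satisfiability via universality plus QE-driven homogeneity of the monster, realize by saturation with the usual cardinality bookkeeping, and pass from a dense subset to $B$ by completeness) is the standard elaboration of that one-line justification. The only point worth polishing is that placing the finite configuration $F$ in $\U$ with exact prescribed distances requires a brief appeal to compactness or saturation, since the continuous sentences inherited from $\cU$ only give approximate realizations.
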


\subsection{Classification of $\Th(\cU)$}\label{SOPUS}

The goal of this section is to place $\Th(\mathcal{U})$ in the $\SOP_n$-hierarchy. We will use the characterization of $\SOP_n$ using cyclic indiscernible sequences (see Proposition \ref{SOP}). In particular, given an integer $n>0$, we will be interested in the satisfiability of types of the form
$$
p(\xbar^1,\xbar^2)\cup p(\xbar^2,\xbar^3)\cup\ldots\cup p(\xbar^{n-1},\xbar^n)\cup p(\xbar^n,\xbar^1),
$$
where $p(\xbar,\ybar)=\tp(\abar^0,\abar^1)$ for some indiscernible sequence $(\abar^l)_{l<\omega}$. By quantifier elimination, a type of this form is simply a partially defined metric space on the set of points $\xbar^1\cup\ldots\cup\xbar^n$. Therefore, we set forth some basic notions and facts regarding the completion of partially defined metric spaces to fully defined ones.

\begin{definition}
Suppose $X$ is set and $f:\dom(f)\seq X\times X\longrightarrow[0,1]$ is a symmetric partial function.
\begin{enumerate}
\item $f$ is a \textbf{partial semimetric} if, for all $x\in X$, $(x,x)\in\dom(f)$ and $f(x,x)=0$. In this case, $(X,f)$ is a \textbf{partial semimetric space}. We say $(X,f)$ is \textbf{consistent} if there is a pseudometric on $X$ extending $f$.
\item Given $m\geq 1$, a sequence $(x_0,x_1,\ldots,x_m)$ in $X^{m+1}$ is an \textbf{$f$-sequence} if $(x_0,x_m)\in\dom(f)$ and $(x_i,x_{i+1})\in\dom(f)$ for all $0\leq i<m$. 
\item Given $m\geq 1$, if $\xbar=(x_0,\ldots,x_m)$ is an $f$-sequence, then we let $f[\xbar]$ denote $f(x_0,x_1)+f(x_1,x_2)+\ldots+f(x_{n-1},x_m)$.
\item Given $m\geq 1$, $f$ is \textbf{$m$-transitive} if $f(x_0,x_m)\leq f[\xbar]$ for all $f$-sequences $\xbar=(x_0,\ldots,x_m)$.
\end{enumerate}
\end{definition}

By adapting the minimal path metric for graphs to the setting of metric spaces, we obtain the following test for consistency of partial semimetric spaces.  

\begin{lemma}\label{consistent}
A partial semimetric space $(X,f)$ is consistent if and only if $f$ is $m$-transitive for all $m\geq 1$. 
\end{lemma}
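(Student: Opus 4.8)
The plan is to prove the equivalence of consistency with $m$-transitivity for all $m$ by constructing an explicit pseudometric via a ``minimal path'' formula, directly adapting the shortest-path metric from graph theory. The forward direction is routine: if $(X,f)$ is consistent, fix a pseudometric $\rho$ extending $f$. For any $f$-sequence $\xbar=(x_0,\ldots,x_m)$, the triangle inequality for $\rho$ (applied repeatedly) gives $\rho(x_0,x_m)\leq \sum_{i<m}\rho(x_i,x_{i+1})$, and since $\rho$ agrees with $f$ on $\dom(f)$ this reads $f(x_0,x_m)\leq f[\xbar]$. Hence $f$ is $m$-transitive for every $m\geq 1$.

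\textbf{The converse.} For the harder direction, assume $f$ is $m$-transitive for all $m\geq 1$. Define, for $x,y\in X$,
$$
\rho(x,y)=\inf\{f[\xbar] : \xbar \textnormal{ is an } f\textnormal{-sequence from } x \textnormal{ to } y\},
$$
where the infimum is over all $f$-sequences $\xbar=(x_0,\ldots,x_m)$ with $x_0=x$ and $x_m=y$ (recall the convention $\inf\emptyset=1$ when no such sequence exists, and that all sums are truncated at $1$). First I would check that $\rho$ is symmetric and that $\rho(x,x)=0$, both of which follow from the partial semimetric axioms (reversing an $f$-sequence gives an $f$-sequence by symmetry of $f$, and the length-$1$ sequence $(x,x)$ has $f[\xbar]=f(x,x)=0$). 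Next, the triangle inequality $\rho(x,z)\leq\rho(x,y)+\rho(y,z)$ follows from concatenation of $f$-sequences: given $f$-sequences from $x$ to $y$ and from $y$ to $z$, their concatenation is an $f$-sequence from $x$ to $z$ whose $f[\cdot]$-value is the (truncated) sum of the two, so taking infima gives the inequality. Thus $\rho$ is a pseudometric on $X$.

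\textbf{The key step: $\rho$ extends $f$.} The crux is to verify that $\rho(x,y)=f(x,y)$ whenever $(x,y)\in\dom(f)$. The inequality $\rho(x,y)\leq f(x,y)$ is immediate, since the length-$1$ $f$-sequence $(x,y)$ witnesses a value of $f(x,y)$ in the infimum. For the reverse inequality $\rho(x,y)\geq f(x,y)$, I would argue that every $f$-sequence $\xbar=(x_0,\ldots,x_m)$ from $x$ to $y$ satisfies $f[\xbar]\geq f(x,y)$: this is precisely the content of $m$-transitivity, which gives $f(x_0,x_m)\leq f[\xbar]$, i.e. $f(x,y)\leq f[\xbar]$. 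Taking the infimum over all such sequences yields $f(x,y)\leq\rho(x,y)$. The main subtlety to handle carefully is the truncation at $1$: one must confirm that $m$-transitivity as stated (with truncated sums in $f[\xbar]$) still delivers the bound, and that the convention $\inf\emptyset=1$ does no harm, since $(x,y)\in\dom(f)$ always provides at least one $f$-sequence. With $\rho$ a pseudometric extending $f$, the space $(X,f)$ is consistent by definition, completing the proof.
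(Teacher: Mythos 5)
Your proposal is correct and follows essentially the same route as the paper: the forward direction via repeated triangle inequality, and the converse via the minimal-path pseudometric $\inf\{f[\xbar]\}$, with $m$-transitivity supplying the inequality $f(x,y)\leq\rho(x,y)$. You in fact spell out the pseudometric axioms (symmetry, concatenation for the triangle inequality) that the paper leaves as ``clearly a pseudometric,'' so nothing is missing.
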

\begin{proof}
First, suppose $(X,f)$ is consistent and let $d$ be a pseudometric on $X$ extending $f$. If $m>0$ and $\xbar=(x_0,x_1,\ldots,x_m)$ is an $f$-sequence, then $d(x_0,x_m)=f(x_0,x_m)$ and $f[\xbar]=d(x_0,x_1)+\ldots+d(x_{m-1},x_m)$. Therefore $f(x_0,x_m)\leq f[\xbar]$ by repeated application of the triangle inequality. 

Conversely, suppose $f$ is $m$-transitive for all $m\geq 1$. Given $x,y\in X$, set
$$
d(x,y)=\inf\{f[\xbar]:\text{$\xbar=(x_0,\ldots,x_m)$ is an $f$-sequence with $x_0=x$ and $x_m=y$}\}
$$
Then $d$ is clearly a pseudometric on $X$, and so we only need to show that $d$ extends $f$. Indeed, if $(x,y)\in\dom(f)$ then $(x,y)$ is an $f$-sequence and so $d(x,y)\leq f(x,y)$. On the other hand, $f(x,y)\leq d(x,y)$ since $f$ is $m$-transitive for all $m\geq 1$.
\end{proof}

We now begin our analysis of indiscernible sequences in $\U$. 

\begin{lemma}\label{inds}
Suppose $(\abar^l)_{l<\omega}$ is an indiscernible sequence in $\U$, with $\ell(\abar^0)=k$ for some $k\geq 1$. Given $1\leq i,j\leq k$, define $\epsilon_{i,j}=d(a^0_i,a^1_j)$.
\begin{enumerate}[$(a)$]
\item For any $n\geq 1$ and $i_0,\ldots,i_n\in\{1,\ldots,k\}$, 
$$
\epsilon_{i_0,i_n}\leq\epsilon_{i_0,i_1}+\epsilon_{i_1,i_2}+\ldots+\epsilon_{i_{n-1},i_n}.
$$
\item\label{switch} For any $n\geq 1$ and $i_0,\ldots,i_n\in\{1,\ldots,k\}$, if there are $0\leq s<t\leq n$ such that $i_s=i_t$ then
$$
\epsilon_{i_n,i_0}\leq\epsilon_{i_0,i_1}+\epsilon_{i_1,i_2}+\ldots+\epsilon_{i_{n-1},i_n}.
$$
\end{enumerate}
\end{lemma}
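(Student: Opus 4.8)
The plan is to prove both parts by exhibiting explicit points of the sequence whose pairwise distances realize the relevant $\epsilon_{i,j}$, and then applying the triangle inequality in $\U$. The one fact I use throughout is that indiscernibility (with quantifier elimination) gives $d(a^p_i,a^q_j)=\epsilon_{i,j}$ whenever $p<q$: the distance from the $i$th coordinate of an earlier term to the $j$th coordinate of a later term is always $\epsilon_{i,j}$, while reversing the relative order of the terms produces $\epsilon_{j,i}$ instead.

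For part $(a)$ I would simply look at the points $a^0_{i_0},a^1_{i_1},\dots,a^n_{i_n}$. Consecutive distances are $d(a^l_{i_l},a^{l+1}_{i_{l+1}})=\epsilon_{i_l,i_{l+1}}$, and since $0<n$ the endpoints satisfy $d(a^0_{i_0},a^n_{i_n})=\epsilon_{i_0,i_n}$; one application of the triangle inequality then gives the inequality.

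For part $(b)$ the subtlety is that $\epsilon_{i_n,i_0}$ is the \emph{reverse} edge, so I must realize it as the distance between a coordinate-$i_n$ point sitting at a \emph{lower} term-index than the coordinate-$i_0$ point. Writing $u=i_s=i_t$, I would establish $\epsilon_{i_n,i_0}\le \epsilon_{u,i_n}+\epsilon_{u,u}+\epsilon_{i_0,u}$ by a chain of triangle inequalities through two copies of $u$: taking the four points $a^0_u,a^1_{i_n},a^2_{i_0},a^3_u$, the term-indices $0<1<2<3$ are arranged precisely so that $d(a^1_{i_n},a^2_{i_0})=\epsilon_{i_n,i_0}$ while $d(a^0_u,a^1_{i_n})=\epsilon_{u,i_n}$, $d(a^0_u,a^3_u)=\epsilon_{u,u}$, and $d(a^2_{i_0},a^3_u)=\epsilon_{i_0,u}$. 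I would then bound the three summands using part $(a)$ applied to the three consecutive blocks $(i_t,\dots,i_n)$, $(i_s,\dots,i_t)$, and $(i_0,\dots,i_s)$, giving $\epsilon_{u,i_n}\le\sum_{l=t}^{n-1}\epsilon_{i_l,i_{l+1}}$, $\epsilon_{u,u}=\epsilon_{i_s,i_t}\le\sum_{l=s}^{t-1}\epsilon_{i_l,i_{l+1}}$, and $\epsilon_{i_0,u}\le\sum_{l=0}^{s-1}\epsilon_{i_l,i_{l+1}}$. Since these blocks partition the index set $\{0,1,\dots,n-1\}$, adding them yields $(b)$.

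The main obstacle is the term-index bookkeeping: a naive monotone path through the sequence only reproves $\epsilon_{i_0,i_n}$, i.e. part $(a)$, and it is exactly the repetition $i_s=i_t$ that lets the chain ``turn around'' at $u$ so that the endpoints realize the reverse edge $\epsilon_{i_n,i_0}$. I also need to watch the degenerate cases $s=0$ and $t=n$, where one of the three blocks collapses to a single point and the corresponding instance of part $(a)$ becomes vacuous; there the relevant summand ($\epsilon_{i_0,u}$ or $\epsilon_{u,i_n}$) is simply absent and is dropped. A clean way to make the whole argument uniform is to replace the three-term chain by a single explicit path, running the distances $\epsilon_{i_l,i_{l+1}}$ for $l=t,\dots,n-1$ and for $l=0,\dots,s-1$ directly as consecutive forward steps and inserting one ``$u$-to-$u$'' jump of length $\epsilon_{u,u}$; in this formulation part $(a)$ is invoked only for the jump term $\epsilon_{u,u}=\epsilon_{i_s,i_t}$, an application that is always legitimate since $s<t$.
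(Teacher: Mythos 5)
Your proof is correct and takes essentially the same route as the paper's: part $(a)$ is the triangle inequality along the monotone path $a^0_{i_0},a^1_{i_1},\ldots,a^n_{i_n}$, and part $(b)$ uses exactly the paper's four points $a^0_{i_s},a^1_{i_n},a^2_{i_0},a^3_{i_s}$ to get $\epsilon_{i_n,i_0}\leq\epsilon_{i_s,i_n}+\epsilon_{i_s,i_s}+\epsilon_{i_0,i_s}$, then applies part $(a)$ to the three blocks $(i_0,\ldots,i_s)$, $(i_s,\ldots,i_t)$, $(i_t,\ldots,i_n)$. Your explicit handling of the degenerate cases $s=0$ and $t=n$ (which the paper leaves to the reader) is also correct, since the chain must indeed be shortened there rather than retaining a nonzero $\epsilon_{i_0,i_0}$ or $\epsilon_{i_n,i_n}$ term.
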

\begin{proof}
Part $(a)$: By indiscernibility,
$$
 \epsilon_{i_0,i_n} = d(a^0_{i_0},a^n_{i_n}) \leq d(a^0_{i_0},a^1_{i_1})+\ldots+d(a^{n-1}_{i_{n-1}},a^n_{i_n})=\epsilon_{i_0,i_1}+\ldots+\epsilon_{i_{n-1},i_n}.
$$
Part $(b)$: We assume $0<s<t<n$ (the cases $s=0$ and $t=n$ are similar and left to the reader). By indiscernibility and part $(a)$, we have
\begin{align*}
\epsilon_{{i_n},{i_0}} &= d(a^1_{i_n},a^2_{i_0}) \\
 &\leq d(a^1_{i_n},a^0_{i_s})+d(a^0_{i_s},a^3_{i_s})+d(a^3_{i_s},a^2_{i_0}) \\
 &= \epsilon_{i_s,i_n}+\epsilon_{i_s,i_s}+\epsilon_{i_0,i_s}\\
 &= \epsilon_{i_0,i_s}+\epsilon_{i_s,i_t}+\epsilon_{i_t,i_n}\\
 &\leq \epsilon_{i_0,i_1}+\epsilon_{i_1,i_2}+\ldots+\epsilon_{i_{n-1},i_n}.\qedhere
\end{align*}
\end{proof}

These transitivity properties will be useful when trying to show that certain indiscernible sequences are $n$-cyclic for some $n$. Specifically, we now show that the task of proving an indiscernible sequence is $n$-cyclic reduces to just checking transitivity for sequences like those in the previous lemma.

\begin{definition} Let $(X,f)$ be a partial semimetric space.
\begin{enumerate}
\item If $\xbar=(x_0,\ldots,x_m)$ is a sequence of elements of $X$, and $0\leq i<j\leq m$, let
$$
\xbar[x_i,x_j]:=(x_i,x_{i+1},\ldots,x_j).
$$
\item If $\xbar=(x_0,\ldots,x_m)$ is a sequence of elements of $X$, then a \textbf{subsequence of $\xbar$} is a sequence of the form $(x_0,x_{i_1},\ldots,x_{i_k},x_m)$, for some $0< i_1<\ldots<i_k<m$. If $1\leq k\leq m-2$ then the subsequence is \textbf{proper}.
\end{enumerate}
\end{definition}

\begin{lemma}\label{reduction}
Suppose $(\abar^l)_{l<\omega}$ is an indiscernible sequence in $\U$, with $\ell(\abar^0)=k$ for some $k\geq 1$. Given $1\leq i,j\leq k$, define $\epsilon_{i,j}=d(a^0_i,a^1_j)$. Given $n\geq 2$, the following are equivalent:
\begin{enumerate}[$(i)$]
\item $(\abar^l)_{l<\omega}$ is $n$-cyclic;
\item for all $i_1,\ldots,i_n\in\{1,\ldots,k\}$, $\epsilon_{i_n,i_1}\leq\epsilon_{i_1,i_2}+\epsilon_{i_2,i_3}+\ldots+\epsilon_{i_{n-1},i_n}$.
\end{enumerate}
\end{lemma}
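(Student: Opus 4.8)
The plan is to read off, via quantifier elimination, exactly what the cyclic type in Definition \ref{def:SOP} asserts, and then decide its satisfiability using Lemma \ref{consistent}. Writing $\delta_{i,j}=d(a^0_i,a^0_j)$, indiscernibility gives $d(a^l_i,a^l_j)=\delta_{i,j}$ for all $l$ and $d(a^l_i,a^m_j)=\epsilon_{i,j}$ whenever $l<m$. Hence, with $p(\xbar,\ybar)=\tp(\abar^0,\abar^1)$, the type $p(\xbar^1,\xbar^2)\cup\ldots\cup p(\xbar^{n-1},\xbar^n)\cup p(\xbar^n,\xbar^1)$ is, by quantifier elimination, nothing but the partial semimetric space on the $nk$ points $x^m_i$ ($1\le m\le n$, $1\le i\le k$) whose specified distances are $d(x^m_i,x^m_j)=\delta_{i,j}$ inside each ``column'' $m$ and $d(x^m_i,x^{m+1}_j)=\epsilon_{i,j}$ between cyclically consecutive columns (superscripts read mod $n$). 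As this space is finite, Proposition \ref{embed} shows it embeds isometrically into $\U$ as soon as it is consistent; so $(\abar^l)_{l<\omega}$ is $n$-cyclic if and only if this partial semimetric space is consistent, and it is the latter condition that I will compare with $(ii)$.

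The direction $(i)\Rightarrow(ii)$ is just the triangle inequality. If $\cbar^1,\ldots,\cbar^n$ realizes the cyclic type and $i_1,\ldots,i_n\in\{1,\ldots,k\}$, then the points $\cbar^1_{i_1},\ldots,\cbar^n_{i_n}$ satisfy $d(\cbar^m_{i_m},\cbar^{m+1}_{i_{m+1}})=\epsilon_{i_m,i_{m+1}}$ and $d(\cbar^n_{i_n},\cbar^1_{i_1})=\epsilon_{i_n,i_1}$, so walking from $\cbar^1_{i_1}$ to $\cbar^n_{i_n}$ gives $\epsilon_{i_n,i_1}\le\epsilon_{i_1,i_2}+\ldots+\epsilon_{i_{n-1},i_n}$, which is exactly $(ii)$. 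This is the cyclic analogue of Lemma \ref{inds}$(b)$.

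For $(ii)\Rightarrow(i)$ I verify, via Lemma \ref{consistent}, that $f$ is $m$-transitive for every $m$, i.e. that $f(y_0,y_L)\le f[\xbar]$ for every $f$-sequence $\xbar=(y_0,\ldots,y_L)$. Each $y_t$ is some $x^{c_t}_{r_t}$, and consecutive vertices lie in the same or in cyclically adjacent columns, so $\xbar$ is a closed walk on the cyclic arrangement $\mathbb{Z}/n\mathbb{Z}$ of columns. The key device is to lift this walk into the genuine metric subspace $Y=\{\abar^t_i:t<\omega,\ 1\le i\le k\}$ of $\U$, in which a within-column step has length $\delta_{i,j}$, a forward column step has length $\epsilon_{i,j}$, and a backward column step has length $\epsilon_{j,i}$. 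After shifting the lifted column indices to stay nonnegative, $f[\xbar]$ becomes the length of an honest path in $Y$, and the triangle inequalities that hold in $Y$ (for instance $\delta_{i,j}\le\epsilon_{i,l}+\epsilon_{j,l}$, together with the forward transitivity of Lemma \ref{inds}$(a)$) allow me to delete within-column steps and backtracking, reducing to a monotone forward run whose number of steps is the net winding of $\xbar$ around the cycle. The only inequality that $Y$ does not supply is the cost of a full turn, and this is exactly where $(ii)$ is used: combined with one triangle inequality it yields, for any forward indices $r_0,\ldots,r_n$, that $\epsilon_{r_0,r_1}+\ldots+\epsilon_{r_{n-1},r_n}\ge\delta_{r_0,r_n}$, so any forward run of $n$ steps that returns to its starting column may be replaced by a single within-column edge without increasing length. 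Folding out full turns in this manner drives the net winding down to a residue in $\{0,1,n-1\}$, matching the three possible $\dom(f)$-relations between $y_0$ and $y_L$; in these three remaining cases the required inequality $f(y_0,y_L)\le f[\xbar]$ is respectively trivial, an instance of Lemma \ref{inds}$(a)$, or precisely condition $(ii)$.

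The main obstacle is exactly this winding. The purely ``linear'' inequalities coming from the indiscernible sequence (Lemma \ref{inds}) are realized inside $Y$ and therefore cost nothing, but they are blind to the identification of column $n+1$ with column $1$; bounding the length of walks that genuinely wrap around the cycle is what forces the use of $(ii)$, and the delicate point is to organize the reduction so that each wrap is removed at the cost of a single within-column edge, without increasing the total length or moving the endpoints $y_0,y_L$.
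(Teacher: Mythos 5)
Your argument is correct, and the hard direction takes a genuinely different route from the paper's. The framing is the same in both: quantifier elimination and Proposition \ref{embed} identify $n$-cyclicity with consistency of the partial semimetric space on the $nk$ points $x^m_i$ (columns joined cyclically), Lemma \ref{consistent} reduces consistency to $m$-transitivity for all $m$, and $(i)\Rightarrow(ii)$ is one trip around a realized cycle. For $(ii)\Rightarrow(i)$, however, the paper argues by induction on the length of the $f$-sequence: whenever two column indices coincide, that pair lies in $\dom(f)$, giving a chord along which the sequence is cut into proper $f$-subsequences handled by the inductive hypothesis; the base cases are triangles isometric to ones inside the indiscernible sequence, plus the single monotone traversal of all $n$ columns, which is condition $(ii)$ after a rotation and possible reversal. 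You instead lift the walk to the universal cover $\mathbb{Z}$ of the cycle of columns, realize the lift isometrically inside the indiscernible sequence itself, and remove winding via the inequality $\epsilon_{r_0,r_1}+\ldots+\epsilon_{r_{n-1},r_n}\geq\delta_{r_0,r_n}$, which does follow from $(ii)$ together with the triangle on $a^0_{r_0},a^1_{r_1},a^0_{r_n}$, exactly as you claim. Your route makes the geometric content transparent --- precisely one inequality per full turn is not supplied by the ambient metric space and must come from $(ii)$ --- at the cost of combinatorial bookkeeping you leave implicit (extracting a monotone run by taking the last visit to each level, treating negative net displacement by reversing the walk, and absorbing the initial within-column correction); the paper's chord-shortcutting induction hides the winding entirely but disposes of all long $f$-sequences uniformly. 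In the end both proofs rest on the same two ingredients: triangle inequalities realized inside the sequence, and condition $(ii)$ for the wrap-around edge.
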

\begin{proof}
Fix $n\geq 2$ and an indiscernible sequence $(\abar^l)_{i<\omega}$, with $|\abar^0|=k$, for some $k\geq 1$. Let $X=\{x^l_i:1\leq l\leq n,~1\leq i\leq k\}$. We define a partial semimetric $f: X\times X\longrightarrow [0,1]$ as follows. 

Define $\dom(f)$ to be the symmetric closure of
$$
\{(x^l_i,x^m_j):1\leq i,j\leq k,~l,m<\omega,\text{ and }l\in\{m,m+1\}\text{ or }(l,m)=(1,n)\}.
$$
Given $(x^l_i,x^m_j)\in\dom(f)$, we define $f(x^l_i,x^m_j)=d(a^l_i,a^m_j)$ if $(l,m)\not\in\{(1,n),(n,1)\}$ and  $f(x^l_i,x^m_j)=d(a^1_i,a^0_j)$ if $(l,m)=(1,n)$. 

By Proposition \ref{embed}, $(\abar^l)_{l<\omega}$ is $n$-cyclic if and only if $(X,f)$ is consistent. Together with Lemma \ref{consistent}, we obtain
\begin{equation}
\textnormal{$(\abar^l)_{l<\omega}$ is $n$-cyclic if and only if $f$ is $m$-transitive for all $m\geq 1$.}\tag{$\dagger$}
\end{equation}
We now proceed with the proof of the result.

$(i)\Rightarrow(ii)$: If $(\abar^l)_{l<\omega}$ is $n$-cyclic then for all $i_1,\ldots,i_n\in\{1,\ldots,k\}$,  it follows from $(\dagger)$ that
$$
\epsilon_{i_n,i_1}=f(x^1_{i_1},x^n_{i_n})\leq f(x^1_{i_1},x^2_{i_2})+\ldots+f(x^{n-1}_{i_{n-1}},x^n_{i_n})=\epsilon_{i_1,i_2}+\ldots+\epsilon_{i_{n-1},i_n}.
$$

$(ii)\Rightarrow(i)$: Assume $(ii)$ holds. By $(\dagger)$, it suffices to prove, by induction on $m$, that $f$ is $m$-transitive for all $m\geq 1$. The case $m=1$ follows immediately by symmetry of $f$. For the induction step, fix $m>1$ and assume that $f$ is $j$-transitive for all $j<m$. Fix an $f$-sequence $\ubar=(u_0,\ldots,u_m)$. We want to show $f(u_0,u_m)\leq f[\ubar]$.\\

\noindent\textit{Claim}: If some proper subsequence of $\ubar$ is an $f$-sequence then $f(u_0,u_m)\leq f[\ubar]$.

\noindent\textit{Proof}: Let $\vbar=(v_0,\ldots,v_j)$ be a proper $f$-subsequence, where $j<m$, $v_0=u_0$, and $v_j=u_m$. For $0\leq t\leq j-1$, set $\ubar_t=\ubar[v_t,v_{t+1}]$. By induction, we have
$$
f(u_0,u_m)=f(v_0,v_j)\leq f(v_0,v_1)+\ldots+f(v_{j-1},v_j)\leq f[\ubar_0]+\ldots+f[\ubar_{j-1}]=f[\ubar].\text{\claim}
$$
~

Suppose $\ubar=(x^{e_0}_{i_0},\ldots,x^{e_m}_{i_m})$ for some $1\leq e_t\leq n$ and $1\leq i_t\leq k$. 

\noindent\textit{Case 1}: $e_s=e_t$ for some $s<t$. We will show that either $\ubar$ is isometric to a triangle in $(\abar^l)_{l<\omega}$, or $\ubar$ contains a proper $f$-subsequence, in which case we apply the claim.

First, if $m=2$ then $\ubar$ is a triangle with at least two points in $\xbar^{e_s}=\xbar^{e_t}$ and all three edges in $\dom(f)$. It follows from the definition of $\dom(f)$ that $\ubar$ is isometric to a triangle in $(\abar^l)_{l<\omega}$. Therefore $f(u_0,u_m)\leq f[\ubar]$ by the triangle inequality. So we assume $m>2$. In the rest of the cases, we find a proper $f$-subsequence of $\ubar$.

Suppose $s=0$ and $t=m$. Then $\vbar=(u_0,u_1,u_m)$ is a proper subsequence of $\ubar$, since $m>2$. Moreover, $\vbar$ is an $f$-sequence by definition of $\dom(f)$. So we may assume that $s=0$ implies $t<m$. 

If $s+1<t$ then, combined with the assumption that $s=0$ implies $t<m$, it follows that $\vbar=(u_0,\ldots,u_s,u_t,\ldots,u_m)$ is a proper subsequence of $\ubar$. Moreover, $\vbar$ is an $f$-sequence since $e_s=e_t$ implies $(u_s,u_t)\in\dom(f)$. So we may assume $t=s+1$.

If $t<m$ then $\vbar=(u_0,\ldots,u_s,u_{t+1},\ldots,u_m)$ is a proper subsequence of $\ubar$. Moreover, $\vbar$ is an $f$-sequence since $e_s=e_t$ implies $(u_s,u_{t+1})\in\dom(f)$. 

Finally, if $t=m$ then $\vbar=(u_0,\ldots,u_{m-2},u_m)$ is a proper subsequence of $\ubar$. Moreover, $\vbar$ is an $f$-sequence since $e_s=e_t$ implies $(u_{m-2},u_m)\in\dom(f)$.

\noindent\textit{Case 2}: $e_s\neq e_t$ for $s\neq t$. Since $\ubar$ is an $f$-sequence, it follows from the definition of $\dom(f)$ that $m=n-1$ and, moreover, there is a permutation $\sigma\in S_n$, which is some power of $(1~2~\ldots~n)$, such that either $(\sigma(e_0),\ldots,\sigma(e_m))=(1,\ldots,n)$ or $(\sigma(e_0),\ldots,\sigma(e_m))=(n,\ldots,1)$. Note that, if $\sigma_*:X\longrightarrow X$ is such that $\sigma_*(x^e_i)=x^{\sigma(e)}_i$, then for all $x,y\in X$, we have $f(x,y)=f(\sigma_*(x),\sigma_*(y))$. Therefore we may assume $(e_0,\ldots,e_m)$ is either $(1,\ldots,n)$ or $(n,\ldots,1)$.

Next, note that $f(u_0,u_m)\leq f[\ubar]$ if and only if $f(u_m,u_0)\leq f[(u_m,u_{m-1},\ldots,u_0)]$. Therefore we may assume $(e_0,\ldots,e_m)=(1,\ldots,n)$, and so $\ubar=(x^1_{i_0},\ldots,x^n_{i_{n-1}})$. By $(ii)$, we have
$$
f(x^1_{i_0},x^n_{i_{n-1}})=\epsilon_{i_{n-1},i_0}\leq\epsilon_{i_0,i_1}+\ldots+\epsilon_{i_{n-2},i_{n-1}}=f[\ubar],
$$
as desired.
\end{proof}

We are now ready prove the main result of this section, a corollary of which will be the desired classification of $\Th(\cU)$ in the $\SOP_n$ hierarchy.

\begin{theorem}\label{cyclicthm} Fix $n\geq 1$.
\begin{enumerate}[$(a)$] 
\item Any indiscernible sequence in $\U$, of tuples of length $n$, is $(n+1)$-cyclic.
\item There is an indiscernible sequence in $\U$, of tuples of length $n$, that is not $n$-cyclic.
\end{enumerate}
\end{theorem}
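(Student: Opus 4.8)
\noindent\emph{Part (a).} I would apply Lemma~\ref{reduction} with tuple length $n$ and cyclic parameter $n+1$: it then suffices to verify condition $(ii)$ of that lemma, namely that $\epsilon_{i_{n+1},i_1}\leq\epsilon_{i_1,i_2}+\ldots+\epsilon_{i_n,i_{n+1}}$ for every choice of $i_1,\ldots,i_{n+1}\in\{1,\ldots,n\}$. The key observation is purely combinatorial: there are $n+1$ indices taking values in an $n$-element set, so by the pigeonhole principle two of them coincide. After relabelling the tuple as $(i_0,\ldots,i_n)$, this says there are $0\leq s<t\leq n$ with $i_s=i_t$, which is exactly the hypothesis of the switch inequality in Lemma~\ref{inds}; that lemma then delivers $\epsilon_{i_n,i_0}\leq\epsilon_{i_0,i_1}+\ldots+\epsilon_{i_{n-1},i_n}$, i.e.\ the required inequality. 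This part should be immediate once the indices are lined up.

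\noindent\emph{Part (b).} For $n=1$ a sequence of singletons at a common positive distance is indiscernible and visibly not $1$-cyclic, so I assume $n\geq 2$. By Lemma~\ref{reduction} it is enough to construct an indiscernible sequence $(\abar^l)_{l<\omega}$ of $n$-tuples in $\U$ whose quantities $\epsilon_{i,j}=d(a^0_i,a^1_j)$ satisfy $\epsilon_{n,1}>\epsilon_{1,2}+\ldots+\epsilon_{n-1,n}$: taking $i_1=1,\ldots,i_n=n$ then makes condition $(ii)$ of that lemma fail, so the sequence is not $n$-cyclic. My plan is to produce such a sequence concretely as a countable metric space and transport it into $\U$ via Proposition~\ref{embed}.

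\noindent\emph{The construction.} Fix $\delta=\frac{1}{2(n+1)}$ and put $X=\{a^l_i:l<\omega,\ 1\leq i\leq n\}$, with $d$ defined to be symmetric and given on $a^l_i,a^m_j$ by $\delta|i-j|$ when $l=m$, and when $l\neq m$ by $\delta|i-j|+2\delta$ if $e\geq f$ and by $\delta|i-j|$ if $e<f$, where $e$ (resp.\ $f$) is the coordinate index of whichever of the two points carries the smaller (resp.\ larger) time index. Thus forward steps that decrease the coordinate (or keep it fixed) incur an extra penalty $2\delta$, while forward steps that increase it do not. By design $d(a^l_i,a^m_j)$ depends only on $i,j$ and the order relation of $l,m$, so once $d$ is known to be a metric, quantifier elimination forces $(\abar^l)_{l<\omega}$ with $\abar^l=(a^l_1,\ldots,a^l_n)$ to be indiscernible. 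A direct calculation gives $\epsilon_{i,i+1}=\delta$ (an increasing step) and $\epsilon_{n,1}=(n+1)\delta$ (the maximal decreasing step), whence $\epsilon_{n,1}-(\epsilon_{1,2}+\ldots+\epsilon_{n-1,n})=2\delta>0$; moreover the diameter of $X$ is $(n+1)\delta=\tfrac12\leq 1$, so $X$ embeds into $\U$ by Proposition~\ref{embed}.

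\noindent\emph{The main obstacle.} Everything above is bookkeeping; the one genuinely computational point is checking that $d$ satisfies the triangle inequality (equivalently, by Lemma~\ref{consistent}, that it is $m$-transitive for all $m$). I would verify this by writing $d=d_L+w$, where $d_L(a^l_i,a^m_j)=\delta|i-j|$ is manifestly a pseudometric and $w\in\{0,2\delta\}$ is the orientation penalty. For a triple with coordinate indices $i_P,i_Q,i_R$ the slack $d_L(P,Q)+d_L(Q,R)-d_L(P,R)$ equals $2\delta$ times the (integer) distance from $i_Q$ to the interval $[\min(i_P,i_R),\max(i_P,i_R)]$; when $i_Q$ lies outside this interval the slack is already at least $2\delta\geq w(P,R)$, and when $i_Q$ lies inside it, a short case analysis on the time-order of $P,Q,R$ shows that at least one of $w(P,Q),w(Q,R)$ equals $2\delta$. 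I expect this triangle-inequality verification to be the crux of the argument, with the indiscernibility and the non-cyclicity following formally from the construction.
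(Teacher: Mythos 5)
Your proof is correct. Part $(a)$ is exactly the paper's argument: pigeonhole on $n+1$ indices in $\{1,\ldots,n\}$ followed by Lemma \ref{inds}$(b)$, fed into Lemma \ref{reduction}. Part $(b)$ follows the same overall strategy as the paper (an explicit indiscernible sequence with an asymmetric ``drift'' so that the cheap one-step distances around a cycle sum to less than the distance the cycle must close up to), but with a different concrete metric. The paper makes increasing-coordinate forward steps expensive, $d(a^k_i,a^l_j)=\frac{j-i+1}{n}$, and decreasing ones cheap, $\frac{i-j}{n}$, then derives the contradiction directly from the chain $c^1_n,c^2_{n-1},\ldots,c^n_1$ against $d(a^0_1,a^1_n)=1$; you invert the orientation, penalizing non-increasing forward steps by $2\delta$, and route the non-cyclicity through Lemma \ref{reduction} instead. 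The real difference in value is that the paper explicitly leaves the triangle inequality for its sequence to the reader as ``routine but tedious,'' whereas your decomposition $d=d_L+w$ with the slack identity $d_L(P,Q)+d_L(Q,R)-d_L(P,R)=2\delta\cdot\operatorname{dist}\bigl(i_Q,[\min(i_P,i_R),\max(i_P,i_R)]\bigr)$ reduces the whole verification to the observation that when $i_Q$ lies in the interval and $w(P,R)=2\delta$, at least one of $w(P,Q),w(Q,R)$ is $2\delta$ (which a quick check of the possible time-orders of $Q$ relative to $P$ and $R$ confirms). This is a genuinely cleaner organization of the one step the paper omits. Your separate treatment of $n=1$ is also fine, since Lemma \ref{reduction} is only stated for $n\geq 2$.
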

\begin{proof}
Part $(a)$: Suppose $(\abar^l)_{l<\omega}$ is an indiscernible sequence in $\U$, with $\ell(\abar^0)=n$. To show that $(\abar^l)_{l<\omega}$ is $(n+1)$-cyclic, we use the characterization of Lemma \ref{reduction}. If $i_0,\ldots,i_n\in\{1,\ldots,n\}$ then there are $0\leq s<t\leq n$ such that $i_s= i_t$. By Lemma \ref{inds}$(\ref{switch})$, we have $\epsilon_{i_n,i_0}\leq\epsilon_{i_0,i_1}+\ldots+\epsilon_{i_{n-1},i_n}$.

Part $(b)$: We construct $(\abar^l)_{l<\omega}$, where $\ell(\abar^l)=n$, and define distances as follows. Given $k\leq l<\omega$,
$$
d(a^k_i,a^l_j)=
\begin{cases}
\frac{j-i+1}{n}&\textnormal{ if $k<l$ and $i\leq j$, or $k=l$ and $i<j$}\\
\frac{i-j}{n} &\textnormal{ if $k<l$ and $i>j$.}
\end{cases}
$$
We leave it to the reader to verify that this sequence satisfies the triangle (the verification is entirely routine, but very tedious). Therefore, we may view $(\abar^l)_{l<\omega}$ as an indiscernible sequence in $\U$. It remains to show that this sequence is not $n$-cyclic. Let $p(\xbar,\ybar)=\tp(\abar^0,\abar^1)$ and suppose, toward a contradiction, that $p(\xbar^1,\xbar^2)\cup\ldots\cup p(\xbar^{n-1},\xbar^n)\cup p(\xbar^n,\xbar^1)$ is satisfied by some $(\cbar^1,\ldots,\cbar^n)$. Note that, for $1\leq i<n$, we have $d(a^0_{i+1},a^1_i)=\textstyle\frac{i+1-i}{n}=\frac{1}{n}$. Therefore, for $1\leq i<n$, we have $d(c^{n-i}_{i+1},c^{n-i+1}_i)=\frac{1}{n}$. This means
$$
d(c^1_n,c^n_1)\leq d(c^1_n,c^2_{n-1})+d(c^2_{n-1},c^3_{n-2})+\ldots+d(c^{n-1}_2,c^n_1)=\textstyle\frac{n-1}{n}.
$$
But this is a contradiction, since $d(a^0_1,a^1_n)=\frac{n-1+1}{n}=1$, and so we must have $``d(x^n_1,x^1_n)=1"\in p(\xbar^n,\xbar^1)$.
\end{proof}

Applying Proposition \ref{SOP}, we obtain the desired classification of $\Th(\cU)$ in the $\SOP_n$-hierarchy.

\begin{corollary}
$\Th(\cU)$ is $\NSOP_\infty$, and $\SOP_n$ for all $n\geq 3$.
\end{corollary}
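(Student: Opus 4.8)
The plan is to read this corollary off directly from Theorem \ref{cyclicthm}, using Proposition \ref{SOP} as the translation between $n$-cyclicity of indiscernible sequences and the strong order properties. There are two assertions to verify, and each follows from one part of Theorem \ref{cyclicthm} matched with the corresponding part of Proposition \ref{SOP}.

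For the positive direction, fix $n \geq 3$. By Theorem \ref{cyclicthm}$(b)$ there is an indiscernible sequence in $\U$, of tuples of length $n$, that is not $n$-cyclic. Proposition \ref{SOP}$(a)$ then applies verbatim: the existence of an indiscernible sequence that is not $n$-cyclic is equivalent to $\SOP_n$. Hence $\Th(\cU)$ has $\SOP_n$, and since $n \geq 3$ was arbitrary, it has $\SOP_n$ for all $n \geq 3$.

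For the negative direction, I would argue by contradiction. Suppose $\Th(\cU)$ has $\SOP_\infty$. By Proposition \ref{SOP}$(b)$ there is an indiscernible sequence of \emph{finite} tuples in $\U$, say of length $m$, that is not $n$-cyclic for any $n \geq 1$. But Theorem \ref{cyclicthm}$(a)$ guarantees that every indiscernible sequence of tuples of length $m$ is $(m+1)$-cyclic. Taking $n = m+1$ yields the contradiction, so $\Th(\cU)$ is $\NSOP_\infty$.

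The argument itself is short, so the real content lives in Theorem \ref{cyclicthm}; the one point worth emphasizing is why the two halves do not conflict. The crucial feature is the dependence of cyclicity on tuple length: Theorem \ref{cyclicthm}$(b)$ exhibits non-$n$-cyclic sequences only at the matched length $n$, whereas Theorem \ref{cyclicthm}$(a)$ forces any sequence of a \emph{fixed} finite length $m$ to become $(m+1)$-cyclic. Thus the failure of $n$-cyclicity cannot persist for all $n$ once the tuple length is bounded, which is exactly what the finiteness clause in Proposition \ref{SOP}$(b)$ supplies, and this is precisely the gap between $\SOP_n$ for each $n$ and the fully finite $\SOP_\infty$.
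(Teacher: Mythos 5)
Your proof is correct and is exactly the paper's argument: the paper derives this corollary by applying Proposition \ref{SOP} to Theorem \ref{cyclicthm}, with part $(b)$ of each giving $\SOP_n$ for all $n\geq 3$ and part $(a)$ of each (via the contrapositive of Proposition \ref{SOP}$(b)$) giving $\NSOP_\infty$. Your closing remark about the role of tuple length correctly identifies why the two halves are compatible.
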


As a final remark concerning dividing lines, we consider $\TP_2$. 

\begin{theorem}
$\Th(\cU)$ has $\TP_2$.
\end{theorem}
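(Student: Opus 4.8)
The plan is to witness $\TP_2$ with the formula $\vphi(x,y)=d(x,y)\dotminus\frac14$, so that the condition ``$\vphi(x,y)=0$'' asserts exactly $d(x,y)\le\frac14$, together with the integer $k=2$ and an array $(\abar_{i,j})_{i,j<\omega}$ of \emph{singletons} whose distances are arranged so that points sharing a row are far apart while points in distinct rows are close. Concretely, I would declare $d(\abar_{i,j},\abar_{i,j'})=1$ whenever $j\ne j'$ (same row) and $d(\abar_{i,j},\abar_{i',j'})=\frac12$ whenever $i\ne i'$ (distinct rows). The first step is to check that this prescription really is a metric of diameter $\le 1$: every triangle has either all three vertices in one row (equilateral of side $1$), all three in distinct rows (equilateral of side $\frac12$), or two in one row and one outside, in which case the side lengths $1,\frac12,\frac12$ satisfy the triangle inequality. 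Since the point set is countable, Proposition \ref{embed} then produces an isometric copy inside $\U$, which furnishes the array.

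Next I would verify condition $(ii)$, the $k$-inconsistency of rows. If a single $x$ satisfied both $d(x,\abar_{n,i})\le\frac14$ and $d(x,\abar_{n,i'})\le\frac14$ for $i\ne i'$, then the triangle inequality would force $d(\abar_{n,i},\abar_{n,i'})\le\frac12$, contradicting $d(\abar_{n,i},\abar_{n,i'})=1$. Hence no element realizes two conditions from a common row, so $\{\vphi(x,\abar_{n,i})=0:i<\omega\}$ is $2$-unsatisfiable for each $n$. For condition $(i)$, fix a transversal $\sigma\in\omega^\omega$ and consider the points $p_n=\abar_{n,\sigma(n)}$, which lie in distinct rows and are therefore pairwise at distance $\frac12$. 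I would adjoin a single new point $x$ with $d(x,p_n)=\frac14$ for all $n$; the only triangle inequalities to verify involve $x,p_n,p_m$, where $d(p_n,p_m)=\frac12=\frac14+\frac14$, so the enlarged space is a consistent metric space. Applying Proposition \ref{embed} once more over the countable parameter set $\{p_n:n<\omega\}$ realizes such an $x$ in $\U$, showing that $\{\vphi(x,\abar_{n,\sigma(n)})=0:n<\omega\}$ is satisfiable. Together these give $\TP_2$.

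Every computation here is an elementary triangle-inequality check, so I do not expect a serious technical obstacle; the only genuine design decision is the choice of radius. The crux is that the ``width'' $\frac14$ can be picked so that $2\cdot\frac14=\frac12$ sits strictly below the within-row distance $1$ while being permitted to meet the cross-row distance: the first inequality makes the rows $2$-inconsistent and the second keeps every transversal consistent. Any radius $r$ for which the cross-row distance is $\le 2r$ and is strictly smaller than the within-row distance would serve equally well, and one could replace the degenerate equality $d(p_n,p_m)=2r$ by a strict inequality to avoid collinear triangles if desired.
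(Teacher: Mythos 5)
Your proof is correct and follows essentially the same route as the paper's: an array of singletons with within-row distance $1$ and a smaller constant cross-row distance, with $2$-inconsistency of rows and satisfiability of transversals both reduced to elementary triangle-inequality checks. The only differences are the choice of constants ($\tfrac12$ and $\tfrac14$ in place of the paper's $\tfrac23$ and $\tfrac13$) and your use of the condition $d(x,y)\le\tfrac14$ where the paper uses $d(x,y)=\tfrac13$, neither of which is material.
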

\begin{proof}
We define the array $(a_{i,j})_{i,j<\omega}$ such that 
$$
d(a_{m,i},a_{n,j})=
\begin{cases} 
1 &\textrm{ if } m=n,~i\neq j\\ 
\frac{2}{3} &\textrm{ if }m\neq n
\end{cases}
$$
Note that any nontrivial triangle in this array has sides with distances from $\{\frac{2}{3},1\}$. Therefore the array satisfies the triangle inequality. If $\sigma\in\omega^\omega$ then the distance between any two distinct elements of $\{a_{n,\sigma(n)}:n<\omega\}$ is $\frac{2}{3}$. Therefore $\{d(x,a_{n,\sigma(n)})=\frac{1}{3}:n<\omega\}$ is satisfiable. On the other hand, if $n<\omega$ and $i<j<\omega$, then $d(a_{n,i},a_{n,j})=1$. Therefore, for any $n<\omega$, $\{d(x,a_{n,i})=\frac{1}{3}:i<\omega\}$ is $2$-unsatisfiable.
\end{proof}

\subsection{Dividing in $\Th(\cU)$}\label{divchar}

We now turn to the question of forking and dividing in $\Th(\cU)$. In this section, we characterize dividing for complete types. We will later show that this characterization satisfies condition $(iii)$ of Theorem \ref{ext reduct}. As a result, forking and dividing are the same for complete types, and we will have given a purely combinatorial characterization of both notions of independence. 

Toward the characterization of dividing in $\Th(\cU)$, we begin with the following strengthening of finite character (Fact \ref{basic}).

\begin{lemma}\label{to one}
 Given $A,B,C\subset\U$, $A\textstyle\ind^d_C B$ if and only if $a\ind^d_C b_1b_2$ for all $a\in A$ and $b_1,b_2\in B$.
\end{lemma}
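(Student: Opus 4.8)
The plan is to prove both directions, with the forward implication immediate and the reverse containing all the work. For the forward direction, if $A\ind^d_C B$ then, since $a$ is a finite tuple from $A$ and $b_1b_2$ a finite tuple from $B$, Fact \ref{basic} gives $a\ind^d_C b_1b_2$ at once.

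For the reverse, assume $a\ind^d_C b_1b_2$ for all $a\in A$ and $b_1,b_2\in B$. By Fact \ref{basic} it suffices to show $\abar\ind^d_C\bbar$ for all finite $\abar\in A$ and $\bbar\in B$, so suppose toward a contradiction that $\abar\nind^d_C\bbar$. By Theorem \ref{FANDD}$(a)$, fix a $C$-indiscernible sequence $(\bbar^l)_{l<\omega}$ with $\bbar^0=\bbar$ such that $\bigcup_l\tp(\abar/\bbar^l C)$ is unsatisfiable. By Proposition \ref{embed} and Lemma \ref{consistent}, this unsatisfiability means that the partial semimetric $f$ obtained by adjoining a tuple $\abar'=(a'_s)$ to the (already metrized) set $F=C\cup\bigcup_l\bbar^l$ fails to be $m$-transitive for some $m$, where the prescribed distances are $f(a'_s,a'_t)=d(a_s,a_t)$, $f(a'_s,c)=d(a_s,c)$, and $f(a'_s,b^l_i)=d(a_s,b_i)$.

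The next step is to extract from the pair-hypothesis a family of triangle inequalities. For fixed entries $i,j$ of $\bbar$, the sequence $(b^l_i,b^l_j)_{l<\omega}$ is a $C$-indiscernible sequence of copies of $(b_i,b_j)$. Since $a_s\ind^d_C b_ib_j$, Theorem \ref{FANDD}$(a)$ shows $\bigcup_l\tp(a_s/(b^l_ib^l_j)C)$ is satisfiable, so its associated partial semimetric is consistent; in particular its triangle inequalities hold, giving
$$
|d(a_s,b_i)-d(a_s,b_j)|\leq d(b^l_i,b^m_j)\leq d(a_s,b_i)+d(a_s,b_j)
$$
for all $s$, all entries $i,j$, and all $l,m<\omega$.

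It then remains to show that these inequalities, together with the consistency of the base type $\tp(\abar\bbar/C)$, force $f$ to be $m$-transitive for all $m$, contradicting the failure above. This is a pure metric computation carried out exactly along the lines of the proof of Lemma \ref{reduction}: one inducts on path length, uses completeness of $F$ to collapse consecutive points of $F$, and reduces every potential violation to a short configuration. Configurations with a single new point $a'_s$ and two points $b^l_i,b^m_j$ are handled directly by the displayed inequalities. The only genuinely new configuration involves two distinct new points, namely the path $a'_s\to b^l_i\to b^m_j\to a'_t$, and here the crucial estimate is
$$
d(a_s,a_t)\leq d(a_s,b_i)+d(a_t,b_i)\leq d(a_s,b_i)+d(b^l_i,b^m_j)+d(a_t,b_j),
$$
where the first inequality is the triangle inequality through $b_i$ in the consistent base type and the second uses the bound $d(a_t,b_i)-d(a_t,b_j)\leq d(b^l_i,b^m_j)$ from the displayed family applied to $a_t$. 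The main obstacle is organizing the induction so that an arbitrary violating path is systematically reduced to these base configurations; this is the metric analog of the case analysis in Lemma \ref{reduction}, and I expect it to be the most technical part of the argument.
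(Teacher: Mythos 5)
Your overall strategy is the same as the paper's: reduce to finite tuples by Fact \ref{basic}, take an indiscernible witness of dividing via Theorem \ref{FANDD}$(a)$, view the resulting unsatisfiable type as an inconsistent semimetric space, and use the pairwise hypothesis to control the distances $d(b^l_i,b^m_j)$. Your displayed family of inequalities is exactly the right consequence of $a_s\ind^d_C b_ib_j$, and the paper's proof rests on the same triangles $\{x, b^l_i, b^m_j\}$.

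However, you have left the decisive step unfinished, and you have substantially overestimated its difficulty by modelling it on Lemma \ref{reduction}. The semimetric you build is not genuinely partial: every pair among $\abar'\cup C\cup\bigcup_l\bbar^l$ has a prescribed distance (distances within the base set are real, and $f(a'_s,a'_t)$, $f(a'_s,c)$, $f(a'_s,b^l_i)$ are all specified by the type). For a \emph{total} symmetric function vanishing on the diagonal, being a pseudometric is equivalent to $2$-transitivity alone --- the triangle inequality on triples implies $m$-transitivity for every $m$ by the usual induction --- so no path-length induction in the style of Lemma \ref{reduction} is needed. The entire ``most technical part'' you flag therefore evaporates: one only lists the possible triples, observes that by $C$-indiscernibility every triple except those of the form $\{a'_s,b^l_i,b^m_j\}$ with $l\neq m$ is isometric to an actual triple in $\U$ (e.g.\ $\{a'_s,a'_t,b^l_i\}$ realizes the distances of $\{a_s,a_t,b_i\}$, and $\{a'_s,b^l_i,c\}$ those of $\{a_s,b_i,c\}$), and the remaining triples are exactly the ones your displayed inequalities handle. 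This is precisely how the paper argues, in contrapositive form: any failure of the triangle inequality must occur on a triple $\{x_i,b^l_j,b^m_k\}$, and such a failure immediately witnesses $a_i\nind^d_C b_jb_k$ via the indiscernible sequence $(b^l_j,b^l_k)_{l<\omega}$. Your extra analysis of the four-point path $a'_s\to b^l_i\to b^m_j\to a'_t$ is correct but superfluous once totality is noted. So: right approach, correct key inequalities, but the proof as written is incomplete until you either carry out the announced induction or, much better, replace it with the observation that only triangles need to be checked.
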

\begin{proof}
$(\Rightarrow)$: Follows from Fact \ref{basic}.

$(\Leftarrow$): Let $\abar$ enumerate $A$ and $\bbar$ enumerate $B$. Suppose $\tp(\abar/BC)$ divides over $C$. Then there is a $C$-indiscernible sequence $(\bbar^l)_{l\in \omega}$, with $\bbar^0=\bbar$, such that if $p(\xbar,\ybar) = \tp(\abar,\bbar/C)$, then $\bigcup_{l< \omega} p(\xbar,\bbar^{l})$ is unsatisfiable. In other words, the following is unsatisfiable:
\begin{multline*}
\{d(x_{i}, x_{j})=d(a_{i}, a_{j}):a_i,a_j\in\abar\} \cup \{d(x_i,c)=d(a_i,c):a_i\in\abar,~c\in C\}\\
\cup \{d(x_{i}, b_{j}^{l}) = d(a_{i}, b_{j}): a_i\in\abar,~b_i\in\bbar,~l< \omega \} \cup \tp((\bbar^l)_{l<\omega}/C).
\end{multline*}

So there is a failure of the triangle inequality among three points in this type. By indiscernibility, the only possible failures are between three points of the form $\{x_i,b^l_j,b^m_k\}$ for some $a_i\in\abar$ and $b_j,b_k\in\bbar$. In this case, if $q(x,b_j,b_i)=\tp(a_i,b_j,b_k/C)$ then $\bigcup_{n<\omega}q(x,b^n_j,b^n_k)$ is unsatisfiable. Therefore $a_i\nind^d_C b_jb_k$, as desired.
\end{proof}

Next, we show explicitly how indiscernible sequences control dividing.

\begin{definition}
Fix $b_1,b_2\in\U$ and $C\subset\U$. Let $\bbar=(b_1,b_2)$. Define the sets
\begin{align*}
\IS(b_1,b_2/C) &= \{(\bbar^l)_{l<\omega}:\text{$\bbar^0\equiv_C\bbar$ and $(\bbar^l)_{l<\omega}$ is $C$-indiscernible}\},\\
\Gamma(b_1,b_2/C) &= \{d(b^0_1,b^1_2):(\bbar^l)_{l<\omega}\in\IS(b_1,b_2/C)\}.
\end{align*}
\end{definition}

\begin{proposition}\label{ind commute}
If $b_1,b_2\in\U$ and $C\subset\U$ then $\Gamma(b_1,b_2/C)=\Gamma(b_2,b_1/C)$.
\end{proposition}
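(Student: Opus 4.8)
The plan is to prove both inclusions $\Gamma(b_1,b_2/C)\seq\Gamma(b_2,b_1/C)$ and $\Gamma(b_2,b_1/C)\seq\Gamma(b_1,b_2/C)$; since the two sets are defined by the same recipe with the roles of $b_1$ and $b_2$ interchanged, it suffices to prove one inclusion, say the first. So I fix $r\in\Gamma(b_1,b_2/C)$, witnessed by some $(\bbar^l)_{l<\omega}\in\IS(b_1,b_2/C)$ with $\bbar^l=(b^l_1,b^l_2)$ and $d(b^0_1,b^1_2)=r$, and I aim to produce a sequence in $\IS(b_2,b_1/C)$ whose relevant cross-distance is again $r$.

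The key construction applies two symmetries at once: it reverses the index order and transposes the two coordinates in every term. Concretely, I first extend $(\bbar^l)_{l<\omega}$ to a $C$-indiscernible sequence $(\bbar^l)_{l\in\mathbb{Z}}$ with the same EM-type (a standard compactness argument), and then set $\cbar^l=(b^{-l}_2,b^{-l}_1)$ for $l<\omega$, writing $\cbar^l=(c^l_1,c^l_2)$. I claim $(\cbar^l)_{l<\omega}\in\IS(b_2,b_1/C)$ and $d(c^0_1,c^1_2)=r$, which yields $r\in\Gamma(b_2,b_1/C)$.

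To verify the claim I would check three things. First, $(\cbar^l)_{l<\omega}$ is $C$-indiscernible: the order-reversing reindexing $l\mapsto -l$ preserves indiscernibility, as does applying the fixed coordinate transposition to every term, so their composite does too. Second, $\cbar^0\equiv_C(b_2,b_1)$: every term of the original sequence realizes $\tp(b_1,b_2/C)$, so $(b^0_1,b^0_2)\equiv_C(b_1,b_2)$ and hence $(b^0_2,b^0_1)\equiv_C(b_2,b_1)$, i.e. $\cbar^0\equiv_C(b_2,b_1)$. Third, the cross-distance is correct: $d(c^0_1,c^1_2)=d(b^0_2,b^{-1}_1)=d(b^{-1}_1,b^0_2)$, and by indiscernibility the pair $(\bbar^{-1},\bbar^0)$ has the same type over $C$ as $(\bbar^0,\bbar^1)$, so $d(b^{-1}_1,b^0_2)=d(b^0_1,b^1_2)=r$. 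This is the same ``switching'' phenomenon already exploited in Lemma \ref{inds}$(\ref{switch})$, now obtained through reversal rather than a triangle inequality.

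I do not expect a serious obstacle, since the statement is essentially a symmetry of this construction. The only points requiring care are the index/coordinate bookkeeping in the third step and the explicit invocation of the two standard facts being used: that a $C$-indiscernible sequence extends to a two-sided ($\mathbb{Z}$-indexed) one with the same EM-type, and that both reversal of the index order and a uniform permutation of coordinates preserve $C$-indiscernibility. Once these are in place, the equality $\Gamma(b_1,b_2/C)=\Gamma(b_2,b_1/C)$ follows at once.
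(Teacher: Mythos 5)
Your proof is correct and follows essentially the same route as the paper: both arguments extend the given sequence so that a reverse-ordered copy is available (you use a $\mathbb{Z}$-indexing, the paper uses $\omega+\omega^*$), then simultaneously reverse the order and transpose the two coordinates, and read off the cross-distance via indiscernibility. The bookkeeping in your third step checks out, so there is nothing to add.
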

\begin{proof}
It suffices to show $\Gamma(b_2,b_1/C)\seq\Gamma(b_1,b_2/C)$. Suppose $(\bbar^l)_{l<\omega}\in\IS(b_2,b_1/C)$. We want to show $d(b^0_2,b^1_1)\in\Gamma(b_1,b_2/C)$. Let $\omega^*=\{l^*:l<\omega\}$, ordered so that $l^*>(l+1)^*$. By compactness we may assume the sequence is indexed $(\bbar^l)_{l\in I}$, where $I=\omega+\omega^*$. Define the sequence $(\abar^l)_{l<\omega}$ such that $\abar^l=(b^{l^*}_1,b^{l^*}_2)$. Then $(\abar^l)_{l<\omega}$ is $C$-indiscernible and $\abar^0\equiv_C(b_1,b_2)$. Therefore $(\abar^l)_{l<\omega}\in\IS(b_1,b_2/C)$ and so
$$
d(b^0_2,b^1_1)=d(b^1_1,b^0_2)=d(b^{0^*}_1,b^{1^*}_2)=d(a^0_1,a^1_2)\in\Gamma(b_1,b_2/C),
$$
as desired.
\end{proof}

\begin{lemma}\label{lemma4}
 Suppose $a,b_1,b_2\in\U$ and $C\subset\U$. Then $a\ind^d_C b_1b_2$ if and only if for all $i,j\in\{1,2\}$,
 $$
 d(a,b_i)+d(a,b_j)\geq \sup\Gamma(b_i,b_j/C) \text{ and } |d(a,b_i)-d(a,b_j)|\leq \inf\Gamma(b_i,b_j/C).
 $$
 \end{lemma}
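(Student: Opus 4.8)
My plan is to rephrase dividing via Theorem \ref{FANDD}$(a)$ and then use quantifier elimination to reduce satisfiability of the associated type to a family of triangle inequalities governed by $\Gamma$. Writing $\bbar=(b_1,b_2)$, $\delta_i=d(a,b_i)$, and $p(x,\ybar)=\tp(a,\bbar/C)$, Theorem \ref{FANDD}$(a)$ says that $\tp(a/b_1b_2C)$ divides over $C$ exactly when some $C$-indiscernible sequence $(\bbar^l)_{l<\omega}$ with $\bbar^0=\bbar$ makes $\bigcup_{l<\omega}p(x,\bbar^l)$ unsatisfiable. For a fixed such sequence this type simply prescribes the distances from the single new point $x$ to the metric space $S\cup C$, where $S=\bigcup_l\bbar^l$, namely $d(x,b^l_i)=\delta_i$ and $d(x,c)=d(a,c)$. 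By quantifier elimination together with Proposition \ref{embed}, it is satisfiable iff these distances extend the metric on $S\cup C$ to a metric on $S\cup C\cup\{x\}$, i.e. iff $|d(x,u)-d(x,v)|\leq d(u,v)\leq d(x,u)+d(x,v)$ for all $u,v\in S\cup C$. The triangles avoiding $x$, and those of the form $\{x,b^l_i,b^l_j\}$ or $\{x,b^l_i,c\}$, are isometric to configurations already realized by $a,b_1,b_2,C$ in $\U$, so they hold automatically; thus everything reduces to the triangles $\{x,b^l_i,b^m_j\}$ with $l\neq m$. For these, $d(b^l_i,b^m_j)$ equals $d(b^0_i,b^1_j)$ or $d(b^0_j,b^1_i)$ and so lies in $\Gamma(b_i,b_j/C)$, where I use Proposition \ref{ind commute} to identify $\Gamma(b_j,b_i/C)=\Gamma(b_i,b_j/C)$.

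Granting this reduction, the direction $(\Leftarrow)$ is quick: if the stated inequalities hold, then for every $C$-indiscernible sequence with $\bbar^0=\bbar$ and every nontrivial triangle $\{x,b^l_i,b^m_j\}$, the distance $d(b^l_i,b^m_j)\in\Gamma(b_i,b_j/C)$ is squeezed between $\inf\Gamma(b_i,b_j/C)\geq|\delta_i-\delta_j|$ and $\sup\Gamma(b_i,b_j/C)\leq\delta_i+\delta_j$, so the triangle inequality holds and $\bigcup_l p(x,\bbar^l)$ is satisfiable. Since no such sequence witnesses dividing, $a\ind^d_C b_1b_2$.

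For $(\Rightarrow)$ I would argue by contraposition. When $i\neq j$ and $\delta_i+\delta_j<\sup\Gamma(b_i,b_j/C)$, I pick $\gamma\in\Gamma(b_1,b_2/C)$ with $\gamma>\delta_1+\delta_2$ and, after an automorphism, realize it by a sequence with $\bbar^0=\bbar$ and $d(b^0_1,b^1_2)=\gamma$; the triangle $\{x,b^0_1,b^1_2\}$ then violates $d(b^0_1,b^1_2)\leq\delta_1+\delta_2$, so $\tp(a/b_1b_2C)$ divides. The case $|\delta_i-\delta_j|>\inf\Gamma(b_i,b_j/C)$ is the same, now choosing $\gamma<|\delta_1-\delta_2|$. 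The case $i=j$ is the main obstacle: the only nontrivial condition is $2\delta_i\geq\sup\Gamma(b_i,b_i/C)$, but its witness lives in $\Gamma(b_i,b_i/C)$, which is defined by single-coordinate sequences, and building a dividing witness for the pair $(b_1,b_2)$ would require realizing that value inside a pair sequence, which is not immediate. I expect the clean way around this is to reduce to one variable: running the same reduction for $\tp(a/b_iC)$ shows $a\ind^d_C b_i$ iff $2\delta_i\geq\sup\Gamma(b_i,b_i/C)$, so a failure of this inequality gives $a\nind^d_C b_i$ and hence $a\nind^d_C b_1b_2$ by finite character (Fact \ref{basic}). The steps needing the most care are the metric one-point extension fact underlying the reduction and the bookkeeping that identifies each cross-distance $d(b^l_i,b^m_j)$ with an element of the appropriate $\Gamma$.
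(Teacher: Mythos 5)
Your proposal is correct and follows essentially the same route as the paper: both reduce dividing to satisfiability of the metric type via Theorem \ref{FANDD}$(a)$ and quantifier elimination, observe that the only triangles not already realized by $a,b_1,b_2,C$ are those of the form $\{x,b^l_i,b^m_j\}$ with $l\neq m$, identify the cross-distances as elements of $\Gamma(b_i,b_j/C)$ using Proposition \ref{ind commute}, and prove the forward direction by contraposition by realizing an offending value of $\gamma$ in an indiscernible sequence. Your explicit handling of the $i=j$ case by passing to $a\nind^d_C b_i$ and invoking finite character is exactly what the paper does implicitly when it concludes $a\nind^d_C b_ib_j$, so this is a point of extra care rather than a different argument.
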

 \begin{proof}
 $(\Leftarrow)$:   Suppose $a\nind^d_C \bbar$, where $\bbar=(b_1,b_2)$.  Let $p(x,\ybar)$ denote $\tp(a,\bbar/C)$.  Then there is an indiscernible sequence $(\bbar^l)_{l< \omega}\in\IS(b_1,b_2/C)$ such that $\bigcup_{l<\omega} p(x, \bbar^l)$ is unsatisfiable, and therefore contains some failure of the triangle inequality. The possible failures are:
 \begin{enumerate}
 \item $d(b^l_i, b^m_j)>d(a,b_i)+d(a,b_j)$ for some $i,j\in\{1,2\}$ and $l,m<\omega$;
\item $d(a,b_j)>d(a,b_i)+d(b^l_i,b^m_j)$ for some $i,j\in\{1,2\}$ and $l,m<\omega$.
\end{enumerate}
In either case, since $\bbar^l\equiv_C\bbar\equiv_C\bbar^m$, it follows that $l\neq m$.  By Proposition \ref{ind commute}, we have $d(b^l_i,b^m_j)\in\Gamma(b_i,b_j/C)$. Therefore $\inf\Gamma(b_i,b_j/C)\leq d(b^l_i,b^m_j)\leq \sup\Gamma(b_i,b_j/C)$. If (1) holds then $d(a,b_i) + d(a,b_j)<d(b_i^l, b_j^m)\leq \sup\Gamma(b_i,b_j/C)$. If (2) holds then $|d(a,b_j) - d(a,b_i)|>d(b_i^l,b_j^m)\geq \inf\Gamma(b_i,b_j/C)$.

$(\Rightarrow)$: We again set $\bbar=(b_1,b_2)$ and $p(x,\ybar)=\tp(a,\bbar/C)$. First suppose there are $i,j\in\{1,2\}$ such that $d(a,b_i) + d(a,b_j) < \sup\Gamma(b_i,b_j/C)$. By definition, there is $(\bbar^l)_{l< \omega}\in\IS(b_i,b_j/C)$ and some $r>d(a,b_i)+d(a,b_j)$ such that for all $l<m<\omega$, $d(b_i^l,b_j^m)=r$.  Then
$$
\{d(x,b^0_i) = d(a,b_i),~d(x,b^1_j) = d(a,b_j),~d(b_i^0,b^j_1)=r)\}\seq\bigcup_{l<\omega}p(x,\bbar^l),
$$
and so $\bigcup_{l<\omega}p(x,\bbar^l)$ is unsatisfiable. Therefore $a\nind^d_C b_ib_j$.

Finally, suppose there are $i,j\in\{1,2\}$ such that $|d(a,b_i) - d(a,b_j)| > \inf\Gamma(b_i,b_j/C)$. By definition, there is $(\bbar^l)_{l< \omega}\in\IS(b_i,b_j/C)$ and some $r<|d(a,b_i)-d(a,b_j)|$ such that for all $l<m<\omega$, $d(b_i^l,b_j^m)=r$.  Once again, this implies $\bigcup_{l<\omega}p(x,\bbar^l)$ is unsatisfiable, and so $a\nind^d_C b_ib_j$.
\end{proof}

By the previous results, a complete characterization of dividing in the Urysohn sphere rests on understanding the possible values for $d(b^0_1,b^1_2)$ in sequences $(\bbar^l)_{l<\omega}\in\IS(b_1,b_2/C)$. To this end, we define the following distance calculations.

\begin{definition}
Fix $b_1,b_2\in\U$ and $C\subset\U$. Define the values
\begin{align*}
d_{\max}(b_1,b_2/C) &= \inf_{c\in C}(d(b_1,c)+d(b_2,c)),\\
d_{\min}(b_1,b_2/C) &= \max\left\{\sup_{c\in C}|d(b_1,c)-d(b_2,c)|,\textstyle\frac{1}{3}\displaystyle d(b_1,b_2)\right\}.
\end{align*}
\end{definition}

The following properties of these notions will be extremely useful.

\begin{lemma}\label{forkprops} Fix $b_1,b_2,b_3\in \U$ and $C\subset\U$. 
\begin{enumerate}[$(a)$]
\item $d_{\max}(b_1,b_2/C)\leq d_{\max}(b_1,b_3/C)+\sup_{c\in C}|d(b_2,c)-d(b_3,c)|$.
\item $d_{\min}(b_1,b_2/C)\leq d_{\min}(b_1,b_3/C)+d_{\min}(b_2,b_3/C)$.
\end{enumerate}
\end{lemma}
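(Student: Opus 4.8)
The plan is to prove both inequalities by unwinding the definitions of $d_{\max}$ and $d_{\min}$ and reducing each to an elementary pointwise estimate holding for every $c\in C$, then passing to the appropriate suprema and infima. No indiscernible sequences or model theory are needed here; this is a purely metric bookkeeping lemma.

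For part $(a)$, I would begin from the trivial pointwise bound, valid for each $c\in C$,
$$
d(b_1,c)+d(b_2,c)\leq d(b_1,c)+d(b_3,c)+|d(b_2,c)-d(b_3,c)|,
$$
which is just $d(b_2,c)\leq d(b_3,c)+|d(b_2,c)-d(b_3,c)|$ with $d(b_1,c)$ added. Writing $S=\sup_{c\in C}|d(b_2,c)-d(b_3,c)|$, the right-hand side is at most $d(b_1,c)+d(b_3,c)+S$. Since $d_{\max}(b_1,b_2/C)=\inf_{c'\in C}(d(b_1,c')+d(b_2,c'))$ is below the left-hand side, we get $d_{\max}(b_1,b_2/C)\leq d(b_1,c)+d(b_3,c)+S$ for every $c$, and taking the infimum over $c$ on the right yields $d_{\max}(b_1,b_2/C)\leq d_{\max}(b_1,b_3/C)+S$, which is the claim. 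The only care needed is that, because $y\mapsto y+S$ (truncated at $1$) is monotone and continuous and truncated addition is associative, the infimum commutes with adding $S$.

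For part $(b)$, since $d_{\min}(b_1,b_2/C)$ is the maximum of two terms, I would bound each of them by the right-hand side $d_{\min}(b_1,b_3/C)+d_{\min}(b_2,b_3/C)$. For the supremum term, the triangle inequality for absolute values gives, for each $c$,
$$
|d(b_1,c)-d(b_2,c)|\leq |d(b_1,c)-d(b_3,c)|+|d(b_3,c)-d(b_2,c)|,
$$
and each summand is at most the corresponding supremum, hence at most $d_{\min}(b_1,b_3/C)$ and $d_{\min}(b_2,b_3/C)$ respectively; taking $\sup_c$ gives the desired bound. For the term $\frac{1}{3}d(b_1,b_2)$, the metric triangle inequality $d(b_1,b_2)\leq d(b_1,b_3)+d(b_3,b_2)$ gives $\frac{1}{3}d(b_1,b_2)\leq \frac{1}{3}d(b_1,b_3)+\frac{1}{3}d(b_2,b_3)$, and each of these is below the relevant $d_{\min}$ by definition. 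Both terms being bounded, so is their maximum.

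The main obstacle is very mild: it is simply the diameter-$1$ truncation convention, under which the sums defining $d_{\max}$ and appearing on the right-hand sides of both inequalities are truncated at $1$. This causes no trouble because every quantity in sight is at most $1$, so whenever an untruncated estimate would exceed $1$ the truncated right-hand side equals $1$ and dominates trivially, while otherwise no truncation occurs and the plain arithmetic above applies verbatim. I would flag this once and not belabor it; the convention $\inf\emptyset=1$, $\sup\emptyset=0$ also makes the degenerate case $C=\emptyset$ work automatically.
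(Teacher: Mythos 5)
Your proof is correct and follows essentially the same route as the paper's: part $(a)$ via the pointwise estimate $d(b_2,c)\leq d(b_3,c)+|d(b_2,c)-d(b_3,c)|$ followed by taking the infimum over $c$, and part $(b)$ by bounding each of the two terms in the maximum separately (the paper dismisses the $\tfrac{1}{3}d(b_1,b_2)$ case as ``clear''; you supply the one-line triangle-inequality argument it has in mind). Your remarks on truncated addition and the $C=\emptyset$ convention are consistent with the paper's stated conventions and add nothing problematic.
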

\begin{proof}
Part $(a)$: For any $c'\in C$, we have
\begin{align*}
d_{\max}(b_1,b_2/C) &\leq d(b_1,c')+d(b_2,c')\\
 &\leq d(b_1,c')+d(b_3,c')+|d(b_2,c')-d(b_3,c')|\\
 &\leq d(b_1,c')+d(b_3,c')+ \sup_{c\in C}|d(b_2,c)-d(b_3,c)|.
 \end{align*}
Therefore $d_{\max}(b_1,b_2/C)\leq d_{\max}(b_1,b_3/C)+\sup_{c\in C}|d(b_2,c)-d(b_3,c)|$.

Part $(b)$: If $d_{\min}(b_1,b_2/C)=\frac{1}{3}d(b_1,b_2)$ then the result is clear. So we may assume $d_{\min}(b_1,b_2/C)=\sup_{c\in C}|d(b_1,c)-d(b_2,c)|$.  Given any $c\in C$, we have $|d(b_1,c)-d(b_2,c)|\leq |d(b_1,c)-d(b_3,c)|+|d(b_2,c)-d(b_3,c)|$. Therefore
\begin{align*}
d_{\min}(b_1,b_2/C)&\leq \sup_{c\in C}|d(b_1,c)-d(b_3,c)|+\sup_{c\in C}|d(b_2,c)-d(b_3,c)|\\
 &\leq d_{\min}(b_1,b_3/C)+d_{\min}(b_2,b_3/C).\qedhere
\end{align*}
\end{proof}

Note that for any $b_1,b_2\in\U$ and $C\subset\U$,
$$
d_{\min}(b_1,b_2/C)\leq d(b_1,b_2)\leq d_{\max}(b_1,b_2/C).
$$
We show that $\Gamma(b_1,b_2/C)$ is determined by these values.

\begin{lemma}\label{min ind}
For any $b_1,b_2\in\U$ and $C\subset\U$,
$$
\Gamma(b_1,b_2/C)= [d_{\min}(b_1,b_2/C),d_{\max}(b_1,b_2/C)].
$$
\end{lemma}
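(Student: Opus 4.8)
The plan is to prove the two inclusions
$$
\Gamma(b_1,b_2/C)\seq[d_{\min}(b_1,b_2/C),d_{\max}(b_1,b_2/C)]
$$
and its reverse separately. For the forward inclusion, I take an arbitrary sequence $(\bbar^l)_{l<\omega}\in\IS(b_1,b_2/C)$ and must show that $r:=d(b^0_1,b^1_2)$ lands in the prescribed interval. The upper bound $r\leq d_{\max}(b_1,b_2/C)$ should follow from the triangle inequality through any $c\in C$: since $\bbar^l\equiv_C\bbar$, each $b^l_i$ has the same distance to $c$ as $b_i$, so $d(b^0_1,b^1_2)\leq d(b^0_1,c)+d(c,b^1_2)=d(b_1,c)+d(b_2,c)$, and taking the infimum over $c$ gives the bound. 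For the lower bound $r\geq d_{\min}(b_1,b_2/C)$, I need both that $r\geq\sup_{c\in C}|d(b_1,c)-d(b_2,c)|$, which again comes from the triangle inequality (reverse form) together with $C$-invariance, and that $r\geq\frac{1}{3}d(b_1,b_2)$. This last inequality is the one genuinely using indiscernibility rather than just $C$-invariance: I expect to invoke Lemma \ref{inds}\eqref{switch} (or the transitivity bounds of Lemma \ref{inds}) applied to the sequence $\abar^l=(b^l_1,b^l_2)$, chaining the edges $d(b^0_1,b^1_2)$, $d(b^1_2,b^2_1)$, $d(b^2_1,b^0_1)$ or a similar short cycle so that the triangle inequality forces $d(b_1,b_2)\leq 3r$.

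For the reverse inclusion I must produce, for each target value $r$ in the closed interval, an actual indiscernible sequence in $\IS(b_1,b_2/C)$ realizing $d(b^0_1,b^1_2)=r$. The natural strategy is to build a partial semimetric on the set $\{b^l_i:l<\omega,\ i\in\{1,2\}\}\cup C$ by declaring $d(b^l_i,b^m_j)=r$ for all cross-block pairs with $l\neq m$ (and $i\neq j$ in the relevant pattern), keeping $d(b^l_i,c)=d(b_i,c)$ for $c\in C$ and the within-block distances equal to those of $(b_1,b_2)$, then check this is consistent via Lemma \ref{consistent} and realize it inside $\U$ by Proposition \ref{embed}. The resulting sequence will be $C$-indiscernible by quantifier elimination, since all the relevant distances are prescribed uniformly. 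Verifying consistency amounts to checking $m$-transitivity, i.e. that no path through these uniform distances and the $C$-distances violates the triangle inequality; the constraints $d_{\min}\leq r\leq d_{\max}$ are exactly what should make every such path valid. I would also use Proposition \ref{ind commute} to reduce symmetric cases and perhaps treat the endpoints $r=d_{\min}$ and $r=d_{\max}$ by the same construction, noting the interval is closed.

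The main obstacle I anticipate is the reverse inclusion, specifically verifying that the uniformly-defined partial semimetric is consistent, i.e. $m$-transitive for all $m$. The difficulty is bookkeeping: a path may alternate between the constant value $r$ on cross-block edges and the variable $C$-distances, and one must confirm that the worst-case paths are controlled precisely by the two defining quantities $d_{\max}$ (an infimum of sums $d(b_1,c)+d(b_2,c)$, governing when $r$ is too large) and $d_{\min}$ (a supremum of differences $|d(b_1,c)-d(b_2,c)|$ together with $\frac{1}{3}d(b_1,b_2)$, governing when $r$ is too small). The factor $\frac{1}{3}$ is the subtle point: it should arise from the shortest cycle among three distinct blocks all at distance $r$, where transitivity demands $d(b_1,b_2)\leq 3r$, and I expect the hardest check to be that longer alternating paths never produce a stronger constraint than these minimal configurations. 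The triangle-inequality lemma (Lemma \ref{forkprops}) for $d_{\min}$ and $d_{\max}$ is likely designed precisely to streamline this verification, so I would lean on it to reduce long paths to the extremal two- and three-block cases.
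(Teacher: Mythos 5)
Your forward inclusion is essentially the paper's argument: the bounds involving $C$ are read off the triangles $\{b^0_1,b^1_2,c\}$, and the bound $\gamma\geq\frac{1}{3}d(b_1,b_2)$ comes from a length-three path. One small correction there: the three edges you name, $d(b^0_1,b^1_2)$, $d(b^1_2,b^2_1)$, $d(b^2_1,b^0_1)$, form a closed cycle rather than a path from some $b^l_1$ to $b^l_2$, and the ``reverse'' edge $d(b^1_2,b^2_1)=d(b^0_2,b^1_1)$ is not known to equal $\gamma$ for the given sequence. The chain you want is $d(b_1,b_2)=d(b^1_1,b^1_2)\leq d(b^1_1,b^2_2)+d(b^2_2,b^0_1)+d(b^0_1,b^1_2)$, in which every edge has the form $d(b^l_1,b^m_2)$ with $l<m$ and hence equals $\gamma$ by indiscernibility. (Lemma \ref{inds} will not do this for you: it controls only the cross-block quantities $\epsilon_{i,j}=d(a^0_i,a^1_j)$, not the within-block distance $d(b_1,b_2)$.)

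The genuine gap is in the reverse inclusion. Declaring $d(b^l_i,b^m_j)=\gamma$ for \emph{all} cross-block pairs is inconsistent in general: for $i=j$ the triangle $\{b^l_i,b^m_i,c\}$ forces $d(b^l_i,b^m_i)\leq 2d(b_i,c)$ for every $c\in C$, i.e.\ $d(b^l_i,b^m_i)\leq d_{\max}(b_i,b_i/C)$, and this can be strictly smaller than every $\gamma$ in the interval (take $d(b_1,c)=0.1$ and $d(b_2,c)=0.9$ for some $c\in C$, with $d(b_1,b_2)=0.9$; then $d_{\min}(b_1,b_2/C)\geq 0.8$ but $d_{\max}(b_1,b_1/C)\leq 0.2$). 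So the same-coordinate cross-block distances --- which your parenthetical ``$i\neq j$ in the relevant pattern'' leaves unspecified --- are exactly where the content of the construction lies. The paper sets $d(b^l_i,b^m_i)=\min\{d_{\max}(b_i,b_i/C),\,d(b_1,b_2)+\gamma,\,2\gamma\}$ for $l<m$ and checks each triangle directly using Lemma \ref{forkprops}. You could instead omit these pairs from the domain of your partial semimetric and let the path metric of Lemma \ref{consistent} fill them in (it produces the same value), but then you must still verify $m$-transitivity of the partial structure and argue that the completion assigns these distances uniformly so that the resulting sequence is $C$-indiscernible; the verification is not mere bookkeeping, since the triangles $\{b^l_i,b^m_i,c\}$ and $\{b^l_i,b^m_j,b^n_k\}$ are precisely where the hypotheses $d(b_1,b_2)\leq 3\gamma$ and $d_{\max}(b_1,b_2/C)\leq d_{\max}(b_i,b_i/C)+d_{\min}(b_1,b_2/C)$ (Lemma \ref{forkprops}) are needed.
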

\begin{proof}
We first show $\Gamma(b_1,b_2/C)\seq [d_{\min}(b_1,b_2/C),d_{\max}(b_1,b_2/C)]$. Fix $(\bbar^l)_{l<\omega}\in\Gamma(b_1,b_2/C)$ and set $\gamma=d(b^0_1,b^1_2)$. 
For any $c\in C$, the three points $\{b^0_1,b^1_2,c\}$ imply 
$$
|d(b_1,c)-d(b_2,c)|\leq \gamma\leq d(b_1,c)+d(b_2,c)
$$
We also have
$$
d(b_1,b_2)=d(b^1_1,b^1_2)\leq d(b^1_1,b^2_2)+d(b^0_1,b^2_2)+d(b^0_1,b^1_2)= 3\gamma.
$$
Altogether, $d_{\min}(b_1,b_2/C)\leq\gamma\leq d_{\max}(b_1,b_2/C)$, as desired.

Conversely, fix $\gamma\in[d_{\min}(b_1,b_2/C),d_{\max}(b_1,b_2/C)]$. We define the sequence $(\bbar^l)_{l<\omega}$ such that, for $i,j\in\{1,2\}$ and $l\leq m<\omega$,
$$
d(b^l_i,b^m_j)=\begin{cases}
d(b_i,b_j) & \text{if $l=m$}\\
\min\{d_{\max}(b_i,b_i/C),d(b_1,b_2)+\gamma,2\gamma\} & \text{if $l<m$, $i=j$}\\
\gamma & \text{if $l<m$, $i\neq j$.}
\end{cases}
$$
We will show that the definition of this sequence satisfies the triangle inequality.  From this, it will then follow that $(\bbar^l)_{l<\omega}$ is an indiscernible sequence in $\U$, which witnesses $\gamma\in\Gamma(b_1,b_2/C)$.

By indiscernibility in the definition of $(\bbar^l)_{l<\omega}$, the nontrivial triangles to check are those with the following vertex sets:
\begin{enumerate}
\item $\{b^l_i,b^m_j,c\}$ for some $i,j\in\{1,2\}$, $l<m<\omega$, and $c\in C$,
\item $\{b^l_i,b^m_j,b^n_k\}$ for some $i,j,k\in\{1,2\}$ and $l\leq m\leq n<\omega$.
\end{enumerate}

\noindent\textit{Case $1$}: $\{b^l_i,b^m_j,c\}$ for some $i,j\in\{1,2\}$, $l<m<\omega$, and $c\in C$. We need to show
$$
|d(b_i,c)-d(b_j,c)|\leq d(b^l_i,b^m_j)\leq d(b_i,c)+d(b_j,c).
$$
\begin{enumerate}[$(i)$]
\item $d(b^l_i,b^m_j)\leq d(b_i,c)+d(b_j,c)$. 

In all cases we have $d(b^l_i,b^m_j)\leq d_{\max}(b_i,b_j/C)\leq d(b_i,c)+d(b_j,c)$.

\item $d(b^l_i,b^m_j)\geq|d(b_i,c)-d(b_j,c)|$. 

We may clearly assume $i\neq j$. Then $d(b^l_i,b^m_j)=\gamma\geq|d(b_i,c)-d(b_j,c)|$.
\end{enumerate}

\noindent\textit{Case $2$}: $\{b^l_i,b^m_j,b^n_k\}$ for some $i,j,k\in\{1,2\}$ and $l\leq m\leq n<\omega$. Note that $i,j,k$ cannot all be distinct. By indiscernibility, and the symmetry in the definition of $(\bbar^l)_{l<\omega}$, we may assume $l<n=m$ or $l<m<n$.

\textit{Subcase 2.1}: $l<m=n$. Then we may assume $j\neq k$; and it suffices to  check the following two inequalities.

\begin{enumerate}[$(i)$]
\item $d(b_1,b_2)\leq d(b^l_i,b^m_j)+d(b^l_i,b^m_k)$. 

Without loss of generality, we may assume $i=j$. We want to show
$$
d(b_1,b_2)\leq d(b^l_i,b^m_i)+\gamma.
$$
If $d(b^l_i,b^m_i)=d(b_1,b_2)+\gamma$ then this is trivial. If $d(b^l_i,b^m_i)=2\gamma$ then this is true since $d(b_1,b_2)\leq 3\gamma$. Suppose $d(b^l_i,b^m_i)=d_{\max}(b_i,b_i/C)$. Then, using Lemma \ref{forkprops}$(a)$,
$$
d(b_1,b_2)\leq d_{\max}(b_1,b_2/C)\leq d_{\max}(b_i,b_i/C)+d_{\min}(b_1,b_2/C)\leq d(b^l_i,b^m_i)+\gamma.
$$

\item $d(b^l_i,b^m_j)\leq d(b_1,b_2)+d(b^l_i,b^m_k)$.

Suppose $i=j$. Then $i\neq k$ so $d(b^l_i,b^m_j)\leq d(b_1,b_2)+\gamma=d(b_1,b_2)+d(b^l_i,b^m_k)$.

Suppose $i=k$. Then $i\neq j$ so $d(b^l_i,b^m_j)=\gamma$. If $d(b^l_i,b^m_k)=d(b_1,b_2)+\gamma$ or $d(b^l_i,b^m_k)=2\gamma$ then the inequality is obvious. So we may assume $d(b^l_i,b^m_k)=d_{\max}(b_i,b_i/C)$. Then, using Lemma \ref{forkprops}$(a)$,
$$
d(b^l_i,b^m_j)=\gamma\leq d_{\max}(b_1,b_2/C)\leq d(b_1,b_2)+d_{\max}(b_i,b_i/C)=d(b_1,b_2)+d(b^l_i,b^m_k).
$$
\end{enumerate}

\textit{Subcase 2.2}: $l<m<n$.  By indiscernibility, it suffices to check
$$
d(b^l_i,b^m_j)\leq d(b^l_i,b^n_k)+d(b^m_j,b^n_k).
$$
If $i\neq j$ or $i=j=k$ then the inequality is trivial. So assume $i=j\neq k$. Then we have $d(b^l_i,b^m_j)\leq 2\gamma=d(b^l_i,b^n_k)+d(b^m_j,b^n_k)$.
\end{proof}

Using this result we can reformulate Lemma \ref{lemma4} as follows.

\begin{corollary}\label{divcor}
 Suppose $a,b_1,b_2\in\U$ and $C\subset\U$. Then $a\ind^d_C b_1b_2$ if and only if for all $i,j\in\{1,2\}$,
 $$
 d(a,b_i)+d(a,b_j)\geq d_{\max}(b_i,b_j/C) \text{ and } |d(a,b_i)-d(a,b_j)|\leq d_{\min}(b_i,b_j/C).
 $$
 \end{corollary}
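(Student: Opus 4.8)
The plan is to derive this corollary directly by substituting the explicit computation of $\Gamma$ from Lemma \ref{min ind} into the characterization already established in Lemma \ref{lemma4}. Lemma \ref{lemma4} tells us that, for $a,b_1,b_2\in\U$ and $C\subset\U$, the relation $a\ind^d_C b_1b_2$ holds if and only if for all $i,j\in\{1,2\}$ we have $d(a,b_i)+d(a,b_j)\geq\sup\Gamma(b_i,b_j/C)$ and $|d(a,b_i)-d(a,b_j)|\leq\inf\Gamma(b_i,b_j/C)$. So the only task is to replace $\sup\Gamma(b_i,b_j/C)$ and $\inf\Gamma(b_i,b_j/C)$ by the quantities $d_{\max}(b_i,b_j/C)$ and $d_{\min}(b_i,b_j/C)$.

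The key observation is that Lemma \ref{min ind} identifies $\Gamma(b_i,b_j/C)$ with the \emph{closed} interval $[d_{\min}(b_i,b_j/C),d_{\max}(b_i,b_j/C)]$. Since this interval is nonempty (the chain of inequalities $d_{\min}(b_i,b_j/C)\leq d(b_i,b_j)\leq d_{\max}(b_i,b_j/C)$ recorded just before Lemma \ref{min ind} shows $d_{\min}\leq d_{\max}$) and closed, its supremum and infimum are attained at the endpoints; that is,
$$
\sup\Gamma(b_i,b_j/C)=d_{\max}(b_i,b_j/C)\qquad\text{and}\qquad\inf\Gamma(b_i,b_j/C)=d_{\min}(b_i,b_j/C)
$$
for each $i,j\in\{1,2\}$. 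Feeding these two equalities into the statement of Lemma \ref{lemma4} yields precisely the two conditions in the corollary, and the equivalence transfers verbatim.

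There is essentially no obstacle in this argument: it is a purely formal consequence of combining the two preceding results, with the closedness of the interval in Lemma \ref{min ind} being the only point that needs to be invoked (to justify that $\sup$ and $\inf$ equal the named endpoints rather than merely lying in their closure). The genuine content has already been carried out in the proofs of Lemma \ref{lemma4} and Lemma \ref{min ind}, so I would present this simply as an immediate corollary of those two results.
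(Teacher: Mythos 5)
Your proposal is correct and matches the paper exactly: the paper states the corollary as an immediate reformulation of Lemma \ref{lemma4} obtained by substituting the interval computation $\Gamma(b_i,b_j/C)=[d_{\min}(b_i,b_j/C),d_{\max}(b_i,b_j/C)]$ from Lemma \ref{min ind}. Your extra remark that the interval is nonempty and closed, so that $\sup$ and $\inf$ are the named endpoints, is the only detail worth recording and is handled correctly.
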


If we combine Lemma \ref{to one} and Corollary \ref{divcor} then we obtain the complete characterization of dividing independence in the Urysohn sphere.

\begin{theorem}\label{dividethm}
If $A,B,C\subset\U$ then $A\ind^d_C B$ if and only if for all $b_1,b_2\in B$,
$$
d_{\max}(b_1,b_2/AC)=d_{\max}(b_1,b_2/C)\text{ and } d_{\min}(b_1,b_2/AC)=d_{\min}(b_1,b_2/C).
$$
\end{theorem}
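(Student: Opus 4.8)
The plan is to chain together Lemma~\ref{to one} and Corollary~\ref{divcor} and then rephrase the resulting inequalities as statements about how $d_{\max}$ and $d_{\min}$ behave when the parameter set grows from $C$ to $AC$. First I would apply Lemma~\ref{to one} to replace $A\ind^d_C B$ with the family of assertions $a\ind^d_C b_1b_2$, ranging over all $a\in A$ and all $b_1,b_2\in B$. Each of these, by Corollary~\ref{divcor}, is exactly the system of inequalities
$$
d(a,b_i)+d(a,b_j)\geq d_{\max}(b_i,b_j/C)\quad\text{and}\quad |d(a,b_i)-d(a,b_j)|\leq d_{\min}(b_i,b_j/C)
$$
for $i,j\in\{1,2\}$. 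As $b_1,b_2$ run over $B$ and $i,j$ over $\{1,2\}$, the pairs $(b_i,b_j)$ run over all ordered pairs of elements of $B$ (diagonal included), so after reindexing, $A\ind^d_C B$ is equivalent to the single assertion: for all $a\in A$ and all $b,b'\in B$, both $d(a,b)+d(a,b')\geq d_{\max}(b,b'/C)$ and $|d(a,b)-d(a,b')|\leq d_{\min}(b,b'/C)$ hold.

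The second step is the translation into equalities of $d_{\max}$ and $d_{\min}$. Fixing $b,b'\in B$ and using $AC=A\cup C$ together with the definitions, I would record the decompositions
$$
d_{\max}(b,b'/AC)=\min\Bigl\{d_{\max}(b,b'/C),\ \inf_{a\in A}\bigl(d(a,b)+d(a,b')\bigr)\Bigr\}
$$
and
$$
d_{\min}(b,b'/AC)=\max\Bigl\{d_{\min}(b,b'/C),\ \sup_{a\in A}|d(a,b)-d(a,b')|\Bigr\}.
$$
Since $\min\{x,y\}=x$ precisely when $y\geq x$, and an infimum is $\geq t$ precisely when every term is, the first decomposition yields that $d_{\max}(b,b'/AC)=d_{\max}(b,b'/C)$ holds if and only if $d(a,b)+d(a,b')\geq d_{\max}(b,b'/C)$ for every $a\in A$. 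Dually, since $\max\{x,y\}=x$ precisely when $y\leq x$, and a supremum is $\leq t$ precisely when every term is, the second decomposition gives $d_{\min}(b,b'/AC)=d_{\min}(b,b'/C)$ if and only if $|d(a,b)-d(a,b')|\leq d_{\min}(b,b'/C)$ for every $a\in A$. Substituting these two equivalences into the reformulation from the first step completes the argument.

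The only point requiring care is verifying the two decomposition formulas, and in particular that the equalities with the $C$-values correspond exactly to universally quantified inequalities over $a\in A$. This is purely a consequence of the definitions of $d_{\max}$ and $d_{\min}$ and of the monotonicity forced by enlarging the parameter set: the defining infimum can only decrease, while the defining supremum can only increase. It is also crucial here that the term $\tfrac13 d(b,b')$ inside $d_{\min}$ does not depend on the parameter set, so it contributes identically to $d_{\min}(b,b'/C)$ and $d_{\min}(b,b'/AC)$ and does not interfere with the comparison. Everything beyond this is routine quantifier bookkeeping, so I expect no real obstacle.
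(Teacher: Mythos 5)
Your argument is correct and takes essentially the same route as the paper: the paper obtains the theorem by simply combining Lemma~\ref{to one} with Corollary~\ref{divcor}, leaving implicit the translation of the resulting inequalities into the equalities of $d_{\max}$ and $d_{\min}$ over $AC$ versus $C$. Your two decomposition formulas, and the observation that the $\tfrac13 d(b_1,b_2)$ term is parameter-independent, supply exactly that routine bookkeeping, so there is nothing to correct.
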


\subsection{Extension for dividing independence}\label{subsec:ext}

The next goal is to show our characterization of $\ind^d$ satisfies condition $(ii)$ of Theorem \ref{ext reduct}, which we restate as follows.

\begin{theorem}\label{finite-one}
Fix $B,C\subset\U$ and a singleton $b_*\in\U$. For any $A\subset\U$, if $A\ind^d_C B$ then there is $A'\equiv_C A$ such that $A'\ind^d_C Bb_*$. 
\end{theorem}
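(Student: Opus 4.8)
The plan is to obtain $A'$ by realizing a complete type over $Bb_*C$ extending $\tp(A/BC)$. By quantifier elimination and Proposition \ref{embed}, it is enough to build a consistent metric space on $A'\cup B\cup C\cup\{b_*\}$ in which $A'\cup B\cup C$ is isometric to $A\cup B\cup C$ (identifying $A'$ with $A$ over $BC$, so that $A'\equiv_{BC}A$), the subspace $B\cup C\cup\{b_*\}$ retains its distances from $\U$, and the only undetermined data---the distances $\delta_a:=d(a,b_*)$ for $a\in A$---are chosen so that $A'\ind^d_C Bb_*$. Since every distance other than the $\delta_a$ is already fixed, the array of distances is a pseudometric exactly when the triangle inequality holds for every triple containing $b_*$ and a point of $A'$; concretely I must verify (T1) triangles $\{a,a',b_*\}$ with $a,a'\in A'$, (T2) triangles $\{a,b,b_*\}$ with $b\in B$, and (T3) triangles $\{a,c,b_*\}$ with $c\in C$.

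By Lemma \ref{to one} and Corollary \ref{divcor}, the requirement $A'\ind^d_C Bb_*$ reduces to checking $a\ind^d_C y_1y_2$ for each single $a\in A$ and all $y_1,y_2\in Bb_*$. The pairs with $y_1,y_2\in B$ hold automatically, since $A'\equiv_{BC}A$ yields $A'\ind^d_C B$. The pair $(b_*,b_*)$ contributes the lower bound $\delta_a\geq\inf_{c\in C}d(c,b_*)$ (i.e. $2\delta_a\geq d_{\max}(b_*,b_*/C)$), and each pair $(b,b_*)$ with $b\in B$ contributes $d(a,b)+\delta_a\geq d_{\max}(b,b_*/C)$ and $|d(a,b)-\delta_a|\leq d_{\min}(b,b_*/C)$. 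These, together with (T2) and (T3), are all interval constraints on the single real $\delta_a$; moreover (T2) is already implied by the two $(b,b_*)$ conditions, using $d(b,b_*)\leq d_{\max}(b,b_*/C)$ and $d_{\min}(b,b_*/C)\leq d(b,b_*)$.

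I would take $\delta_a$ to be the largest admissible value,
\[
\delta_a=\min\Big\{1,\ \inf_{b\in B}\big(d(a,b)+d_{\min}(b,b_*/C)\big),\ \inf_{c\in C}\big(d(a,c)+d(c,b_*)\big)\Big\}.
\]
The terms appearing here are precisely the upper bounds imposed by (T3) and by the deviation condition $|d(a,b)-\delta_a|\leq d_{\min}(b,b_*/C)$, so $\delta_a$ satisfies every upper-bound constraint by construction, and it remains only to check that it also clears every lower bound. This is exactly where the hypothesis $A\ind^d_C B$ is used: for each $a$ and all $b,b'\in B$ it gives $d(a,b)+d(a,b')\geq d_{\max}(b,b'/C)$ and $|d(a,b)-d(a,b')|\leq d_{\min}(b,b'/C)$. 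Feeding these into the subadditivity estimates of Lemma \ref{forkprops} and the definitions of $d_{\max},d_{\min}$, each comparison ``lower bound $\leq$ upper bound'' reduces to a short triangle-inequality calculation; for instance $d_{\max}(b,b_*/C)-d(a,b)\leq d(a,b')+d_{\min}(b',b_*/C)$ follows from $d(a,b)+d(a,b')\geq d_{\max}(b,b'/C)$ together with Lemma \ref{forkprops}$(a)$ and $\sup_{c}|d(b_*,c)-d(b',c)|\leq d_{\min}(b',b_*/C)$.

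The main obstacle is (T1), the one condition coupling distinct elements of $A$, since an uncoordinated choice of the $\delta_a$ easily violates $d(a,a')\leq\delta_a+\delta_{a'}$. Here the explicit formula pays off. The inequality $|\delta_a-\delta_{a'}|\leq d(a,a')$ is immediate because $\delta_a$ is a $1$-Lipschitz function of $a$, being an infimum of maps $x\mapsto d(a,x)+\mathrm{const}$. For $d(a,a')\leq\delta_a+\delta_{a'}$ it suffices, after taking an infimum, to prove $d(a,a')\leq\delta_a+\big(d(a',z')+\kappa_{z'}\big)$ for each $z'\in B\cup C$ (the cap $\delta_{a'}=1$ case being trivial), where $\kappa_{z'}=d_{\min}(z',b_*/C)$ for $z'\in B$ and $\kappa_{z'}=d(z',b_*)$ for $z'\in C$. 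The feasibility just established supplies the matching lower bound $\delta_a\geq d(a,z')-\kappa_{z'}$ at the very same point $z'$, and adding the two estimates gives $\delta_a+d(a',z')+\kappa_{z'}\geq d(a,z')+d(a',z')\geq d(a,a')$ by the triangle inequality. Once (T1), (T2), (T3) are verified the space is a metric space, and Proposition \ref{embed} embeds it into $\U$ over $Bb_*C$, producing the required $A'$.
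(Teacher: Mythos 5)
Your proposal is correct and follows essentially the same route as the paper: after the paper's harmless normalization $C\seq B$, your $\delta_a$ is exactly the quantity $U(a)$ of Definition \ref{LU}, your feasibility checks (``every lower bound clears every upper bound'') are the content of Lemma \ref{1tpext}$(b)$ and Lemma \ref{Ulem}$(a)$, and your Lipschitz-plus-feasibility argument for the triangles $\{a,a',b_*\}$ is Lemma \ref{Ulem}$(b)$. The only difference is that the paper writes out all of the comparisons you assert ``reduce to a short triangle-inequality calculation'' --- they do, each one via Lemma \ref{forkprops} and Corollary \ref{divcor} exactly as in your worked example.
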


Toward the proof of Theorem \ref{finite-one}, fix $B,C\subset\U$ and $b_*\in\U$. We may assume $C\seq B$. Given $b\in B$, let $\delta_b=d_{\min}(b_*,b/C)$ and $\epsilon_b=d_{\max}(b_*,b/C)$.

\begin{definition}\label{LU} Given $a\in\U$, define
\begin{alignat*}{3}
U(a) &= U(a,b_*/B,C)&&= \inf_{b\in B}(d(a,b)+\delta_b)\\
L(a) &= L(a,b_*/B,C)&&= \sup_{b\in B}\max\{\epsilon_b\dotminus d(a,b),d(a,b)\dotminus\delta_b\}
\end{alignat*}
\end{definition}

We first prove two lemmas. 

\begin{lemma}\label{1tpext}  Fix $a\in\U$.
\begin{enumerate}[$(a)$]
\item Assume $\gamma\in[0,1]$ is such that $L(a)\leq\gamma\leq U(a)$ and $d_{\max}(b_*,b_*/C)\leq\gamma+\gamma$. If $a'\in\U$ is such that $a'\equiv_B a$ and $d(a',b_*)=\gamma$, then $a'\ind^d_C Bb_*$.  
\item If $a\ind^d_C B$ then $L(a)\leq U(a)$ and $d_{\max}(b_*,b_*/C)\leq U(a)+U(a)$.
\item If $a\ind^d_C B$ and $a'\in\U$, with $a'\equiv_B a$ and $d(a',b_*)=U(a)$, then $a'\ind^d_C Bb_*$.
\end{enumerate}
\end{lemma}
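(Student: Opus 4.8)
The plan is to reduce every statement to the pairwise criterion for dividing established in Corollary \ref{divcor}, via the finite-character reduction of Lemma \ref{to one}. Since $a'\equiv_B a$, the distances $d(a',b)=d(a,b)$ for $b\in B$ are determined, while $d(a',b_*)=\gamma$; so verifying $a'\ind^d_C Bb_*$ amounts, by Lemma \ref{to one}, to checking $a'\ind^d_C c_1c_2$ for all $c_1,c_2\in Bb_*$. The pairwise conditions for $c_1,c_2\in B$ are identical for $a'$ and $a$ because $a'\equiv_B a$, so the genuinely new content concerns the pairs $\{b,b_*\}$ with $b\in B$ and the pair $\{b_*,b_*\}$. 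Parts $(a)$ and $(b)$ are the substance; part $(c)$ will then be immediate by applying $(a)$ with $\gamma=U(a)$.

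For part $(a)$, I would unwind the hypotheses $L(a)\le\gamma\le U(a)$ into exactly the inequalities Corollary \ref{divcor} demands for the pairs $\{b,b_*\}$. Writing $\epsilon_b=d_{\max}(b,b_*/C)$ and $\delta_b=d_{\min}(b,b_*/C)$, the bound $L(a)\le\gamma$ yields $\epsilon_b\dotminus d(a,b)\le\gamma$ and $d(a,b)\dotminus\delta_b\le\gamma$ for every $b$, i.e. $d(a,b)+\gamma\ge\epsilon_b$ and $d(a,b)-\gamma\le\delta_b$; while $\gamma\le U(a)$ gives $\gamma\le d(a,b)+\delta_b$, i.e. $\gamma-d(a,b)\le\delta_b$. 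The last two combine to $|d(a,b)-\gamma|\le\delta_b$, which together with $d(a,b)+\gamma\ge\epsilon_b$ is precisely the condition of Corollary \ref{divcor} for $\{b,b_*\}$. The pair $\{b_*,b_*\}$ requires $2\gamma\ge d_{\max}(b_*,b_*/C)$ (its $d_{\min}$ half being trivial), which is the third hypothesis. This completes $(a)$.

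For part $(b)$, I would establish $L(a)\le U(a)$ by checking, for all $b,b'\in B$, that $\max\{\epsilon_b\dotminus d(a,b),\,d(a,b)\dotminus\delta_b\}\le d(a,b')+\delta_{b'}$, which splits into $\epsilon_b\le d(a,b)+d(a,b')+\delta_{b'}$ and $d(a,b)\le d(a,b')+\delta_b+\delta_{b'}$. The second follows from $a\ind^d_C B$, since Corollary \ref{divcor} gives $d(a,b)-d(a,b')\le d_{\min}(b,b'/C)$ and Lemma \ref{forkprops}$(b)$ gives $d_{\min}(b,b'/C)\le\delta_b+\delta_{b'}$. The first follows once one proves $\epsilon_b=d_{\max}(b_*,b/C)\le d_{\max}(b,b'/C)+\delta_{b'}$ and then invokes $d(a,b)+d(a,b')\ge d_{\max}(b,b'/C)$ from $a\ind^d_C B$. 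For the bound $d_{\max}(b_*,b_*/C)\le 2U(a)$ I would show, for each $b$, that $\inf_{c\in C}d(b_*,c)\le d(a,b)+\delta_b$: this uses $\inf_c d(b_*,c)\le\inf_c d(b,c)+\delta_b$ (from $d(b_*,c)\le d(b,c)+|d(b_*,c)-d(b,c)|\le d(b,c)+\delta_b$) together with $\inf_c d(b,c)\le d(a,b)$ (from $a\ind^d_C b$, which via Corollary \ref{divcor} gives $2d(a,b)\ge d_{\max}(b,b/C)=2\inf_c d(b,c)$); doubling and taking the infimum over $b$ finishes.

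Part $(c)$ is then immediate: by $(b)$ the value $\gamma:=U(a)$ satisfies $L(a)\le\gamma\le U(a)$ and $d_{\max}(b_*,b_*/C)\le\gamma+\gamma$, and $a'$ has $d(a',b_*)=U(a)=\gamma$, so part $(a)$ applies. The step I expect to be the main obstacle is the triangle-inequality bookkeeping in $(b)$, specifically the inequality $d_{\max}(b_*,b/C)\le d_{\max}(b,b'/C)+d_{\min}(b_*,b'/C)$, which is \emph{not} a direct instance of Lemma \ref{forkprops}$(a)$ and must be verified by hand from the definitions of $d_{\max}$ and $d_{\min}$, using the elementary estimate $d(b',c)+|d(b_*,c)-d(b',c)|\ge d(b_*,c)$ uniformly inside the infimum over $c\in C$.
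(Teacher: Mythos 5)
Your proof is correct and follows essentially the same route as the paper's: reduce to the pairwise criterion of Corollary \ref{divcor} via Lemma \ref{to one}, unwind $L(a)\le\gamma\le U(a)$ into exactly those inequalities for the pairs involving $b_*$, and derive part $(b)$ from $a\ind^d_C B$ together with Lemma \ref{forkprops} (your ``by hand'' inequality $d_{\max}(b_*,b/C)\le d_{\max}(b,b'/C)+\delta_{b'}$ is in fact a direct instance of Lemma \ref{forkprops}$(a)$ combined with $\sup_{c\in C}|d(b_*,c)-d(b',c)|\le\delta_{b'}$). The one step to tidy is the inference $2d(a,b)\ge d_{\max}(b,b/C)\Rightarrow d(a,b)\ge\inf_{c\in C}d(b,c)$, which can fail under the paper's truncated addition; it is avoided by never halving, i.e.\ by estimating $d_{\max}(b_*,b_*/C)\le d_{\max}(b,b/C)+\delta_b+\delta_b\le d(a,b)+d(a,b)+\delta_b+\delta_b$ directly, which is essentially the paper's own computation.
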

\begin{proof}
Part $(a)$: Suppose $a'\in\U$ is such that $a'\equiv_B a$ and $d(a,b_*)=\gamma$. We use Theorem \ref{dividethm} to prove $a'\ind^d_C Bb_*$. First, note that $a'\equiv_B a$, and $L(a)\leq\gamma\leq U(a)$ together imply that for all $b\in B$,
\begin{align*}
d_{\max}(b,b_*/C) &=\epsilon_b\leq d(a',b_*)+d(a,b)=d(a',b_*)+d(a',b),\text{ and}\\
d_{\min}(b,b_*/C) &=\delta_b\geq |d(a',b_*)-d(a,b)|=|d(a',b_*)-d(a',b)|.
\end{align*}
Finally, we trivially have $|d(a',b_*)-d(a',b_*)|\leq d_{\min}(b_*,b_*/C)$; and, by assumption,
$$
d_{\max}(b_*,b_*/C)\leq\gamma+\gamma=d(a',b_*)+d(a',b_*).
$$

Part $(b)$: To show $L(a)\leq U(a)$, we fix $\alpha\in\{\epsilon_b\dotminus d(a,b):b\in B\}\cup\{d(a,b)\dotminus\delta_b:b\in B\}$ and $\beta\in\{d(a,b)+\delta_b:b\in B\}$, and show $\alpha\leq\beta$. Fix $b\in B$ such that $\beta=d(a,b)+\delta_b$.

\noindent\textit{Case 1}: $\alpha=\epsilon_{b'}\dotminus d(a,b')$ for some $b'\in B$. By $(\dagger)$ and Lemma \ref{forkprops}$(a)$, we have 
$$
\epsilon_{b'}\leq d_{\max}(b,b'/C)+\delta_b\leq d(a,b')+d(a,b)+\delta_b=d(a,b')+\beta.
$$

\noindent\textit{Case 2}: $\alpha=d(a,b')\dotminus \delta_{b'}$ for some $b'\in B$. By $(\dagger\dagger)$ and Lemma \ref{forkprops}$(b)$, we have
\begin{equation*}
d(a,b')\leq d_{\min}(b,b'/C)+d(a,b)\leq \delta_{b'}+\delta_b+d(a,b)=\delta_{b'}+\beta.
\end{equation*}

Next, we show $d_{\max}(b_*,b_*/C)\leq U(a)+U(a)$. Fix $b,b'\in B$. We want to show
$$
d_{\max}(b_*,b_*/C) \leq d(a,b) + 
\delta_b + d(a,b') + \delta_{b'}.
$$
By $(\dagger)$ and Lemma \ref{forkprops}$(a)$, we have 
\begin{align*}
d_{\max}(b_*,b_*/C) &\leq \epsilon_b+\delta_{b'}\\
 &\leq d_{\max}(b,b'/C)+\delta_{b}+\delta_{b'}\\
 &\leq d(a,b)+d(a,b')+\delta_b+\delta_{b'}.
\end{align*}

Part $(c)$. Follows immediately from $(a)$ and $(b)$.
\end{proof}

\begin{lemma}\label{Ulem}$~$
\begin{enumerate}[$(a)$]
\item If $a\in\U$ then $\sup_{b\in B}|d(a,b)-d(b_*,b)|\leq L(a)$ and $U(a)\leq d_{\max}(a,b_*/B)$.
\item If $a_1,a_2\in\U$ and $a_1a_2\ind^d_C B$ then $|U(a_1)-U(a_2)|\leq d(a_1,a_2)\leq U(a_1)+U(a_2)$.
\end{enumerate}
\end{lemma}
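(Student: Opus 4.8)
The plan is to treat the two parts separately, relying throughout on the termwise bounds $\delta_b\le d(b_*,b)\le\epsilon_b$ (a special case of the inequality $d_{\min}\le d\le d_{\max}$ recorded just before Lemma \ref{min ind}), the subadditivity statements in Lemma \ref{forkprops}, and the dividing criterion of Theorem \ref{dividethm}. Part $(a)$ is a direct termwise comparison, the lower bound in $(b)$ is pure triangle inequality, and the upper bound in $(b)$ is the only place the independence hypothesis enters.

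For part $(a)$, I fix $b\in B$ and split on the sign of $d(a,b)-d(b_*,b)$. If $d(a,b)\ge d(b_*,b)$, then $d(b_*,b)\ge\delta_b$ gives $|d(a,b)-d(b_*,b)|\le d(a,b)\dotminus\delta_b$; if $d(a,b)<d(b_*,b)$, then $d(b_*,b)\le\epsilon_b$ gives $|d(a,b)-d(b_*,b)|\le\epsilon_b\dotminus d(a,b)$. In either case the quantity is bounded by the term $\max\{\epsilon_b\dotminus d(a,b),\,d(a,b)\dotminus\delta_b\}$ defining $L(a)$, so taking the supremum over $b\in B$ yields the first inequality. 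The second inequality is immediate: since $\delta_b\le d(b_*,b)$ for each $b$, we have $d(a,b)+\delta_b\le d(a,b)+d(b_*,b)$, and taking infima over $b\in B$ gives $U(a)\le d_{\max}(a,b_*/B)$.

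For the lower bound in part $(b)$, no independence is needed. From $d(a_1,b)\le d(a_1,a_2)+d(a_2,b)$ I get $d(a_1,b)+\delta_b\le d(a_1,a_2)+\big(d(a_2,b)+\delta_b\big)$ for every $b\in B$, and taking infima gives $U(a_1)\le d(a_1,a_2)+U(a_2)$; by symmetry $|U(a_1)-U(a_2)|\le d(a_1,a_2)$.

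The main obstacle is the upper bound $d(a_1,a_2)\le U(a_1)+U(a_2)$, which is where I use $a_1a_2\ind^d_C B$. Writing $U(a_1)+U(a_2)=\inf_{b_1,b_2\in B}\big((d(a_1,b_1)+\delta_{b_1})+(d(a_2,b_2)+\delta_{b_2})\big)$, it suffices to fix arbitrary $b_1,b_2\in B$ and show $d(a_1,a_2)\le (d(a_1,b_1)+\delta_{b_1})+(d(a_2,b_2)+\delta_{b_2})$. The naive route — triangle inequality through $b_1,b_2$ followed by $d(b_1,b_2)\le\delta_{b_1}+\delta_{b_2}$ — fails, since $\delta$ is a minimal-distance quantity and bounds $d(b_1,b_2)$ only from below. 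The fix is to route through $a_1$: first $d(a_1,a_2)\le d(a_1,b_2)+d(a_2,b_2)$, and then compare $d(a_1,b_2)$ with $d(a_1,b_1)$. Here I invoke Theorem \ref{dividethm}: because $a_1\in a_1a_2C$ contributes to the supremum defining $d_{\min}(b_1,b_2/a_1a_2C)$, and this value equals $d_{\min}(b_1,b_2/C)$ by the independence, I obtain $|d(a_1,b_1)-d(a_1,b_2)|\le d_{\min}(b_1,b_2/C)$, hence $d(a_1,b_2)\le d(a_1,b_1)+d_{\min}(b_1,b_2/C)$. Finally Lemma \ref{forkprops}$(b)$ gives $d_{\min}(b_1,b_2/C)\le\delta_{b_1}+\delta_{b_2}$, and assembling these inequalities produces the required bound for the fixed $b_1,b_2$; taking the infimum over $b_1,b_2\in B$ completes the proof. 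Throughout I keep in mind that sums are truncated at $1$, which only aids these upper-bound inequalities since no distance exceeds $1$.
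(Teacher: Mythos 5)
Your proof is correct and follows essentially the same route as the paper's: part $(a)$ by the same case split on the sign of $d(a,b)-d(b_*,b)$ and the termwise bound $\delta_b\leq d(b_*,b)\leq\epsilon_b$, the lower bound in $(b)$ by the triangle inequality on the infimum defining $U$, and the upper bound in $(b)$ by routing $d(a_1,a_2)$ through one of the two base points and controlling $|d(a_1,b_1)-d(a_1,b_2)|$ by $d_{\min}(b_1,b_2/C)\leq\delta_{b_1}+\delta_{b_2}$ via the dividing characterization and Lemma \ref{forkprops}$(b)$. The only cosmetic difference is that you cite Theorem \ref{dividethm} where the paper uses the equivalent singleton form (Corollary \ref{divcor}).
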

\begin{proof}
Part $(a)$: First, note that, for any $b\in B$, we have $U(a)\leq d(a,b)+\delta_b\leq d(a,b)+d(b_*,b)$. Next, for any $b\in B$, if $d(a,b)\leq d(b_*,b)$ then, since $d(b_*,b)\leq\epsilon_b$, we have $|d(b_*,b)-d(a,b)|\leq\epsilon_b-d(a,b)\leq L(a)$. On the other hand, if $d(b_*,b)<d(a,b)$ then, since $\delta_b\leq d(b_*,b)$, we have $|d(b_*,b)-d(a,b)|\leq d(a,b)-\delta_b\leq L(a)$. 

Part $(b)$: We first show $d(a_1,a_2)\leq U(a_1)+U(a_2)$. Fix $b,b'\in B$. Using $a_1\ind^d_C B$ and Lemma \ref{forkprops}$(b)$, we have
$$
|d(a_1,b)-d(a_1,b')|\leq d_{\min}(b,b'/C)\leq\delta_b+\delta_{b'}.
$$
 Therefore, $d(a_1,b')\leq d(a_1,b)+\delta_b+\delta_{b'}$, and so
$$
d(a_1,a_2)\leq d(a_1,b')+d(a_2,b')\leq d(a_1,b)+\delta_b+d(a_2,b')+\delta_{b'},
$$
as desired.

Next, we show $|U(a_1)- U(a_2)|\leq d(a_1,a_2)$. Without loss of generality, we may assume $U(a_1)\leq U(a_2)$ and show $U(a_2)\leq d(a_1,a_2) + U(a_1)$. For this, fix $b\in B$, and note that $U(a_2)\leq d(a_2,b)+\delta_b\leq d(a_1,a_2)+d(a_1,b)+\delta_b$.
\end{proof}

We are now ready to prove Theorem \ref{finite-one}.

\begin{proof}[Proof of Theorem \ref{finite-one}] Suppose $A\ind^d_C B$. We again assume, without loss of generality, that $C\seq B$. Let $\xbar=(x_a)_{a\in A}$ and define the type
$$
p(\xbar) := \tp_{\xbar}(A/B) \cup \{ d(x_a,b_*) = U(a) : a\in A\}.
$$
Suppose $A'$ realizes $p(\xbar)$. We immediately have $A'\equiv_B A$. Moreover, by Lemma \ref{1tpext}$(c)$, we have $a'\ind^d_C Bb_*$ for all $a'\in A$, and so $A'\ind^d_C Bb_*$ by Lemma \ref{to one}. Altogether, it suffices to show $p(\xbar)$ is satisfiable, which, as usual, means verifying triangle inequalities. 

We consider $\xbar Bb_*$ as a semimetric space with distances determined by $p(\xbar)$. Fix three points $V=\{v_1,v_2,v_3\}\seq\xbar Bb_*$. If $V\subseteq \xbar B$, or if $V \subseteq  Bb_*$, then $V$ is isometric to a triangle in $\U$. Thus we may assume one of the following holds:
\begin{enumerate}
\item $V=\{x_a,b,b_*\}$ for some $a\in A$ and $b\in B$;
\item $V=\{x_{a_1},x_{a_2},b_*\}$ for some $a_1,a_2\in A$.
\end{enumerate}

If $(1)$ holds then the distances in $V$ are $d(a,b)$, $d(b,b_*)$ and $U(a)$. Therefore, the triangle inequalities for $V$ follow from Lemma \ref{1tpext}$(b)$ and Lemma \ref{Ulem}$(a)$.

If $(2)$ holds then the distances in $V$ are $d(a_1,a_2)$, $U(a_1)$, and $U(a_2)$. Therefore, the triangle inequalities for $V$ follow from Lemma \ref{Ulem}$(b)$. 
\end{proof}

Combining Theorem \ref{ext reduct}, Theorem \ref{dividethm}, and Theorem \ref{finite-one}, we have completed the full characterization of forking and dividing for complete types in the Urysohn sphere.

\begin{theorem}
If $A,B,C\subset\U$ then $A\ind^f_C B$ if and only if $A\ind^d_C B$ if and only if for all $b_1,b_2\in B$,
$$
d_{\max}(b_1,b_2/AC)=d_{\max}(b_1,b_2/C)\text{ and }d_{\min}(b_1,b_2/AC)=d_{\min}(b_1,b_2/C).
$$
\end{theorem}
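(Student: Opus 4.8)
The plan is to assemble the statement from the three results already in hand, with essentially no new work. First, the second biconditional---that $A\ind^d_C B$ holds if and only if $d_{\max}$ and $d_{\min}$ are preserved when passing from $C$ to $AC$---is exactly the content of Theorem \ref{dividethm}, so nothing further is required there. It remains only to establish the first biconditional, namely that $\ind^f$ and $\ind^d$ coincide on complete types in $\Th(\cU)$.

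For this I would invoke Theorem \ref{ext reduct}, which tells us that forking and dividing agree for complete types precisely when $\ind^d$ satisfies the extension axiom, and moreover that it suffices to verify the one-point reduction of condition $(iii)$: for all $A,B,C\subset\U$ and every singleton $b_*\in\U$, if $A\ind^d_C B$ then there is some $A'\equiv_{BC}A$ with $A'\ind^d_C Bb_*$. So the entire burden reduces to producing such an $A'$.

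But this is precisely Theorem \ref{finite-one}. The only point warranting care is the cosmetic discrepancy between the $\equiv_C$ appearing in the statement of Theorem \ref{finite-one} and the $\equiv_{BC}$ demanded by condition $(iii)$. This is resolved by the standard reduction to $C\seq B$: since $A\ind^d_C B$ depends only on $\tp(A/BC)$, one may replace $B$ by $BC$ without altering any hypothesis or conclusion, and then $\equiv_B$ and $\equiv_{BC}$ coincide. Under this reduction the proof of Theorem \ref{finite-one} in fact produces $A'\equiv_B A$, which is exactly the $A'\equiv_{BC}A$ required by $(iii)$.

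Thus condition $(iii)$ of Theorem \ref{ext reduct} holds, whence by the implication $(iii)\Rightarrow(i)$ we conclude $A\ind^f_C B\iff A\ind^d_C B$ for all $A,B,C\subset\U$. Chaining this with Theorem \ref{dividethm} yields the full triple equivalence. I expect no genuine obstacle: all the substantive analysis---the combinatorial characterization of dividing via $d_{\max}$ and $d_{\min}$, and the explicit construction witnessing extension---has already been carried out in the preceding subsections, and this final theorem is purely a matter of citing Theorems \ref{ext reduct}, \ref{dividethm}, and \ref{finite-one} in the correct order. The single subtlety meriting a sentence of justification is the $\equiv_C$ versus $\equiv_{BC}$ bookkeeping just noted.
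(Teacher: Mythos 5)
Your proposal is correct and is exactly the paper's argument: the paper derives this theorem by combining Theorem \ref{ext reduct}, Theorem \ref{dividethm}, and Theorem \ref{finite-one} with no additional work. Your observation about the $\equiv_C$ versus $\equiv_{BC}$ bookkeeping is a valid and worthwhile clarification (the proof of Theorem \ref{finite-one} does produce $A'\equiv_B A$ after the reduction to $C\seq B$, which is what condition $(iii)$ actually requires), but it does not change the route.
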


For future use, we also record the following consequence of our work.

\begin{corollary}\label{extop}
Fix $C\seq B\subset\U$ and $a,b_*\in\U$, with $a\ind^f_C B$. Given $\gamma\in[0,1]$, the following are equivalent:
\begin{enumerate}[$(i)$]
\item there is $a'\in\U$ such that $a'\equiv_B a$, $d(a',b_*)=\gamma$, and $a'\ind^f_C Bb_*$;
\item $L(a,b_*/B,C)\leq\gamma\leq U(a,b_*/B,C)$ and $d_{\max}(b_*,b_*/C)\leq \gamma+\gamma$.
\end{enumerate}
\end{corollary}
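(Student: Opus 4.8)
The plan is to recognize that, by the theorem immediately preceding this corollary, $\ind^f$ and $\ind^d$ agree for complete types, so throughout I may replace every occurrence of $\ind^f$ (both in the hypothesis $a\ind^f_C B$ and in condition $(i)$) by $\ind^d$. I will also use that, by Definition \ref{LU}, $L(a,b_*/B,C)=L(a)$ and $U(a,b_*/B,C)=U(a)$, and that $d_{\max}$ and $d_{\min}$ are symmetric, so $d_{\max}(b,b_*/C)=\epsilon_b$ and $d_{\min}(b,b_*/C)=\delta_b$. With these reductions the corollary is essentially a repackaging of Lemma \ref{1tpext} and Corollary \ref{divcor}, and I would prove the two implications separately.

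For $(ii)\Rightarrow(i)$ I would first produce a candidate $a'$ and then invoke Lemma \ref{1tpext}$(a)$. Consider the type $p(x)=\tp_x(a/B)\cup\{d(x,b_*)=\gamma\}$: any realization $a'$ automatically satisfies $a'\equiv_B a$ and $d(a',b_*)=\gamma$, and then Lemma \ref{1tpext}$(a)$, whose two hypotheses $L(a)\le\gamma\le U(a)$ and $d_{\max}(b_*,b_*/C)\le\gamma+\gamma$ are exactly condition $(ii)$, yields $a'\ind^d_C Bb_*$, i.e. $a'\ind^f_C Bb_*$. Thus the only real content is the satisfiability of $p(x)$, which by quantifier elimination and Proposition \ref{embed} reduces to checking the triangle inequalities for the new triples $\{x,b,b_*\}$ with $b\in B$ (the triples inside $B$ or inside $Bb_*$ are already realized in $\U$), namely $|d(a,b)-d(b_*,b)|\le\gamma\le d(a,b)+d(b_*,b)$. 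The lower bound follows from $\gamma\ge L(a)$ together with Lemma \ref{Ulem}$(a)$, which gives $|d(a,b)-d(b_*,b)|\le L(a)$; the upper bound follows from $\gamma\le U(a)\le d(a,b)+\delta_b$ and the elementary inequality $\delta_b=d_{\min}(b_*,b/C)\le d(b_*,b)$ recorded before Lemma \ref{min ind}.

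For $(i)\Rightarrow(ii)$ I would start from a witness $a'$ as in $(i)$, pass to $a'\ind^d_C Bb_*$ via the forking/dividing equivalence, and then apply finite character (Fact \ref{basic}) to obtain $a'\ind^d_C bb_*$ for each $b\in B$. Feeding the triples $(a',b,b_*)$ and $(a',b_*,b_*)$ into Corollary \ref{divcor} and substituting $d(a',b)=d(a,b)$, $d(a',b_*)=\gamma$, $\epsilon_b=d_{\max}(b,b_*/C)$, and $\delta_b=d_{\min}(b,b_*/C)$, gives for every $b\in B$ the inequalities $\epsilon_b\le d(a,b)+\gamma$, $|d(a,b)-\gamma|\le\delta_b$, and $d_{\max}(b_*,b_*/C)\le 2\gamma$. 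Rearranging the first two into $\gamma\ge\epsilon_b\dotminus d(a,b)$, $\gamma\ge d(a,b)\dotminus\delta_b$, and $\gamma\le d(a,b)+\delta_b$, and then taking the supremum (resp. infimum) over $b\in B$, yields exactly $L(a)\le\gamma\le U(a)$, while the third inequality is the remaining clause of $(ii)$.

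I do not anticipate a genuine obstacle, since every step is either a direct citation or an elementary rearrangement; the only point requiring care is the bookkeeping that translates the abstract conditions of Corollary \ref{divcor} into the concrete quantities $L(a)$, $U(a)$, $\epsilon_b$, and $\delta_b$, and being disciplined about invoking the forking/dividing equivalence at the two places where the statement refers to $\ind^f$.
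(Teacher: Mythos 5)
Your proof is correct and is essentially the argument the paper intends: the corollary is stated without proof as a ``consequence of our work,'' and your two directions are precisely the specialization of the Section~\ref{subsec:ext} machinery --- for $(ii)\Rightarrow(i)$, satisfiability of $\tp_x(a/B)\cup\{d(x,b_*)=\gamma\}$ via Lemma \ref{Ulem}$(a)$ and the bound $\delta_b\leq d(b,b_*)$, followed by Lemma \ref{1tpext}$(a)$; for $(i)\Rightarrow(ii)$, reading off Corollary \ref{divcor} for the pairs $(b,b_*)$ and $(b_*,b_*)$. The only point worth making explicit is that the hypothesis $a\ind^f_C B$ (equivalently $a'\ind^d_C B$) is what handles the pairs $b_1,b_2\in B$ when Theorem \ref{dividethm} is used to conclude $a'\ind^d_C Bb_*$, but since that hypothesis is part of the corollary your argument is complete.
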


\subsection{Stationary types}\label{stat}

In this section, we use the characterization of nonforking in $\Th(\cU)$ to show that a type in $\Th(\cU)$ is stationary if and only if it is algebraic.

\begin{definition}
Let $C\subset\M\models T$ and $p\in S_n(C)$. Then $p$ is \textbf{stationary} if for all $B\supseteq C$, there is a unique nonforking extension of $p$ to $S_n(B)$.
\end{definition}

As with many other notions around nonforking, the study of stationary types began in stable theories. One important fact is that if $T$ is stable and $M\models T$ then any type over $M$ is stationary. Therefore, when extending nonforking types over a model, there is a unique choice of extension.

\begin{example}\label{existEx}
Suppose $T$ is a theory in which $\ind^f$ satisfies \emph{existence}, that is, $A\ind^f_C C$ for all $A$ and $C$. Then the most trivial kind of stationary type is one of the form $\tp(\abar/C)$ where $\abar\in\dcl(C)$. In this case, the type is stationary because there is a unique extension to any larger set (which is a nonforking extension by the assumption on $T$). Note that conversely, if a type has a unique extension to any larger parameter set then it must be of the form $\tp(\abar/C)$, with $\abar\in\dcl(C)$.
\end{example}

It follows easily from the definition of dividing that $\ind^d$ satisfies existence in any theory. Therefore, the previous setting applies to any theory in which $\ind^f$ and $\ind^d$ coincide. In particular, it applies to $\Th(\cU)$. The main result of this section is that, in $\Th(\cU)$, the only stationary types are the trivial kind in the previous example. In other words, if a type has more than one extension to any larger parameter set then it has more than one nonforking extension to any larger parameter set. We will need the following fact, which can be found in \cite{EaGo}.

\begin{fact}\label{closure}
If $C\subset\U$ then $\acl(C)=\dcl(C)=\overline{C}$, the usual metric space closure.
\end{fact}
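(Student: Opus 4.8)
The plan is to prove the chain of inclusions
$$\overline{C}\seq\dcl(C)\seq\acl(C)\seq\overline{C},$$
using the standard characterizations from continuous logic that $a\in\dcl(C)$ if and only if $\tp(a/C)$ has a unique realization in $\U$, and $a\in\acl(C)$ if and only if the set of realizations of $\tp(a/C)$ in $\U$ is compact. The middle inclusion $\dcl(C)\seq\acl(C)$ is immediate, since a singleton is compact.

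For $\overline{C}\seq\dcl(C)$, suppose $a\in\U$ with $\inf_{c\in C}d(a,c)=0$, and fix a sequence $(c_n)$ in $C$ with $d(a,c_n)\to 0$. If $a'\equiv_C a$, then by quantifier elimination $d(a',c)=d(a,c)$ for every $c\in C$; in particular $d(a,a')\leq d(a,c_n)+d(c_n,a')=2d(a,c_n)\to 0$, so $a'=a$. Hence $a$ is the unique realization of $\tp(a/C)$, and $a\in\dcl(C)$. This also covers $C\seq\dcl(C)$.

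The main work is the inclusion $\acl(C)\seq\overline{C}$, which I would prove by contraposition. Suppose $a\notin\overline{C}$ and set $r=\inf_{c\in C}d(a,c)$, so that $r>0$. I would produce a countable family $(a_i)_{i<\omega}$ of realizations of $\tp(a/C)$ that is $r$-separated, i.e. $d(a_i,a_j)=r$ for $i\neq j$; since such a family is not totally bounded, the solution set of $\tp(a/C)$ is not compact, giving $a\notin\acl(C)$. To build the family, consider the partial semimetric space with underlying set $C\cup\{a_i:i<\omega\}$ in which each $a_i$ has the same distances to $C$ as $a$ does and $d(a_i,a_j)=r$ for $i\neq j$. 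I would check this is consistent: the only nontrivial triangles are those with two new points $a_i,a_j$ and one $c\in C$, for which the required inequality $|d(a,c)-d(a,c)|\leq r\leq 2d(a,c)$ holds because $d(a,c)\geq r$ for all $c\in C$ (and all distances lie in $[0,1]$, with $r\leq 1$); triangles among three new points are equilateral, and triangles meeting $C$ in at least two points are isometric to configurations already present in $\{a\}\cup C\seq\U$. By Lemma \ref{consistent} this partial semimetric is consistent, and by Proposition \ref{embed} it embeds into $\U$ over $C$. By quantifier elimination each image $a_i$ realizes $\tp(a/C)$, and the images are pairwise at distance $r>0$, as desired.

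I expect this last inclusion to be the only real obstacle; the other two are formal. The essential point is that a point at positive distance from $C$ has no rigidity, so the amalgamation/extension property of $\U$ (Proposition \ref{embed}) produces an infinite discrete set of conjugates, defeating compactness. The triangle-inequality bookkeeping in the construction is routine and reduces entirely to the single inequality $r\leq 2d(a,c)$.
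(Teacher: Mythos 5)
Your argument is correct; note that the paper does not prove this statement itself but records it as a Fact cited from \cite{EaGo}. Your chain $\overline{C}\seq\dcl(C)\seq\acl(C)\seq\overline{C}$, and in particular the construction of an infinite $r$-separated family of realizations of $\tp(a/C)$ (via Lemma \ref{consistent} and Proposition \ref{embed}) to defeat compactness of the solution set, is exactly the standard argument from that reference, so there is nothing to add.
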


Toward characterizing stationary types, we first consider extensions of $1$-types to larger parameter sets obtained by adding a single element.

\begin{theorem}\label{unique ext} Let $C\subset\U$ and $a,b\in\U$. The following are equivalent:
\begin{enumerate}[$(i)$]
\item $\tp(a/C)$ has a unique nonforking extension to $S_1(Cb)$;
\item $\tp(a/C)$ has a unique extension to $S_1(Cb)$;
\item $d_{\max}(a,b/C)=\sup_{c\in C}|d(a,c)-d(b,c)|$.
\end{enumerate}
\end{theorem}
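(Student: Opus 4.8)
The plan is to prove the cycle of implications $(i)\Rightarrow(ii)\Rightarrow(iii)\Rightarrow(i)$, or possibly the cleaner route $(iii)\Rightarrow(ii)\Rightarrow(i)$ together with $(i)\Rightarrow(iii)$. The conceptual heart is Corollary \ref{extop}, which (with $a,b$ in place of $a,b_*$) tells us exactly which distances $\gamma=d(a',b)$ are achievable by a nonforking extension $a'\equiv_C a$: namely those with $L(a,b/C,C)\leq\gamma\leq U(a,b/C,C)$ and $d_{\max}(b,b/C)\leq 2\gamma$. Here $B=C$, so by Definition \ref{LU} we have $U(a)=\inf_{c\in C}(d(a,c)+\delta_c)$ and $L(a)=\sup_{c\in C}\max\{\epsilon_c\dotminus d(a,c),\,d(a,c)\dotminus\delta_c\}$, where $\delta_c=d_{\min}(b,c/C)$ and $\epsilon_c=d_{\max}(b,c/C)$. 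The type $\tp(a/C)$ has a \emph{unique} nonforking extension to $S_1(Cb)$ precisely when there is exactly one admissible value of $\gamma$, which by Corollary \ref{extop} and quantifier elimination means the admissible interval degenerates to a point.

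First I would handle $(i)\Rightarrow(ii)$. Since $\ind^f$ and $\ind^d$ coincide in $\Th(\cU)$ and $\ind^d$ satisfies existence (so $a\ind^f_C C$ holds), every extension of $\tp(a/C)$ to $S_1(Cb)$ is controlled by the achievable distances $d(a',b)$; I would argue that a unique nonforking extension forces the admissible set of $\gamma$ to be a single point, and since by Fact \ref{closure} and quantifier elimination a complete $1$-type over $Cb$ is determined by its distances to $Cb$, this single admissible distance must in fact be the \emph{only} distance realizable at all, giving a unique (hence unique nonforking) extension; the content is that when only one $\gamma$ is consistent with the metric, nonforking imposes no further restriction. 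For $(ii)\Rightarrow(iii)$, I would use the contrapositive: if $d_{\max}(a,b/C)>\sup_{c\in C}|d(a,c)-d(b,c)|$, then by Proposition \ref{embed} one can realize $a$ at two distinct distances from $b$ (anything in the nondegenerate interval $[\sup_{c\in C}|d(a,c)-d(b,c)|,\,d_{\max}(a,b/C)]$, which is exactly the set of $\gamma$ completing the metric on $Cab$), producing two distinct extensions.

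The main work, and the expected main obstacle, is $(iii)\Rightarrow(i)$: assuming $d_{\max}(a,b/C)=\sup_{c\in C}|d(a,c)-d(b,c)|$, I must show the nonforking extension is unique, i.e. the admissible interval $[L(a),\,\min\{U(a),\cdot\}]$ from Corollary \ref{extop} collapses to one point. The plan is to compute $L(a)$ and $U(a)$ explicitly under hypothesis $(iii)$ and show $L(a)=U(a)=\sup_{c\in C}|d(a,c)-d(b,c)|$. The inequality $\sup_{c\in C}|d(a,c)-d(b,c)|\leq L(a)$ is Lemma \ref{Ulem}$(a)$ (with $B=C$), and $U(a)\leq d_{\max}(a,b/C)$ is the other half of Lemma \ref{Ulem}$(a)$. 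Combining these with hypothesis $(iii)$ gives $\sup_{c\in C}|d(a,c)-d(b,c)|\leq L(a)\leq U(a)\leq d_{\max}(a,b/C)=\sup_{c\in C}|d(a,c)-d(b,c)|$, forcing all four quantities equal. Thus $L(a)=U(a)$, and Corollary \ref{extop} yields a unique admissible $\gamma$, hence a unique nonforking extension.

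The delicate point I anticipate is the second-coordinate constraint $d_{\max}(b,b/C)\leq\gamma+\gamma$ in Corollary \ref{extop}: I must confirm that the forced value $\gamma=U(a)$ automatically satisfies this (which it does by Lemma \ref{1tpext}$(b)$, giving $d_{\max}(b,b/C)\leq U(a)+U(a)$ from $a\ind^f_C C$), so that this extra condition does not rule out the unique candidate. I would also need to verify that $L(a)\leq U(a)$ genuinely collapses rather than the interval being empty — but the chain of inequalities above guarantees equality, not emptiness. The only genuine subtlety is checking that $U(a)$ as defined over $B=C$ agrees with $d_{\max}(a,b/C)$ under $(iii)$, which is where the hypothesis is used essentially rather than formally; once the four-term equality is in hand, the conclusion is immediate from the preceding corollaries.
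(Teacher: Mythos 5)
Your overall strategy is the paper's: everything reduces, via Corollary \ref{extop} and quantifier elimination, to computing the interval of admissible distances $\gamma=d(a',b)$ over $B=C$, where $\delta_c=\epsilon_c=d(b,c)$ for $c\in C$ and hence $U(a)=d_{\max}(a,b/C)$ and $L(a)=\sup_{c\in C}|d(a,c)-d(b,c)|$. Your $(ii)\Rightarrow(iii)$ and $(iii)\Rightarrow(i)$ are fine; the paper instead proves $(iii)\Rightarrow(ii)$ directly and $(ii)\Rightarrow(i)$ by existence, putting all the work into $(i)\Rightarrow(iii)$. Your chain $\sup_{c\in C}|d(a,c)-d(b,c)|\leq L(a)\leq U(a)\leq d_{\max}(a,b/C)$ via Lemma \ref{Ulem}$(a)$ and Lemma \ref{1tpext}$(b)$ is a clean way to get $(iii)\Rightarrow(i)$ without the explicit computation.

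The gap is in $(i)\Rightarrow(ii)$. You assert that if the set of $\gamma$ admissible for a \emph{nonforking} extension is a singleton, then the set of $\gamma$ realizable by \emph{any} extension is a singleton; your stated justification (``when only one $\gamma$ is consistent with the metric, nonforking imposes no further restriction'') is the converse of what is needed. By Corollary \ref{extop} the nonforking set is $\{\gamma\in[L(a),U(a)]:d_{\max}(b,b/C)\leq 2\gamma\}$, while the full realizable set is $[L(a),U(a)]$; a priori the side constraint could cut $[L(a),U(a)]$ down to the single point $\{U(a)\}$ even when $L(a)<U(a)$, leaving a unique nonforking extension but many extensions, so $(i)$ would not yield $(ii)$. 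To rule this out you need $d_{\max}(b,b/C)\leq L(a)+U(a)$, i.e. Lemma \ref{forkprops}$(a)$ applied as $d_{\max}(b,b/C)\leq d_{\max}(a,b/C)+\sup_{c\in C}|d(b,c)-d(a,c)|$; this shows the nonforking set always contains the interval from the midpoint $\frac{1}{2}(L(a)+U(a))$ up to $U(a)$, hence is a singleton only if $L(a)=U(a)$. The bound you do invoke, $d_{\max}(b,b/C)\leq U(a)+U(a)$ from Lemma \ref{1tpext}$(b)$, only shows that the top endpoint is admissible and is not enough here. This sharper estimate is exactly what the paper uses in its $(i)\Rightarrow(iii)$ step (producing infinitely many nonforking extensions with $\gamma$ ranging over $[\gamma_0,U(a)]$ for $\gamma_0$ the midpoint), so the fix is a one-line citation; but as written the implication is not justified.
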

\begin{proof}
$(i)\Rightarrow(iii)$: Let $d^*_{\min}(a,b/C)=\sup_{c\in C}|d(a,c)-d(b,c)|$, and note that $d^*_{\min}(a,b/C)\leq d_{\max}(a,b/C)$. Suppose $(ii)$ fails. Then we have $d^*_{\min}(a,b/C)<d_{\max}(a,b/C)$. Let $\gamma_0$ be the average of $d^*_{\min}(a,b/C)$ and $d_{\max}(a,b/C)$. In particular, we have $\gamma_0+\gamma_0=d_{\max}(a,b/C)+d^*_{\min}(a,b/C)$.

Next, let $U(a,b)=U(a,b/C,C)$ and $L(a,b)=L(a,b/C,C)$. Then it is straightforward to calculate that $U(a,b)=d_{\max}(a,b/C)$ and $L(a,b)=d^*_{\min}(a,b/C)$, and so $L(a,b)<\gamma_0<U(a,b)$. Moreover, by Lemma \ref{forkprops}$(a)$, we have
$$
d_{\max}(b,b/C)\leq d_{\max}(a,b/C)+d^*_{\min}(a,b/C)=\gamma_0+\gamma_0,
$$
It follows that, for any $\gamma\in[0,1]$, if $\gamma_0\leq\gamma\leq U(a,b)$ then, by Corollary \ref{extop}, there is $a'\equiv_C a$ such that $a'\ind^f_C b$ and $d(a',b)=\gamma$. Since $\gamma_0<U(a,b)$, there are infinitely many nonforking extensions of $\tp(a/C)$ to $S_1(Cb)$, and so $(i)$ fails.

$(iii)\Rightarrow(ii)$: Suppose $a'$ realizes an extension of $\tp(a/C)$ to $Cb$. Then $\tp(a'/Cb)$ is completely determined by the choice of $d(a',b)$. By $(iii)$ (and the triangle inequality), there is a unique choice.

$(ii)\Rightarrow(i)$: Immediate, since $\ind^f$ satisfies existence.
\end{proof}

\begin{corollary}\label{in closure}
Let $C\subset\U$ and $a\in\U$. Then $\tp(a/C)$ is stationary if and only if $a\in\overline{C}$.
\end{corollary}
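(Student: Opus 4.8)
The goal is to characterize stationary $1$-types over an arbitrary parameter set $C\subset\U$, and the plan is to reduce the general stationarity condition (uniqueness of nonforking extensions to \emph{all} larger sets) to the single-element uniqueness condition already analyzed in Theorem \ref{unique ext}. The forward direction is the substantive one: I would prove the contrapositive, showing that if $a\notin\overline{C}$, then $\tp(a/C)$ admits more than one nonforking extension to some $S_1(B)$, hence is not stationary. The reverse direction is essentially immediate from the earlier framework.

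For the reverse direction, suppose $a\in\overline{C}$. By Fact \ref{closure}, $\overline{C}=\acl(C)=\dcl(C)$, so $a\in\dcl(C)$. Since $\ind^d$ satisfies existence and coincides with $\ind^f$ in $\Th(\cU)$, the situation of Example \ref{existEx} applies verbatim: a type of an element of $\dcl(C)$ has a unique extension to every larger parameter set (the distance $d(a,b)$ is forced for every new point $b$ because $a$ is the metric limit of points of $C$), and that unique extension is nonforking by existence. Hence $\tp(a/C)$ is stationary.

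For the forward direction, I would argue the contrapositive. Suppose $a\notin\overline{C}$; I want to produce a single point $b$ witnessing a failure of uniqueness of nonforking extensions to $S_1(Cb)$, which suffices since stationarity requires uniqueness over \emph{every} $B\supseteq C$. By Theorem \ref{unique ext}, it is enough to find $b\in\U$ with $d_{\max}(a,b/C)>\sup_{c\in C}|d(a,c)-d(b,c)|$, since then condition $(iii)$ of that theorem fails, so $(i)$ fails and $\tp(a/C)$ has more than one nonforking extension to $S_1(Cb)$. The most natural choice is to take $b=a$ itself: then $\sup_{c\in C}|d(a,c)-d(a,c)|=0$, while $d_{\max}(a,a/C)=\inf_{c\in C}2d(a,c)=2\,d(a,\overline{C})>0$ precisely because $a\notin\overline{C}$ means $a$ has positive distance from the closure of $C$. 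Thus $d_{\max}(a,a/C)>0=\sup_{c\in C}|d(a,c)-d(a,c)|$, condition $(iii)$ of Theorem \ref{unique ext} fails with $b=a$, and we conclude $\tp(a/C)$ is not stationary.

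The main obstacle, such as it is, lies in making the forward direction fully rigorous: one must confirm that the failure of uniqueness over the single extended set $Cb$ genuinely obstructs stationarity (which it does, by definition, since stationarity demands uniqueness over all supersets), and one must verify the distance computation $d(a,\overline{C})>0 \iff a\notin\overline{C}$, which is just the standard fact that the distance to a closed set is zero exactly on the set. Everything else is a direct invocation of Theorem \ref{unique ext} and Fact \ref{closure}, so the proof should be short.
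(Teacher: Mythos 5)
Your proof is correct and follows essentially the same route as the paper: both directions hinge on applying Theorem \ref{unique ext} with $b=a$ and observing that $d_{\max}(a,a/C)=\inf_{c\in C}(d(a,c)+d(a,c))$ vanishes exactly when $a\in\overline{C}$, with the reverse direction handled by Fact \ref{closure} and the trivial stationarity of types over $\dcl(C)$. The only cosmetic difference is that you phrase the forward direction as a contrapositive while the paper argues it directly.
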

\begin{proof}
The reverse direction is by Fact \ref{closure}. For the forward direction, if $\tp(a/C)$ is stationary then, by Theorem \ref{unique ext}, $d_{\max}(a,a/C)=0$. Therefore, there is a sequence $(c_n)_{n=0}^\infty$ in $C$ such that $(d(a,c_n)+d(a,c_n))\rightarrow 0$. It follows that $c\in\overline{C}$.
\end{proof}

Next, we show that stationarity of $n$-types is determined by stationarity of 1-types.

\begin{proposition}\label{one type} Let $C\seq B\subset\U$. The following are equivalent.
\begin{enumerate}[$(i)$]
\item $p\in S_n(C)$ has a unique nonforking extension to $S_n(B)$;
\item for all $1\leq i\leq n$ and $b\in B$, $p|_{x_i}$ has a unique nonforking extension to $S_1(Cb)$.
\end{enumerate}
\end{proposition}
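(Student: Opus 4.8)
The plan is to reduce stationarity of the $n$-type $p$ to stationarity of the $1$-types of its coordinates, leaning on two properties of $\ind^d$ (which coincides with $\ind^f$ by Theorem \ref{finite-one} and Theorem \ref{ext reduct}): that it satisfies the extension axiom, and that, by Lemma \ref{to one} together with finite character (Fact \ref{basic}), a realization $\abar'\models p$ yields a nonforking extension $\tp(\abar'/B)$ of $p$ over $C$ if and only if each coordinate type $\tp(a'_i/B)$ is a nonforking extension of $p|_{x_i}$ over $C$. Since, by quantifier elimination, $\tp(\abar'/B)$ is determined by the internal distances (fixed by $p$) together with the distances $d(a'_i,b)$ for $b\in B$ — equivalently, by the tuple $(\tp(a'_i/B))_{i\leq n}$ — this decomposition is the engine of the whole argument.

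For $(ii)\Rightarrow(i)$, I would first note that a nonforking extension of $p$ to $S_n(B)$ exists, by existence of $\ind^d$ followed by the extension axiom. For uniqueness it suffices, via the decomposition above, to show that each $p|_{x_i}$ has a unique nonforking extension to $S_1(B)$. If $r_1,r_2\in S_1(B)$ are both nonforking extensions of $p|_{x_i}$, then for every $b\in B$ their restrictions to $S_1(Cb)$ are nonforking extensions of $p|_{x_i}$ (by finite character), hence equal by $(ii)$; as $r_1$ and $r_2$ agree on $C$ and at each $b\in B$, quantifier elimination gives $r_1=r_2$. Feeding this back through the decomposition shows the nonforking extension of $p$ to $S_n(B)$ is unique.

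For $(i)\Rightarrow(ii)$ I would argue by contraposition. Suppose $p|_{x_{i^*}}$ has two distinct nonforking extensions to $S_1(Cb^*)$ for some coordinate $i^*$ and some $b^*\in B$; realize them by $a^{(1)},a^{(2)}$ with $a^{(k)}\models p|_{x_{i^*}}$, $a^{(k)}\ind^d_C b^*$, and $d(a^{(1)},b^*)\neq d(a^{(2)},b^*)$. The crucial step is to lift each $a^{(k)}$ to a full realization of $p$ that stays nonforking. Fixing $\abar\models p$ and using homogeneity of $\U$ to move its $i^*$-coordinate onto $a^{(k)}$, I obtain $\abar_0\models p$ whose $i^*$-coordinate is $a^{(k)}$; applying the extension axiom over the base $Ca^{(k)}$, I replace the remaining coordinates of $\abar_0$ by a tuple $\ind^d_{Ca^{(k)}}b^*$ that preserves $\tp(\,\cdot\,/Ca^{(k)})$, producing $\abar^{(k)}\models p$ with $i^*$-coordinate $a^{(k)}$ and remaining coordinates $\ind^d_{Ca^{(k)}}b^*$. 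Transitivity of $\ind^d$ — immediate from Theorem \ref{dividethm}, since the equalities of $d_{\max}$ and $d_{\min}$ of pairs from $b^*$ simply chain through the intermediate base $Ca^{(k)}$ — then yields $\abar^{(k)}\ind^d_C b^*$. Thus $\tp(\abar^{(1)}/Cb^*)$ and $\tp(\abar^{(2)}/Cb^*)$ are distinct nonforking extensions of $p$ over $Cb^*$ (they differ in the $i^*$-coordinate distance to $b^*$), and extending each to $S_n(B)$ by the extension axiom gives two distinct nonforking extensions of $p$ over $B$, so $(i)$ fails.

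The main obstacle is precisely this lifting step in $(i)\Rightarrow(ii)$: the nonforking condition for the full tuple decouples into per-coordinate conditions (Lemma \ref{to one}), but realizability couples the coordinates through the internal distances fixed by $p$, so one cannot naively perturb a single coordinate's distance to $b^*$ and expect a consistent metric. Routing the construction through homogeneity and a base-change application of the extension axiom bypasses all triangle-inequality bookkeeping, and the one genuinely new ingredient, transitivity of $\ind^d$, costs nothing once Theorem \ref{dividethm} is in hand.
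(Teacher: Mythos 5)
Your proof is correct and follows essentially the same route as the paper: the key step in both is lifting a single-coordinate nonforking extension to a full realization of $p$ by applying extension for $\ind^d$ over the enlarged base $Ca^{(k)}$ and then invoking left-transitivity of $\ind^d$ (which the paper cites from Tent--Ziegler and you derive directly from Theorem \ref{dividethm}). The remaining differences --- extending to $B$ before versus after the lift, and packaging $(ii)\Rightarrow(i)$ through uniqueness of the extensions of each $p|_{x_i}$ to $S_1(B)$ rather than a direct quantifier-elimination comparison of two extensions of $p$ --- are cosmetic.
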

\begin{proof} $(i)\Rightarrow(ii)$: Suppose for some $1\leq i\leq n$ and $b\in B$, $p|_{x_i}$ has two distinct nonforking extensions $q$ and $q'$ to $S_1(Cb)$. Without loss of generality, assume $i=1$. Let $r$ and $r'$ be nonforking extensions to $S_1(B)$ of $q$ and $q'$, respectively. So $r$ and $r'$ are distinct nonforking extensions of $p|_{x_1}$ to $S_1(B)$. 

\noindent\textit{Claim}: If $a$ realizes a nonforking extension of $p|_{x_1}$ to $S_1(B)$ then there some $\abar$, realizing a nonforking extension of $p$ to $S_n(B)$, such that $a_1=a$.

\noindent\textit{Proof}: We have $a\ind_C^d B$. By extension, there is $(a_2,\ldots,a_n)\models p(a,x_2,\ldots,x_n)$ such that $a_2,\ldots,a_n\ind_{Ca}^d Ba$. By a transitivity property of dividing (see \cite[Proposition 7.1.6]{TeZi}), we have $\abar\ind^d_C B$, where $\abar=(a,a_2\ldots,a_n)$. So $\abar$ is the desired realization of a nonforking extension of $p$ to $S_n(B)$.\claim

By the claim, $r$ and $r'$ yield distinct nonforking extensions of $p$ to $S_n(B)$.

$(ii)\Rightarrow(i)$: Fix $p\in S_n(C)$. Suppose $\abar$ and $\abar'$ realize distinct (nonforking) extensions of $p$ to $S_n(B)$. Note that $\abar\equiv_C\abar'$ so by quantifier elimination there must be some $1\leq i\leq n$ and $b\in B$ such that $d(a_i,b)\neq d(a'_i,b)$. Therefore $a_i$ and $a_i'$ realize distinct (nonforking) extensions of $p|_{x_i}$ to $S_1(Cb)$.
\end{proof}

Note that the proof of $(i)\Rightarrow(ii)$ holds in any theory where $\ind^d=\ind^f$.

\begin{corollary}
Let $C\seq B\subset\U$ and $p\in S_n(C)$. Then $p$ has a unique nonforking extension to $S_n(B)$ if and only if $p$ has a unique extension to $S_n(B)$.
\end{corollary}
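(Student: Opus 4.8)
The final statement asserts, for $C\seq B\subset\U$ and $p\in S_n(C)$, that $p$ has a unique nonforking extension to $S_n(B)$ if and only if $p$ has a unique extension to $S_n(B)$. My plan is to reduce this $n$-type statement to the corresponding $1$-type statement, which is already handled by Theorem \ref{unique ext}, using Proposition \ref{one type} as the bridge.

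The key observation is that Proposition \ref{one type} characterizes unique nonforking extensions of $p$ to $S_n(B)$ in terms of unique nonforking extensions of each restriction $p|_{x_i}$ to $S_1(Cb)$ for all $b\in B$. Meanwhile, Theorem \ref{unique ext} tells us that for a $1$-type, having a unique nonforking extension to $S_1(Cb)$ is equivalent to having a unique (ordinary) extension to $S_1(Cb)$. So the idea is to string these together. First I would apply Proposition \ref{one type} to reduce the condition ``$p$ has a unique nonforking extension to $S_n(B)$'' to the condition that each $p|_{x_i}$ has a unique nonforking extension to $S_1(Cb)$ for every $b\in B$. Then, by the equivalence $(i)\Leftrightarrow(ii)$ of Theorem \ref{unique ext}, this is in turn equivalent to each $p|_{x_i}$ having a unique ordinary extension to $S_1(Cb)$ for every $b\in B$.

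The remaining task is to show that this last condition---uniqueness of ordinary $1$-type extensions to each $S_1(Cb)$---is equivalent to $p$ having a unique ordinary extension to $S_n(B)$. This should mirror exactly the proof of $(ii)\Rightarrow(i)$ in Proposition \ref{one type}, but without any reference to nonforking. Concretely, by quantifier elimination a complete extension of $p$ to $S_n(B)$ is determined entirely by the distances $d(a_i,b)$ for $1\leq i\leq n$ and $b\in B$; if each such distance is uniquely determined (i.e.\ each $p|_{x_i}$ has a unique extension over $Cb$), then $p$ has a unique extension over $B$, and conversely any two distinct $1$-type extensions produce distinct $n$-type extensions. I expect this routine adaptation to go through cleanly.

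The only subtle point---and the step I would watch most carefully---is ensuring that the ``unique extension'' direction genuinely matches up, since Theorem \ref{unique ext} is stated for extensions to a single $S_1(Cb)$ rather than to all of $S_1(B)$ at once. But this is exactly the form in which Proposition \ref{one type}$(ii)$ is phrased, so the two results dovetail without friction; there is no need to pass through $S_1(B)$ directly. Thus the proof is essentially a three-line chain of equivalences: apply Proposition \ref{one type}, then apply Theorem \ref{unique ext} pointwise over each $i$ and $b$, then observe by quantifier elimination that pointwise uniqueness of distances is the same as uniqueness of the full $n$-type extension.
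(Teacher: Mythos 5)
Your proof is correct and follows essentially the same route as the paper: reduce to $1$-types via Proposition \ref{one type}, convert nonforking uniqueness to plain uniqueness via Theorem \ref{unique ext}, and reassemble using quantifier elimination as in the proof of Proposition \ref{one type}$(ii)\Rightarrow(i)$. The only difference is that the paper dispatches the direction ``unique extension $\Rightarrow$ unique nonforking extension'' immediately from existence of nonforking extensions, whereas your full chain of equivalences additionally needs the (routine, but worth noting) fact that any extension of $p|_{x_i}$ to $S_1(Cb)$ lifts to an extension of $p$ to $S_n(B)$.
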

\begin{proof}
The forward direction follows by existence of nonforking extensions. Conversely, combining Theorem \ref{unique ext} and Proposition \ref{one type}, we have that if $p$ has a unique nonforking extension to $S_n(B)$ then for all $1\leq i\leq n$ and $b\in B$, $p|_{x_i}$ has a unique extension to $S_1(Cb)$. As in the proof of Proposition \ref{one type}[$(ii)\Rightarrow (i)$], it follows that $p$ has a unique extension to $S_n(B)$.
\end{proof}

We are now ready to characterize stationary types in $\Th(\cU)$.

\begin{corollary}
Fix $C\subset\U$ and $p\in S_n(C)$. Then $p$ is stationary if and only if $p$ is algebraic, i.e., $p=\tp(\abar/C)$ for some $\abar\in\overline{C}$.
\end{corollary}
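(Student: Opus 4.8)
The plan is to assemble the final corollary from the two characterizations already obtained: the coordinatewise reduction of stationarity (Proposition \ref{one type}) and the description of stationary $1$-types (Corollary \ref{in closure}). No new metric computation is required.

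For the reverse implication, suppose $p=\tp(\abar/C)$ with $\abar\in\overline{C}$. By Fact \ref{closure} we have $\overline{C}=\dcl(C)$, so $\abar\in\dcl(C)$. Since $\ind^f=\ind^d$ in $\Th(\cU)$ and $\ind^d$ satisfies existence in any theory, the hypotheses of Example \ref{existEx} are met, and that example gives that $p$ is stationary: for every $B\supseteq C$ the unique extension of $p$ to $S_n(B)$ is a nonforking extension, hence the unique nonforking one.

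For the forward implication, suppose $p$ is stationary and let $\abar=(a_1,\ldots,a_n)\in\U$ realize $p$. Fix $1\leq i\leq n$. By hypothesis, $p$ has a unique nonforking extension to $S_n(Cb)$ for every $b\in\U$, so Proposition \ref{one type} applied with $B=Cb$ shows that $p|_{x_i}=\tp(a_i/C)$ has a unique nonforking extension to $S_1(Cb)$. Since $b\in\U$ was arbitrary, a second application of Proposition \ref{one type} in the case $n=1$ (whose condition $(ii)$ is exactly the statement just established, ranging over all $b\in B$) upgrades this to a unique nonforking extension to $S_1(B)$ for every $B\supseteq C$; that is, $\tp(a_i/C)$ is stationary. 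Corollary \ref{in closure} then yields $a_i\in\overline{C}$, and as $i$ was arbitrary we get $\abar\in\overline{C}$, so $p$ is algebraic.

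I do not expect a genuine obstacle here, since the substantive work lives in the cited results. The only point meriting care is the use of Proposition \ref{one type} in both directions—first to descend from the $n$-type to its coordinate restrictions over each single-point extension $Cb$, and then (with $n=1$) to promote ``unique nonforking extension over every $Cb$'' to full stationarity of each $\tp(a_i/C)$—so that Corollary \ref{in closure} becomes applicable. Alternatively, one can shortcut the forward direction by taking the instance $b=a_i$ and invoking Theorem \ref{unique ext}$(i)\Rightarrow(iii)$ to get $d_{\max}(a_i,a_i/C)=0$ directly, exactly as in the proof of Corollary \ref{in closure}.
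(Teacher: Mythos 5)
Your proof is correct and follows essentially the same route as the paper: reduce stationarity of the $n$-type to stationarity of its coordinate $1$-types via Proposition \ref{one type} (the paper compresses your careful double application into one sentence), then invoke Corollary \ref{in closure}. The only cosmetic difference is that for the reverse direction you go through Example \ref{existEx} and Fact \ref{closure} directly rather than citing the ``if'' direction of Corollary \ref{in closure}, which amounts to the same argument.
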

\begin{proof}
Let $\abar\models p$. By Proposition \ref{one type}, $p$ is stationary if and only if $\tp(a_i/C)$ is stationary for all $1\leq i\leq n$. By Corollary \ref{in closure}, this is equivalent to $\abar\in\overline{C}$.
\end{proof}

\subsection{Analogs of the Urysohn sphere in discrete logic}  The most direct analog of the Urysohn sphere in discrete logic is the \textit{rational Urysohn sphere}, i.e. the unique countable, universal and ultrahomogeneous metric space of diameter $1$ with rational distances. However, the theory of the rational Urysohn sphere, as a discrete structure in a relational language, is not $\aleph_0$-categorical. Moreover, saturated models contain points of nonstandard distance (e.g. positive infinitesimals). 

A more well-behaved discrete theory is that of the Urysohn space with distances in $\{0,1,2,\ldots,n\}$, for some fixed $n>0$. In \cite{CaWa}, Casanovas and Wagner call this structure the free $n^{\text{th}}$ root of the complete graph, and denote its first order theory by $T_n$. In this case, $T_n$ is $\aleph_0$-categorical and, more importantly, nonstandard distances do not arise in saturated models. Therefore our methods in continuous logic can be translated directly to first order logic to obtain the following results. 
\begin{enumerate}
\item For $n\geq 3$, $T_n$ is $\TP_2$, $\SOP_n$ and $\NSOP_{n+1}$.
\item For complete types in $T_n$, forking and dividing are the same and have the same combinatorial characterization as in $\Th(\cU)$. 
\end{enumerate}
In \cite{CaWa}, Casanovas and Wagner show that $T_n$ is non-simple and without the strict order property. Therefore the first result above is a strengthening of their work.

\section{Final Remarks and Questions}\label{sec:Qs}

In \cite{TeZiSIR}, Tent and Ziegler define a \textit{stationary independence relation} to be an abstract ternary relation $\ind$, on finite subsets of a countable structure, which satisfies the axioms of invariance, monotonicity, transitivity, symmetry, full existence, and stationarity. We refer the reader to \cite{TeZiSIR} for the definitions of these properties.\footnote{Tent and Ziegler refer to what we call full existence as ``existence". Following \cite{Adgeo}, we continue to use \emph{existence} for the axiom discussed in Example \ref{existEx}. Is it easy to see that any ternary relation satisfying full existence and invariance also satisfies existence.} See also \cite{Adgeo} for extensions of these axioms to small subsets of a monster model. 

Tent and Ziegler then define a specific stationary independence relation on the countable rational Urysohn space, which generalizes directly to the complete Urysohn sphere, and thus the monster model $\U$. In particular, this stationary independence relation is given by \textit{free amalgamation of metric spaces}:
$$
A\textstyle\ind_C B~\Leftrightarrow~ d(a,b)=d_{\max}(a,b/C)\text{ for all $a\in A$, $b\in B$.}
$$
In \cite{TeZiSIR}, Tent and Ziegler use this relation to show that the automorphism group of the Urysohn space is boundedly simple, and then in \cite{TeZibdd}, use similar methods to show that the automorphism group of the bounded Urysohn sphere is simple. 

Given the appearance of $d_{\max}$ in our characterization of forking for the Urysohn sphere, it is natural to wonder about the relationship between this stationarity independence relation and nonforking. In fact, this leads to a much more general line of questioning, beginning with the next result, which is proved using calculus of ternary relations in the style of \cite{Adgeo}. 

For the rest of this section, we let $T$ be a (continuous or discrete) complete theory, and $\M$ a monster model of $T$.

\begin{theorem}\label{SIRfork}
Suppose $\ind$ is a stationary independence relation on $\M$. Then, for all $A,B,C \subset \M$, $A\ind_C B$ implies $A\ind^f_C B$ and $B\ind^f_C A$.
\end{theorem}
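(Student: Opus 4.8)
The plan is to deduce nonforking from two consequences of the stationary independence axioms: that $\ind$ satisfies \emph{extension}, and that $A\ind_C B$ implies $\tp(A/BC)$ does not divide over $C$. Granting these, I would argue as follows. Fix $A,B,C$ with $A\ind_C B$ and any $D\supseteq BC$. By extension there is $A'\equiv_{BC}A$ with $A'\ind_C D$, and then the nondividing step gives that $\tp(A'/D)$, which extends $\tp(A/BC)$, does not divide over $C$. Since $D$ was arbitrary, Theorem \ref{FANDD}$(b)$ yields that $\tp(A/BC)$ does not fork over $C$, i.e. $A\ind^f_C B$. The statement $B\ind^f_C A$ then follows by applying the same argument to $B\ind_C A$, which holds by symmetry of $\ind$.

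Extension should be immediate from the axioms. Given $A\ind_C B$ and $D\supseteq BC$, full existence produces $A_0\equiv_C A$ with $A_0\ind_C D$, and monotonicity gives $A_0\ind_C B$. Now $A\equiv_C A_0$, $A\ind_C B$, and $A_0\ind_C B$, so stationarity forces $A_0\equiv_{BC}A$; thus $A_0$ is the required realization independent from $D$.

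The real work is the nondividing step, and it rests on the standard fact that independent parameters preserve indiscernibility. Concretely, suppose $A\ind_C B$ and let $(B^l)_{l<\omega}$ be any $C$-indiscernible sequence with $B^0=B$; by Theorem \ref{FANDD}$(a)$ it suffices to find $A^*$ realizing $\bigcup_{l<\omega}\tp(A/B^l C)$. Using extension, I would first choose $A^*\equiv_{B^0 C}A$ with $A^*\ind_C\bigl(C\cup\bigcup_{l<\omega}B^l\bigr)$. It then remains to show that $(B^l)_{l<\omega}$ is $CA^*$-indiscernible, for this immediately gives $A^*B^l\equiv_C A^*B^0\equiv_C AB$ for every $l$, so that $A^*$ realizes $\bigcup_{l<\omega}\tp(A/B^l C)$. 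To verify $CA^*$-indiscernibility, I would compare two increasing tuples $\bar B=B^{l_1}\cdots B^{l_k}$ and $\bar B'=B^{m_1}\cdots B^{m_k}$: an automorphism $\phi$ fixing $C$ sends $\bar B$ to $\bar B'$, so by invariance $\phi(A^*)\ind_C\bar B'$ and $\phi(A^*)\equiv_C A^*$, while monotonicity gives $A^*\ind_C\bar B'$. Stationarity applied to the common parameter set $\bar B'$ then yields $\phi(A^*)\equiv_{C\bar B'}A^*$, and hence $A^*\bar B\equiv_C A^*\bar B'$. This is the step I expect to be the main obstacle, since it is where the interplay of invariance and stationarity must be arranged so that stationarity is only ever invoked for a single fixed parameter set; I note that transitivity is not needed for this argument. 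Assembling these pieces as in the first paragraph then completes the proof.
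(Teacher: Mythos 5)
Your proposal is correct, and it follows the same overall decomposition as the paper's proof: reduce to one direction by symmetry, establish that $\ind$ satisfies extension, show that $\ind$ implies $\ind^d$, and combine the two via Theorem \ref{FANDD}$(b)$. The two sub-steps, however, are executed differently, and both of your variants check out. For extension, the paper applies full existence twice and needs transitivity at the end (it first upgrades $A\ind_C B$ to $A\ind_C BC$ using stationarity and invariance, then takes $A'\equiv_{BC}A$ with $A'\ind_{BC}D$ and composes); your version applies full existence once to get $A_0\ind_C D$ outright and uses stationarity over the single set $B$ to conclude $A_0\equiv_{BC}A$, which is shorter and, as you correctly observe, dispenses with transitivity entirely. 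For the nondividing step, the paper does not establish indiscernibility of $(B_i)_{i<\omega}$ over $CA'$: it fixes, for each $i$, a conjugate $A_i$ with $A_iB_i\equiv_C AB$ and applies stationarity over the single parameter set $B_i$ to get $A'B_i\equiv_C AB$, which is exactly what Theorem \ref{FANDD}$(a)$ requires. Your route instead proves the stronger claim that the sequence is $CA^*$-indiscernible, invoking stationarity over the tuples $\bar{B}'$; the application is arranged correctly (stationarity is only ever used for two realizations of the same type over $C$ that are both independent from one fixed parameter set), and while it does a bit more work than necessary, the preservation-of-indiscernibility lemma it yields is a standard and independently useful fact about stationary independence relations.
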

\begin{proof}
Since $\ind$ satisfies symmetry it suffices to show that $\ind$ implies $\ind^f$. We first show $\ind$ satisfies extension (which is essentially the content of \cite[Remark 1.2(3)]{Adgeo}). Fix $A,B,C,D$ such that $A\ind_C B$ and $BC\seq D$. We want to find $A'\equiv_C A$ such that $A'\ind_C D$. By full existence, there is $A''\equiv_C A$ such that $A''\ind_C BC$, and so $A''\ind_C B$ by monotonicity. By stationarity, we have $A''B\equiv_C AB$, and so $A\ind_C BC$ by invariance. Next, by full existence, there is $A'\equiv_{BC} A$ such that $A'\ind_{BC}D$. By invariance, we also have $A'\ind_C BC$, and so $A'\ind_C D$ by transitivity. 

Finally, since $\ind$ satisfies extension, it suffices to show that $\ind$ implies $\ind^d$. For this, suppose $A\ind_C B$. Fix a $C$-indiscernible sequence $(B_i)_{i<\omega}$, with $B_0=B$. We want to find $A'$ such that $A'B_i\equiv_C AB$ for all $i<\omega$. By full existence there is $A'\equiv_C A$ such that $A'\ind_C\bigcup_{i<\omega}B_i$. By monotonicity, we have $A'\ind_C B_i$ for all $i<\omega$. Given $i<\omega$, fix $A_i$ such that $A_iB_i\equiv_C AB$. For any $i<\omega$, we have $A_i\equiv_C A\equiv_C A'$ and, by invariance, $A_i\ind_C B_i$. By stationarity, we have $A'B_i\equiv_C A_iB_i\equiv_C AB$ for all $i<\omega$, as desired.
\end{proof}

In light of our results on the Urysohn sphere, we ask the following question.

\begin{question}\label{Qmock}
Suppose $T$ has a stationary independence relation. Are forking and dividing the same for complete types?
\end{question}
 
Continuing this line of investigation leads to an even more general question. In particular, we previously noted that if $\ind^f$ coincides with $\ind^d$, then $\ind^f$ satisfies existence. The reverse implication is an open question.

\begin{question}\label{Qexist}
Suppose $\ind^f$ satisfies existence in $T$. Are forking and dividing the same for complete types?
\end{question}

Note that if $\ind$ is a stationary independence relation for $T$ then existence for $\ind$ implies existence for $\ind^f$ by Theorem \ref{SIRfork}. Therefore a positive answer to Question \ref{Qexist} would imply a positive answer to Question \ref{Qmock}. 

This overall line of questioning is also motivated by a result of Chernikov and Kaplan \cite{ChKa}, which says that if $T$ is a complete theory in classical logic, for which nonforking satisfies existence, then, assuming $T$ is also $\NTP_2$, we have a positive answer to Question \ref{Qexist}. In fact, Chernikov and Kaplan show the stronger conclusion that forking and dividing coincide for \textit{formulas}. In general, Question \ref{Qexist} is known to have a negative answer if we replace ``complete types" with ``formulas". In particular, the theory of the generic triangle-free graph is a theory (with $\TP_2$) in which forking and dividing are the same for complete types, but not for formulas (see \cite{Co13}). Moreover, free amalgamation of graphs is a stationary independence relation for the theory of the generic triangle-free graph, and so Question \ref{Qmock} also has a negative answer if asked at the level of formulas. Altogether, since we have shown that the Urysohn sphere is $\TP_2$, this motivates our final question.

\begin{question}\label{Qform}
Are forking and dividing the same for formulas in $\Th(\cU)$? 
\end{question}

\appendix

\section{Continuous Model Theory}\label{appMT}

\renewcommand*{\thesection}{\Alph{section}}

Let $T$ be a complete theory in continuous logic, and $\M$ a monster model of $T$. Throughout this section, we adjust our conventions slightly, and let singletons $a,b,c,x,y,z,\ldots$ denote (possibly infinite) tuples of parameters and variables.

\subsection{The Ehrenfeucht-Mostowski type}\label{EM}
In discrete model theory, the Ehrenfeucht-Mostowski type is a powerful tool for constructing indiscernible sequences. Following \cite[Section 5]{TeZi}, we define this notion for continuous logic.

Given a linear order $I$ and some $n<\omega$, let 
$$
[I]^n=\{(i_1,\ldots,i_n)\in I^n:i_1<\ldots<i_n\}.
$$
Given $\ibar=(i_1,\ldots,i_n)\in[I]^n$ and parameters $a_{i_1},\ldots,a_{i_n}$, let $\abar_{\ibar}=(a_{i_1},\ldots,a_{i_n})$.

\begin{definition}
Let $I$ be a linear order and fix a sequence $\cI=(a_i)_{i\in I}$ in $\M$. The \textbf{Ehrenfeucht-Mostowski type}, or \textbf{EM-type}, of $\cI$ is 
$$
\EM(\cI/A)=\{\vphi(x_1,\ldots,x_n)=0:n<\omega,~\vphi(\xbar)\in\cL_A,~\vphi(a_{\ibar})=0\text{ for all }\ibar\in[I]^n\}.
$$
\end{definition}

We now prove that indiscernible realizations of EM-types can always be found. In discrete logic, this fact is a classic application of Ramsey's Theorem. We adapt the classical proof (e.g \cite[Lemma 5.1.3]{TeZi}) to show that this is true for continuous logic as well. 

\begin{theorem}
Let $I$ and $J$ be infinite linear orders. Suppose $(a_i)_{i\in I}$ is a sequence in $\M$ and $A\subset\M$. Then there is an $A$-indiscernible sequence $(b_j)_{j\in J}$ in $\M$ such that $\EM((a_i)_{i\in I}/A)\seq\EM((b_j)_{j\in J}/A)$. 
\end{theorem}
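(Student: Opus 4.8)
The plan is to realize, inside the saturated monster $\M$, a single type $\Gamma\big((x_j)_{j\in J}\big)$ over $A$ whose realizations are automatically the sequences we want. Let $\Gamma$ consist of two families of conditions: the \emph{indiscernibility conditions} ``$|\vphi(x_{\ibar})-\vphi(x_{\jbar})|=0$'' for every $n<\omega$, every $\vphi(x_1,\ldots,x_n)\in\mathcal{L}_A$, and all $\ibar,\jbar\in[J]^n$; and the \emph{$\EM$-conditions} ``$\vphi(x_{\ibar})=0$'' for every condition ``$\vphi=0$'' lying in $\EM(\cI/A)$ and all $\ibar\in[J]^n$. Any realization $(b_j)_{j\in J}$ of $\Gamma$ is then $A$-indiscernible by the first family and satisfies $\EM(\cI/A)\seq\EM((b_j)_{j\in J}/A)$ by the second, so the entire problem reduces to showing $\Gamma$ is satisfiable.

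By the compactness theorem for continuous logic (together with saturation of $\M$), it is enough to verify that for every finite $\Gamma_0\seq\Gamma$ and every $\epsilon>0$ there is a tuple in $\M$ satisfying each condition of $\Gamma_0$ to within $\epsilon$. Fix such $\Gamma_0$ and $\epsilon$. Then $\Gamma_0$ mentions only finitely many formulas $\vphi_1,\ldots,\vphi_r$, where $\vphi_s$ has arity $n_s$, and only finitely many indices from $J$. The $\EM$-conditions in $\Gamma_0$ require no approximation: by definition of $\EM(\cI/A)$ they are satisfied \emph{exactly}, with value $0$, whenever the variables are assigned to the image of an increasing tuple drawn from $\cI$. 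Hence the only work is to meet the finitely many indiscernibility conditions up to $\epsilon$.

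This is where I would invoke Ramsey's theorem together with a discretization of the value space. Partition $[0,1]$ into finitely many intervals $P_1,\ldots,P_M$, each of length less than $\epsilon$. For each $s\leq r$, color $[I]^{n_s}$ by recording which interval $P_m$ contains $\vphi_s(a_{\ibar})$; this is a finite coloring, so Ramsey's theorem yields an infinite homogeneous subset. Applying Ramsey successively and passing to nested infinite subsets $I\supseteq I_1\supseteq\cdots\supseteq I_r=:I'$ (homogeneity for an $n_s$-ary coloring is inherited by infinite subsets), I obtain one infinite $I'\seq I$ simultaneously homogeneous for all $r$ colorings. Then for each $s$ and any $\ibar,\jbar\in[I']^{n_s}$, the values $\vphi_s(a_{\ibar})$ and $\vphi_s(a_{\jbar})$ lie in a common interval of length $<\epsilon$, so they differ by less than $\epsilon$. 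Assigning the finitely many variables occurring in $\Gamma_0$, in the order induced by $J$, to increasing elements of the infinite set $I'$, the indiscernibility conditions hold within $\epsilon$ and the $\EM$-conditions hold exactly, as required.

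The one genuine subtlety — and the point I expect to require the most care — is the interface between continuous compactness and Ramsey's theorem: since formulas take values in $[0,1]$ rather than $\{0,1\}$, Ramsey cannot produce a set on which a given formula is literally constant, only one on which it is constant up to a prescribed $\epsilon$. This is precisely matched by the fact that continuous compactness asks only for approximate finite satisfiability, so that driving $\epsilon\to 0$ across the finite fragments produces a realization in which the indiscernibility conditions hold with value exactly $0$. Everything else is routine bookkeeping: nesting the finitely many Ramsey applications and checking that the order-preserving assignment of the variables of $J$ into $I'$ respects the tuples appearing in $\Gamma_0$.
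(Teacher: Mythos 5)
Your proposal is correct and follows essentially the same route as the paper: encode indiscernibility and the $\EM$-conditions as a single type over $A$, reduce to finite (approximate) satisfiability by continuous compactness, and discharge the finitely many indiscernibility conditions by discretizing $[0,1]$ into intervals of length $<\epsilon$ and applying Ramsey's theorem, noting that the $\EM$-conditions hold exactly on increasing tuples from $I$. The only cosmetic differences are that the paper packages the finitely many colorings into one coloring of $[I]^N$ with $N$ the maximal arity (extending shorter tuples to length $N$ to exploit homogeneity) rather than iterating Ramsey, and writes the indiscernibility conditions as ``$\leq\epsilon$'' for all $\epsilon>0$ rather than invoking approximate satisfiability of exact conditions.
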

\begin{proof}
Define the following types, each in the variables $(y_j)_{j\in J}$ and containing parameters from $A$.
\begin{align*}
\Gamma_1 &:= \{|\vphi(\ybar_{\jbar})-\vphi(\ybar_{\kbar})|\leq\epsilon:\vphi(x_1,\ldots,x_n)\in\cL_A,~\jbar,\kbar\in [J]^n,~\epsilon>0\}.\\
\Gamma_2 &:= \{\vphi(\ybar_{\jbar})=0:``\vphi(x_1,\ldots,x_n)=0"\in\EM((a_i)_{i\in I}/A),~\jbar\in [J]^n\}.
\end{align*}
Let $\Gamma=\Gamma_1\cup\Gamma_2$, and suppose that the sequence $(b_j)_{j\in J}$ realizes $\Gamma$. Then $\Gamma_1$ ensures $(b_j)_{j\in J}$ is $A$-indiscernible, and $\Gamma_2$ ensures $\EM((a_i)_{i\in I}/A)\seq\EM((b_j)_{j\in J}/A)$. Therefore, it suffices to show that $\Gamma$ is satisfiable.

Fix a finite subset $\Gamma_0\seq\Gamma$. Suppose
$$
\Gamma_0\cap\Gamma_1=\{|\vphi_t(\ybar_{\jbar_t})-\vphi_t(\ybar_{\kbar_t})|\leq\epsilon_t:1\leq t\leq m\},
$$
for some fixed $m>0$. Let $n_t$ be the length of $\jbar_t$, and set $N=\max\{n_1\ldots,n_t\}$. Fix $r>0$ such that $\frac{1}{r}\leq\epsilon_t$ for all $1\leq t\leq m$, and set $\Sigma=\{[\frac{l}{r},\frac{l+1}{r}]:0\leq l<r\}$. Define $F:[I]^N\longrightarrow\Sigma^m$ such that, given $\ibar\in[I]^N$ and $1\leq t\leq m$,
$$
\vphi_t(\abar_{\ibar})\in F(\ibar)(t).
$$
By Ramsey's Theorem, there is an infinite subset $I_*\seq I$ , and some $\sigma\in\Sigma^m$, such that $F(\ibar)=\sigma$ for all $\ibar\in[I_*]^N$.

Let $J_0\seq J$ be finite such that if $y_j$ occurs in some condition of $\Gamma_0$ then $j\in J_0$. Pick $\{i_j:j\in J_0\}\seq I_*$ such that, given $j,k\in J_0$, $j<k$ implies $i_j<i_k$. Given $j\in J_0$, let $b_j=a_{i_j}$. We show that $(b_j)_{j\in J_0}$ realizes $\Gamma_0$.

First, if $\vphi(x_1,\ldots,x_n)$ is an $\cL_A$-formula and $\jbar\in[J_0]^n$, with $``\vphi(y_{\jbar})=0"\in\Gamma_0\cap\Gamma_2$, then $\ibar_{\jbar}\in[I_*]^n$ and so $\vphi(\bbar_{\jbar})=0$ since $``\vphi(x_1,\ldots,x_n)=0"\in\EM((a_i)_{i\in I}/A)$. Finally, fix $1\leq t\leq m$ and let $n=n_t$. Given $\jbar,\kbar\in[J_0]^n$, we have $\ibar_{\jbar},\ibar_{\kbar}\in[I_*]^n$. Since $I_*$ is infinite, we may find $\ibar_{\jbar'},\ibar_{\kbar'}\in [I_*]^N$ such that $\ibar_{\jbar}\seq\ibar_{\jbar'}$ and $\ibar_{\kbar}\seq\ibar_{\kbar'}$. Then
$$
F(\ibar_{\jbar'})(t)=\sigma(t)=F(\ibar_{\kbar'}),
$$
and so, for some $0\leq l<r$, we have $\vphi_t(\bbar_{\jbar}),\vphi_t(\bbar_{\kbar})\in[\frac{l}{r},\frac{l+1}{r}]$. Therefore, it follows that $|\vphi_t(\bbar_{\jbar})-\vphi_t(\bbar_{\kbar})|\leq\frac{1}{r}\leq\epsilon_t$, as desired.
\end{proof}

\subsection{Forking and Dividing}

We work from the definitions of forking and dividing given in Section \ref{sec:MT}. The goal of this section is to prove Theorem \ref{FANDD} and complete the proof of Theorem \ref{ext reduct}. Given a finite partial type $\pi(x)$, we use the notation $\max\pi(x)$ to denote the formula $\max\{\vphi(x):``\vphi(x)=0"\in\pi(x)\}$. 

\begin{lemma}\label{divprop1}
Fix $C\subset\M$ and suppose $\pi(x,b)$ is a type. If $\pi(x,b)$ divides over $C$, then there are formulas $\vphi_1(x,b),\ldots,\vphi_m(x,b)$ such that $``\vphi_t(x,b)=0"\in\pi(x,b)$, for all $1\leq t\leq m$, and $\max\{\vphi_1(x,b),\ldots,\vphi_m(x,b)\}$ divides over $C$.
\end{lemma}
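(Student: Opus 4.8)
The plan is to turn the abstract dividing witness for $\pi(x,b)$ into a concrete finite piece of $\pi$ by a single compactness argument, and then transport the dividing data. First I would unwind the hypothesis: since $\pi(x,b)$ divides over $C$, fix a formula $\psi(x,\bbar)$ that divides over $C$ with $\pi(x,b)\models``\psi(x,\bbar)=0"$, together with the data furnished by the definition, namely a sequence $(\bbar^l)_{l<\omega}$ with $\bbar^l\equiv_C\bbar$, an $\epsilon>0$, and an integer $k>0$ such that $\{\psi(x,\bbar^l)\leq\epsilon:l<\omega\}$ is $k$-unsatisfiable. Since the parameters of $\pi$ and of $\psi$ need not coincide, I would amalgamate them, setting $\cbar=b\bbar$ and regarding each formula of $\pi$, as well as $\psi$, as a formula with parameter $\cbar$.

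The central step is one compactness argument producing the finite subtype with a strictly positive slack. Because $\pi\models``\psi=0"$, the set $\{``\vphi(x,\cbar)\leq\tfrac1n": ``\vphi=0"\in\pi,~n\geq 1\}\cup\{``\psi(x,\cbar)\geq\epsilon"\}$ is unsatisfiable, as any realization would satisfy $\pi$ while making $\psi\geq\epsilon$. By compactness some finite subset is unsatisfiable; collecting the finitely many formulas involved as $\vphi_1,\ldots,\vphi_m$ with $``\vphi_t=0"\in\pi$, and taking $\epsilon'>0$ below all thresholds $\tfrac1n$ that appear, I obtain that, writing $\theta=\max\{\vphi_1,\ldots,\vphi_m\}$, the set $\{``\theta(x,\cbar)\leq\epsilon'"\}\cup\{``\psi(x,\cbar)\geq\epsilon"\}$ is unsatisfiable; equivalently, for every $x$, $\theta(x,\cbar)\leq\epsilon'\Rightarrow\psi(x,\cbar)<\epsilon$. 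I want to stress that the strict positivity of $\epsilon'$ is exactly what is needed: the definition of dividing requires a strictly positive threshold, so settling merely for the implication $\theta=0\Rightarrow\psi<\epsilon$ would not suffice.

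Finally I would transport the dividing sequence. For each $l$, using $\bbar^l\equiv_C\bbar$ and homogeneity of $\M$, pick an automorphism fixing $C$ that sends $\bbar$ to $\bbar^l$, and let $\cbar^l$ be the image of $\cbar$; then $\cbar^l\equiv_C\cbar$, the $\bbar$-part of $\cbar^l$ is $\bbar^l$, and the $b$-part $b^l$ satisfies $b^l\equiv_C b$. Applying the automorphism to the invariant implication from the previous step yields $\theta(x,b^l)\leq\epsilon'\Rightarrow\psi(x,\bbar^l)<\epsilon$ for every $l$. Hence $\{\theta(x,b^l)\leq\epsilon':l<\omega\}$ is $k$-unsatisfiable: a common realization of any $k$ of these conditions would realize $k$ of the conditions $\psi(x,\bbar^l)\leq\epsilon$, contradicting the choice of $(\bbar^l)$. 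Since $\theta=\max\{\vphi_1,\ldots,\vphi_m\}$ has parameter $b$ and $b^l\equiv_C b$, this exhibits $\max\{\vphi_1(x,b),\ldots,\vphi_m(x,b)\}$ as dividing over $C$, with each $``\vphi_t(x,b)=0"\in\pi$ by construction.

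The main obstacle is the middle compactness step, where one must simultaneously pass to a finite subtype and extract a strictly positive slack $\epsilon'$, ensuring that \emph{approximate} satisfaction of the finite piece (not merely exact satisfaction) already forces $\psi$ below its dividing threshold. The bookkeeping of the two parameter tuples $b$ and $\bbar$ is a minor but necessary nuisance, handled by the amalgamation $\cbar=b\bbar$ together with homogeneity to carry the conjugate sequence along.
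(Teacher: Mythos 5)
Your proof is correct and takes essentially the same route as the paper's: extract a finite piece of $\pi$ together with a strictly positive slack $\epsilon'$ so that approximate satisfaction of its max forces $\psi$ below its dividing threshold, then conjugate the dividing sequence over $C$ to transport the $k$-unsatisfiability. The only (immaterial) difference is organizational — you fold the paper's two compactness steps (first $\pi_0\models``\psi\leq\tfrac{\epsilon}{2}"$, then extracting $\delta>0$) into a single compactness argument by including all approximations $``\vphi(x,\cbar)\leq\tfrac{1}{n}"$ at once.
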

\begin{proof}
Suppose $\pi(x,b)\models``\vphi(x,b')=0"$ and $\vphi(x,b')$ divides over $C$, witnessed by a sequence $(b'_i)_{i<\omega}$, an integer $k>0$, and some $\epsilon>0$. By compactness, there is a finite subset $\pi_0(x,b)\seq\pi(x,b)$ such that $\pi_0(x,b)\models``\vphi(x,b')\leq\frac{\epsilon}{2}"$. Let $\theta(x,b)=\max\pi_0(x,b)$. By compactness again, we may fix $\delta>0$ such that $``\theta(x,b)\leq\delta"\models``\vphi(x,b')\leq\epsilon"$. Given $i<\omega$, let $b_i$ be such that $bb'\equiv_C b_ib'_i$. Then $``\theta(x,b_i)\leq\delta"\models``\vphi(x,b'_i)\leq\epsilon"$. It follows that $\{\theta(x,b_i)\leq\delta:i<\omega\}$ is $k$-unsatisfiable. Altogether $\theta(x,b_i)$ divides over $C$.
\end{proof}

We can now prove Theorem \ref{FANDD}. 

\begin{proof}[Proof of Theorem \ref{FANDD}]
Part $(a)$: Fix a subset $C\subset\M$ and a partial type $\pi(x,b)$. Given a formula $\vphi(x,y)$, an integer $k>0$, and some $\epsilon>0$, let 
$$
\vphi^\epsilon_k(y_1,\ldots,y_k):=\sup_x\min_{1\leq j\leq k}(\epsilon\dotminus\vphi(x,y_j)).
$$

Suppose $\pi(x,b)$ divides over $C\subset\M$. Then, by Lemma \ref{divprop1}$(a)$, there is a finite subset $\pi_0(x,b)$ such that, if $\vphi(x,b)=\max\pi_0(x,b)$, then $\vphi(x,b)$ divides over $C$, say witnessed by a sequence $(b_i)_{i<\omega}$, an integer $k>0$, and some $\epsilon>0$. Then $\{\vphi(x,b_i)\leq\epsilon:i<\omega\}$ is $k$-unsatisfiable, and so $``\vphi^\epsilon_k(\ybar)=0"\in\EM((b_i)_{i<\omega}/C)$. Note also, for any $\cL_C$-formula $\psi(y)$, if $\psi(b)=0$ then $``\psi(y_1)=0"\in\EM((b_i)_{i<\omega}/C)$. By Theorem \ref{EM}, we may replace $(b_i)_{i<\omega}$ with a $C$-indiscernible sequence realizing $\EM((b_i)_{i<\omega}/C)$. By our observations, we still have $b_i\equiv_C b$ for all $i<\omega$, and so, after conjugating by an automorphism of $\M$, we may assume $b_0=b$. Moreover, since $``\vphi^\epsilon_k(\ybar)=0"\in\EM((b_i)_{i<\omega}/C)$, it follows that  $\{\vphi(x,b_i)=0:i<\omega\}$ is unsatisfiable, and so $\bigcup_{i<\omega}\pi(x,b_i)$ is unsatisfiable.

Conversely, suppose there is a $C$-indiscernible sequence $(b_i)_{i<\omega}$, with $b_0=b$, such that $\bigcup_{i<\omega}\pi(x,b_i)$ is unsatisfiable. By compactness, there is a finite subset $\pi_0(x,b)\seq\pi(x,b)$, indices $i_1<\ldots<i_k$, and some $\epsilon>0$ such that, if $\vphi(x,b)=\max\pi_0(x,b)$ then $\{\vphi(x,b_{i_1})\leq\epsilon,\ldots,\vphi(x,b_{i_k})\leq\epsilon\}$ is unsatisfiable. Then $\vphi^\epsilon_k(b_{i_1},\ldots,b_{i_k})=0$, and so $\vphi^\epsilon_k(b_{j_1},\ldots,b_{j_k})=0$ for all $j_1<\ldots<j_k<\omega$ by indiscernibility. It follows that $\{\vphi(x,b_i)\leq\frac{\epsilon}{2}:i<\omega\}$ is $k$-unsatisfiable. Altogether, $\vphi(x,b)$ divides over $C$ and $\pi(x,b)\models``\vphi(x,b)=0"$, as desired. 

Part $(b)$: Fix a subset $C\subset\M$ and a partial type $\pi(x,b)$. The forward direction follows exactly as in discrete logic, so we prove the converse. Suppose there is some $D\supseteq Cb$ such that any extension of $\pi(x,b)$ to a complete type over $D$ divides over $C$. Let $\Sigma$ be the collection of $\cL_D$-formulas $\theta(x)$, which divide over $C$. For each $\theta\in\Sigma$, we use Lemma \ref{divprop2} to fix $\epsilon_\theta>0$ such that $\theta(x)\dotminus\epsilon_\theta$ divides over $C$. Define $p_0(x)=\pi(x,b)\cup\{\theta(x)\geq\epsilon_\theta:\theta\in\Sigma\}$, and suppose, towards a contradiction, that $p_0(x)$ is satisfiable. Then we can extend $p(x)$ to a complete type $p(x)$ over $D$, which divides over $C$ by assumption. By Lemma \ref{divprop1}, there is some $\cL_D$-formula $\theta(x)$ such that $``\theta(x)=0"\in p(x)$ and $\theta(x)$ divides over $C$. But then $``\theta(x)\geq\epsilon_\theta"\in p_0(x)$, which is a contradiction. Therefore $p_0(x)$ is inconsistent. By compactness, there are $\theta_1,\ldots,\theta_m\in\Sigma$ such that, if $\psi_t(x):=\theta_t(x)\dotminus\epsilon_{\theta_t}$, then
\begin{equation*}
\pi(x,b)\models``\min\{\psi_1(x),\ldots,\psi_m(x)\}=0"\qedhere
\end{equation*}
\end{proof}

Our final goal is to complete the proof of Theorem \ref{ext reduct}, which can now be done exactly as in the discrete case. Recall that a ternary relation $\ind$ satisfies \textit{extension} if for all $A,B,C,D\subset\M$, if $A\ind_C B$ and $BC\seq D$, then there is some $A'\equiv_{BC}A$ such that $A'\ind_C D$.

\begin{exercise}\label{forklem}$~$
\begin{enumerate}[$(a)$]
\item Using the characterization of forking given by Theorem \ref{FANDD}$(b)$, follow the proof of \cite[Lemma 3.1]{Adgeo} to show $\ind^f$ satisfies extension in any continuous theory.
\item Alternatively, use the original ``syntactic" definition of forking to prove the following continuous analog of a classical exercise: if $C\subset\M$ and $M\supseteq C$ is $\chi(C)^+$-saturated, then forking and dividing over $C$ coincide for complete types over $M$. Then use Theorem \ref{FANDD}$(b)$ to conclude that $\ind^f$ satisfies extension.
\item Use the fact that $\ind^f$ satisfies extension to complete the proof of Theorem \ref{ext reduct}. In particular, prove that $\ind^d$ coincides with $\ind^f$ if and only if $\ind^d$ satisfies extension.
\end{enumerate}
\end{exercise}

\bibliography{D:/Gabe/UIC/gconant/Math/BibTeX/biblio}

\providecommand{\bysame}{\leavevmode\hbox to3em{\hrulefill}\thinspace}
\providecommand{\MR}{\relax\ifhmode\unskip\space\fi MR }
\providecommand{\MRhref}[2]{%
  \href{http://www.ams.org/mathscinet-getitem?mr=#1}{#2}
}
\providecommand{\href}[2]{#2}
\begin{thebibliography}{10}

\bibitem{Adgeo}
Hans Adler, \emph{A geometric introduction to forking and thorn-forking}, J.
  Math. Log. \textbf{9} (2009), no.~1, 1--20. \MR{2665779 (2011i:03026)}

\bibitem{BYrandom}
Ita{\"{\i}} Ben~Yaacov, \emph{On theories of random variables}, Israel J. Math.
  \textbf{194} (2013), no.~2, 957--1012. \MR{3047098}

\bibitem{BBHU}
Ita{\"{\i}} Ben~Yaacov, Alexander Berenstein, C.~Ward Henson, and Alexander
  Usvyatsov, \emph{Model theory for metric structures}, Model theory with
  applications to algebra and analysis. {V}ol. 2, London Math. Soc. Lecture
  Note Ser., vol. 350, Cambridge Univ. Press, Cambridge, 2008, pp.~315--427.
  \MR{2436146 (2009j:03061)}

\bibitem{CaWa}
Enrique Casanovas and Frank~O. Wagner, \emph{The free roots of the complete
  graph}, Proc. Amer. Math. Soc. \textbf{132} (2004), no.~5, 1543--1548
  (electronic). \MR{2053363 (2005f:03046)}

\bibitem{ChKa}
Artem Chernikov and Itay Kaplan, \emph{Forking and dividing in {${\rm NTP}\sb
  2$} theories}, J. Symbolic Logic \textbf{77} (2012), no.~1, 1--20.
  \MR{2951626}

\bibitem{Co13}
Gabriel Conant, \emph{Forking and dividing in {H}enson graphs}, arXiv:1401.1570
  [math.LO], 2014, submitted.

\bibitem{EaGo}
Clifton Ealy and Isaac Goldbring, \emph{Thorn-{F}orking in continuous logic},
  J. Symbolic Logic \textbf{77} (2012), no.~1, 63--93. \MR{2951630}

\bibitem{GoUS}
Isaac Goldbring, \emph{Definable functions in {U}rysohn's metric space},
  Illinois J. Math. \textbf{55} (2011), no.~4, 1423--1435 (2013). \MR{3082876}

\bibitem{MeUS}
Julien Melleray, \emph{Some geometric and dynamical properties of the {U}rysohn
  space}, Topology Appl. \textbf{155} (2008), no.~14, 1531--1560. \MR{2435148
  (2009k:54054)}

\bibitem{Sh500}
Saharon Shelah, \emph{Toward classifying unstable theories}, Ann. Pure Appl.
  Logic \textbf{80} (1996), no.~3, 229--255. \MR{1402297 (97e:03052)}

\bibitem{TeZi}
Katrin Tent and Martin Ziegler, \emph{A course in model theory}, Lecture Notes
  in Logic, vol.~40, Association for Symbolic Logic, La Jolla, CA, 2012.
  \MR{2908005}

\bibitem{TeZibdd}
\bysame, \emph{The isometry group of the bounded {U}rysohn space is simple},
  Bull. Lond. Math. Soc. \textbf{45} (2013), no.~5, 1026--1030. \MR{3104993}

\bibitem{TeZiSIR}
\bysame, \emph{On the isometry group of the {U}rysohn space}, J. Lond. Math.
  Soc. (2) \textbf{87} (2013), no.~1, 289--303. \MR{3022717}

\bibitem{Ury}
Paul Urysohn, \emph{Sur un espace m{\'e}trique universel}, C. R. Acad. Sci.
  Paris \textbf{180} (1925), 803--806.

\bibitem{Usvy}
Alexander Usvyatsov, \emph{Generic separable metric structures}, Topology Appl.
  \textbf{155} (2008), no.~14, 1607--1617. \MR{2435152 (2009g:54063)}

\end{thebibliography}
\bibliographystyle{amsplain}

\end{document}